\let\mathg\mathfrak
\theoremstyle{plain}
\newtheorem{cor}{Corollary}[section]
\newtheorem{lem}[cor]{Lemma}
\newtheorem{thm}[cor]{Theorem}
\newtheoremstyle{thmstylenn}
{15pt}
{5pt}
{\it}
{}
{\bf}
{ \ref{lem:sp2rep}.}
{ }
{}
\theoremstyle{thmstylenn}
\theoremstyle{definition}
\newtheorem{exa}[cor]{Example}
\newtheorem{NB}[cor]{Remark}
\newtheorem{dfn}[cor]{Definition}
\newcommand{\Kommentar}[1]{}
\newcommand{\bdm}{\begin{displaymath}}
\newcommand{\edm}{\end{displaymath}}
\newcommand{\be}{\begin{equation}}
\newcommand{\ee}{\end{equation}}
\newcommand{\ba}[1]{\begin{array}{#1}}
\newcommand{\ea}{\end{array}}
\newcommand{\bea}[1][]{\begin{eqnarray#1}}
\newcommand{\eea}[1][]{\end{eqnarray#1}}
\newcommand{\btab}{\begin{tabular}}
\newcommand{\etab}{\end{tabular}}
\newcommand{\x}{\times}
\newcommand{\ra}{\rightarrow}
\newcommand{\Id}{\ensuremath{\mathrm{Id}}}
\newcommand{\del}{\partial}
\newcommand{\C}{\ensuremath{\mathbb{C}}}
\newcommand{\R}{\ensuremath{\mathbb{R}}}
\newcommand{\Scal}{\ensuremath{\mathrm{Scal}}}
\newcommand{\Cl}{\ensuremath{\mathcal{C}}}
\newcommand{\SL}{\ensuremath{\mathrm{SL}}}
\newcommand{\SU}{\ensuremath{\mathrm{SU}}}
\newcommand{\U}{\ensuremath{\mathrm{U}}}
\newcommand{\Sympl}{\ensuremath{\mathrm{Sp}}}
\newcommand{\so}{\ensuremath{\mathg{so}}}
\renewcommand{\sl}{\ensuremath{\mathg{sl}}}
\newcommand{\SO}{\ensuremath{\mathrm{SO}}}
\newcommand{\Sp}{\ensuremath{\mathrm{Sp}}}
\newcommand{\Gl}{\ensuremath{\mathrm{Gl}}}
\newcommand{\Spin}{\ensuremath{\mathrm{Spin}}}
\newcommand{\spin}{\ensuremath{\mathg{spin}}}
\newcommand{\g}{\ensuremath{\mathfrak{g}}}
\begin{document}
\def\haken{\mathbin{\hbox to 6pt{%
                 \vrule height0.4pt width5pt depth0pt
                 \kern-.4pt
                 \vrule height6pt width0.4pt depth0pt\hss}}}
    \let \hook\intprod
\setcounter{equation}{0}
%
%
\thispagestyle{empty}
%
\date{\today . MSC 2010:  53 C 25-29; 58 J 50; 58 J 60}
\title[Cones of $G$ manifolds and  Killing spinors with skew torsion
]{
Cones of $G$ manifolds and  Killing spinors with skew torsion}
%
%
%
\author{Ilka Agricola}
\author{Jos H\"oll}
\address{\hspace{-5mm} 
Ilka Agricola, Jos H\"oll \newline
Fachbereich Mathematik und Informatik \newline
Philipps-Universit\"at Marburg\newline
Hans-Meerwein-Strasse \newline
D-35032 Marburg, Germany\newline
{\normalfont\ttfamily agricola@mathematik.uni-marburg.de}\newline
{\normalfont\ttfamily hoellj@mathematik.uni-marburg.de}}

%
%
\begin{abstract}
This paper is devoted to the  systematic investigation
of the cone construction for Riemannian 
$G$ manifolds $M$, endowed with an invariant metric
connection with skew torsion $\nabla^c$, a `characteristic connection'. 
We  show how to define a $\bar G$ structure
on the cone $\bar M=M\x \R^+$ with a cone metric, and we 
prove that a Killing spinor with torsion on $M$ induces a  
spinor on $\bar M$ that is parallel w.\,r.\,t.~the characteristic connection
of the $\bar G$ structure. We establish the explicit
correspondence between classes of metric almost contact structures
on $M$ and almost hermitian classes on $\bar M$, resp. between classes
of $G_2$ structures on $M$ and $\Spin(7)$ structures on $\bar M$.   
Examples illustrate how this `cone correspondence
with torsion' works in practice.
\end{abstract}
\maketitle
\pagestyle{headings}
%
%
%
\section{Preliminaries}\noindent
%
\subsection{Introduction}
Given a complete Riemannian spin manifold $(M,g)$, the two most basic equations
that a spinor field $\psi$ can fulfill are the parallelism equation and the
Killing equation, 
\bdm
\nabla^g\psi\ =\ 0,\qquad 
\nabla^g_X\psi\ =\ \mu X\cdot\psi \text{  for some } \mu \in\R-\{0\},
\edm
where $\nabla^g$ denotes the Levi-Civita connection. Berger's holonomy theorem
yielded that the Ricci-flat manifolds with reduced Riemannian
holonomy $\SU(n),\ \Sp(n),\ G_2,$ or $\Spin(7)$ were candidates for manifolds
with parallel spinors, and indeed Wang proved in 1989 that these are
the only manifolds admitting parallel spinors, and determined the dimension
of the space of parallel spinors \cite{W89}. The geometric meaning of
the Killing equation stems from the fact that Riemannian Killing spinors
realize the equality case in Friedrich's seminal estimate of the first
eigenvalue of the Riemannian Dirac operator on  compact Riemannian manifolds 
of positive  curvature \cite{Friedrich80}. Independently, the Killing equation
was investigated in theoretical physics for supergravity theories in
dimensions $10$ and $11$ \cite{Duff84} and certain applications in
general relativity \cite{Penrose83}.
The first non-trivial compact examples of Riemannian manifolds with Killing 
spinors were found in dimensions $5\leq n\leq 7$ in 1980-1986 
(\cite{Friedrich80}, \cite{Friedrich&G85}, \cite{Duff86}). The link to 
non-integrable geometry and $G$ structures was established shortly after; 
for instance, a compact, connected and simply connected $6$-dimensional 
Hermitian manifold is nearly K\"ahler if and only if it admits 
a Riemannian Killing spinor \cite{Grunewald90}.
Similar results hold for Einstein-Sasaki structures in all odd dimension and
nearly parallel $G_2$-manifolds in dimension $7$ (\cite{FK89}, \cite{FK}).

The  connection between these two spinorial field equations was recognized
by Bryant in 1987, who proved that the cone over the nearly K\"ahler
manifold $\SU(3)/T^2$ was an integrable $G_2$ manifold, and that
the cone over the nearly parallel $G_2$ manifold $\SO(5)/\SO(3)$ was
an integrable $\Spin(7)$ manifold \cite{Bryant87}. 
B\"ar generalized this idea in 1993, 
he proved that the cone $(\bar M=M\x \R^+,  \bar g= r^2 g+dr^2)$ of a (compact)
Riemannian spin manifold $(M,g)$ with Riemannian Killing spinors is a (non-compact)
Ricci-flat Riemannian spin manifold with $\nabla^{\bar g}$-parallel spinors.
We will loosely call this phenomenon the \emph{cone correspondence}.
By combining this cone correspondence with Wang's classification result, one
obtains a complete overview about all geometries that can carry Riemannian 
Killing spinors: together with results by Hijazi \cite{Hijazi86}, the general picture
is basically that the non-integrable geometries listed above are, beside spheres, the 
\emph{only} possible ones. A great deal of effort has been invested in the
actual construction of such  non-integrable geometries. But while
there is a rich supply of  non-homogeneous Einstein-Sasaki
manifolds (see \cite{GMSW04}, \cite{BG}, and many others) 
and nearly parallel $G_2$ manifolds, 
compact nearly K\"ahler manifolds
have resisted so far all construction efforts in the non-homogeneous case,
though they are generally believed to exist.

Since then, there has been a lot of progress on $G$ structures on Riemannian
manifolds and general holonomy theory. Einstein-Sasaki manifolds,  
nearly K\"ahler $6$-manifolds, and
nearly parallel $G_2$-manifolds are only special instances of more general
Riemannian manifolds with structure group $\U(n)$, $\SU(3)$, or $G_2$. These can be
neatly divided in different classes, first through the study of their
characterizing differential equations (\cite{CG90}, \cite{CM92}, \cite{FG82}, 
\cite{F85}, \cite{GH80}), later by more general concepts like intrinsic torsion
(\cite{Salamon89}, \cite{Swann00}) and, closely related,  characteristic 
connections -- these are, by definition, invariant metric connections
with skew torsion (\cite{Fri2}, \cite{A06}). The integrable geometries covered by
Berger's theorem correspond to the  `trivial' class (though, of course, they
are highly non trivial objects). Many examples of different classes 
were constructed and
their special properties investigated in the last decades. 
As a common feature,
a certain, well-understood subclass of every possible $G$ structure
admits a unique $G$ invariant metric connection with skew torsion,
the \emph{characteristic connection} $\nabla^c$, and it induces 
(with a $1/3$ rescaling)
a characteristic Dirac operator that generalizes the Dolbeault operator on
Hermitian manifolds  and Kostant's `cubic' Dirac operator on naturally
reductive  homogeneous spaces (\cite{Bismut}, \cite{Agricola&F04a}, \cite{ABK12}). 

Again, a big incentive to study $G$ manifolds admitting a characteristic
connection came from theoretical physics, more precisely from superstring
 theory, where the characteristic torsion (by definition, it is a $3$-form
on the manifold) is interpreted as a higher order flux  (see
\cite{Strominger86}, \cite{Green&S&W87} for the first publications on the
topic; for more details, we refer to the vast literature on string 
compactifications). Spinor fields satisfying a generalized kind of
Killing\,/\,parallelism equation with torsion (the precise equation depends on
the model) are identified with supersymmetry
transformations. More recently, connections with skew torsion and their
 Dirac operators are also considered for the spectral action principle
and hypothetical applications in cosmic topology \cite{Lai&T12}.

It is well-known that the characteristic connection $\nabla^c$ can admit
a parallel spinor field in more situations than for the Levi-Civita connection 
$\nabla^g$, and that an analogue of Wang's classification result is not
possible. For example, any $G_2$ structure and any $\Spin(7)$ structure
admitting a characteristic connection $\nabla^c$ has a $\nabla^c$-parallel
spinor field, just because  $G_2$ and $\Spin(7)$ are the stabilizers of a
generic spinor in dimension $7$ and $8$, respectively (\cite{FI02}, \cite{I04}).
More recently, the twistor and Killing equations for the characteristic
connection were investigated in \cite{ABK12} and \cite{BB12}; we will speak of
\emph{Killing spinors with torsion} to distinguish them from the Riemannian
case.  Again, the picture is roughly as follows: there are more $G$ manifolds 
admitting Killing spinors with torsion  than in the Riemannian case, and their
geometry is less rigid (for example, they do not have to be Einstein, and
the Killing number is not automatically linked to the first eigenvalue of
the characteristic Dirac operator). This richness in turn implies that
a classification is not possible. One further crucial difference to the
Riemannian case is that the families of manifolds admitting parallel spinors
resp.~Killing spinors with torsion are not disjoint any more, both are
described in the language of $G$ structures sketched above and it is to be
discussed in every situation anew what can be said about particular spinor fields.
\subsection{Outline}
The main purpose of the present paper is to investigate the cone correspondence
for $G$ manifolds admitting a characteristic connection. While doing so,
several results are obtained that should be of interest in other
circumstances as well.

Section $2$ is devoted to the general construction. Given a Riemannian
manifold $(M,g)$ we denote by $(\bar{M},\bar{g})=(M\times\R^+,\,a^2r^{2}g+dr^2)$ for 
some fixed $a>0$ its cone (we sometimes call $a$ the \emph{cone constant} of
$\bar M$). Of course, the cone does always exist and carries
interesting geometric structures, but if one intends to lift a Killing spinor
with torsion from $M$ to $\bar M$, one has to choose $a$ suitably, depending
on the Killing number $\alpha$. It is crucial that $\alpha$ is not allowed
to vanish, i.\,e.~there is no cone correspondence for parallel spinor fields
(but see Corollary \ref{cor:g2explkilling} for an exception). 
The details of this `abstract' cone correspondence for most
general metric connections with skew torsion are explained  in Section 2.1.
Section 2.2 introduces the $G$ structures that will be of particular
interest in this article and their characteristic connections.
For  metric almost contact structures, we prove a new criterion for the
existence of a characteristic connection (Lemma
\ref{lem.contact-char-connection}) and describe the corresponding
Chinea-Gonzales  classes. For almost hermitian
structures,  $G_2$ structures, and $\Spin(7)$ structures, we quickly recall
about their characteristic connections a few facts that  we shall need later. 

In Section 2.3 we begin to sketch the details of the cone correspondence.
Suppose $M$ carries a $G$ structure with characteristic connection $\nabla^c$,
and that  we can define  a $\bar G$ structure on the cone $(\bar M, \bar
g)$ with characteristic connection $\bar\nabla^c$. 
\emph{Then an important obervation is that the lift of $\nabla^c$ to $\bar M$ is
not the characteristic connection $\bar\nabla^c$ of the  $\bar G$ 
structure on $\bar M$!} This happens already in the classical case covered
by B\"ar, where the characteristic connection on $M$ is not $\nabla^g$, while
the $\bar G$ structure on $\bar M$ is integrable, hence its characteristic
connection is equal to the Levi-Civita connection.
Rather, we  need as an intermediate step another  
connection $\nabla$ on $M$ with torsion $T$ such that its lift 
$\bar\nabla$ to $\bar M$ with 
torsion $\bar T$ is the characteristic connection on $\bar M$ with respect to 
the given $\bar G$ structure. The torsion $T$ measures in some sense the
deviation of the $\bar G$ structure from the integrable case, i.\,e.~the classical
cone correspondence describes the situations where $T=0$, hence $\bar T=0$ and
$\bar\nabla=\nabla^{\bar g}$. Lemma \ref{lem:corrksps} describes the
exact correspondence between Killing spinors with torsion on $M$
and $\bar\nabla$-parallel spinors on $\bar M$.

We  then describe in detail the cone correspondence with torsion for
two particular situations where $M$ is odd-dimensional.
Section 3 treats the case when $M$ is a metric almost contact manifold.
We construct an almost hermitian structure on $\bar M$, describe explicitly
the intermediate connection $\nabla$ and prove that its lift is the
characteristic connection of the almost hermitian structure. We then establish
the correspondence between the different classes of structures on $M$ and
$\bar M$, first through equations (Theorem \ref{thm:ocsclass}) and then in
terms of the different classes (\ref{thm:acsvsacs}). These results
synthesize several approaches to the definition of (some) metric almost
contact structures through the almost hermitian structures that they induce
on the cone (\cite{O62}, \cite{O85}); for normal structures ($N=0$), 
the correspondence was proved
independently in the recent preprints \cite{HTY12} and \cite{Conti&M12}
(see Remark \ref{NB.others} for details). 
In Section 3.3, the 
spinor correspondence is described in detail. In \cite{BB12}, it was
proved that the  Tanno deformation of a $(2n+1)$-dimensional 
Einstein-Sasaki manifold and that the $5$-dimensional Heisenberg group carry Killing
spinors with torsion. As an application, we prove in Section 3.4 
that these spinors lift
to spinors on the cone (it turns out to be conformally K\"ahler) 
that are parallel with respect to its characteristic  connection. 
Section 3.5 specializes the previous results to metric almost contact $3$-structures.

Section $4$ is devoted to the case when $M$ is a $G_2$ manifold. We construct
a $\Spin(7)$ structure on its cone, describe explicitly
the intermediate connection $\nabla$ and prove again that its lift is the
characteristic connection of the $\Spin(7)$ structure. In 4.2 we establish
the explicit correspondence between the different classes on $M$ and $\bar M$
(Lemma \ref{lem:spin7str} and Theorem \ref{thm.whenU1}); 
the results are slightly simpler than in the contact case, 
because the number of classes is smaller.
In 4.3 we establish again the details of the spinor correspondence. 
In Corollary \ref{cor:g2explkilling}, we prove by a clever interpretation of
the involved equations that the $\nabla^c$-parallel spinor defining the $G_2$
structure on $M$ lifts to a parallel spinor for the characteristic
connection of the $\Spin(7)$ structure on $\bar M$ -- thus, the spinor
correspondence turns out to be as neat as one could expect, and the use
of the intermediate connection $\nabla$ is not a draw back at all of the construction.

We end this outline with some words about the cone for even-dimensional
manifolds $M$. The most interesting case would be the lift from an
almost hermitian structure on $M$ to a $G_2$ structure on $\bar M$. As described
in several recent publications (\cite{Hitchin00}, \cite{Chiossi&S02},
\cite{Stock09}), 
the construction of a $G_2$ structure requires the use of Hitchin's flow
methods, and it is not very transparent how this could be generalized to cones
without having to solve a differential equation in the process. Thus, we
reserve such thoughts to a separate, upcoming publication. 
\subsection{Acknowledgements}
Both authors thank Thomas Friedrich (Berlin) for his steady
mathematical interaction.
Ilka Agricola acknowledges financial support by the
DFG within the priority programme 1388 "Representation theory".
Jos H\"oll thanks Philipps-Universit\"at Marburg for a Ph.\,D.~grant.
He is member of the `Graduate Center for Life and Natural Sciences'
of Philipps-Universit\"at Marburg and of the special graduate programme
`Lie theory and complex geometry' of the Department of Mathematics and
Computer Science.

%
\section{The general construction}
%
\subsection{The cone construction}\label{ch:constrcone}
%
Consider a Riemannian spin manifold $(M,g)$ equipped with a metric connection 
$\nabla$ with skew symmetric torsion $T$ and connection form $\omega$. We are 
interested in real Killing spinors with respect to the given connection, 
$\nabla_X\psi=\alpha X\psi$ with $\alpha\in\R\backslash\{0\}$. 
The aim of this Section is to generalize B\"ar's cone construction  \cite{B93} 
for Riemannian Killing spinors, i.\,e.~the case when $\nabla=\nabla^g$.
As an intermediate tool,
we define a connection $\tilde{\nabla}$ on the spinor bundle by 
\bdm
\tilde{\nabla}_X\psi=\nabla_X\psi+ \alpha X\cdot\psi, \mbox{ with } 
\alpha\in\R\backslash\{0\}. 
\edm
Denote by $\Cl(\R^n)$ the Clifford algebra of $\R^n$ with respect to the
standard negative definite euclidian scalar product, and by $\Delta_n$ the
spin module of $\Spin(n)$. We consider the Clifford multiplication for 
$X\in \R^n\subset \Cl(\R^n)$ in $\Delta_n$. It is the action of an element of 
$\R^n\subset\spin(n)\oplus\R^n=\spin(n+1) \subset \Cl(\R^n)$ in $\Delta_n$.
Let $P_{\SO(n)}M$ be the $\SO(n)$-principal bundle of frames,
$\Sigma M$  the spinor bundle  and $\rho_n:\Cl(n)\ra
GL(\Delta_n)$  the representation of the Clifford algebra, 
i.\,e.~$\rho_{*|\spin(n)}$ is the $\spin(n)$ representation. 
 Let  $P_{\Spin(n)}M$ be the $\Spin(n)$-principal bundle.
For a local section $h$ in $P_{\SO(n)}M$, we identify $TM$ and
$P_{\SO(n)}M\times_{SO(n)}\R^n$ via $X=[h,\eta(dh(X))]$, where $\eta$ is
the solder form. The affine connection $\tilde{\nabla}$ induces
 a connection in the $\Spin(n+1)$-principal bundle  
$P_{\Spin(n)}M\times_{\Spin(n)}\Spin(n+1)$ as follows. Let 
\bdm
\Phi:\ P_{\Spin(n)}M\ra P_{\SO(n)},\quad \theta:\ \Spin(n)\ra \SO(n)
\edm
be the usual 
projections. We look at $\spin(n+1)\cong\spin(n)\oplus\R^n\subset \Cl(n)$, the
restriction of $\rho_{*}$ to $\spin(n+1)$, and obtain for a local section $k$ in
$P_{\Spin(n)}M$ with $\Phi(k)=h$ and $\Sigma M\ni\psi=[k,\sigma]$, 
\begin{align*}
\tilde{\nabla}_X[k,\sigma]&=\nabla_X[k,\sigma]+\alpha\cdot[h,\eta(dhX)]
\cdot[k,\sigma]\\
&=[k,d\sigma(X)+\rho_*(\theta^{-1}_*(\omega(dhX))+\alpha\eta(dhX)))\sigma].
\end{align*}
Thus we get the $\spin(n+1)$-valued $1$-form 
$\hat{\omega}:=\Phi^*(\theta^{-1}_*\omega+\alpha\eta)$ on $P_{\Spin(n)}M$. We 
extend $\hat{\omega}$ to $P_{\Spin(n+1)}M$ as follows:
For $b\in P_{\Spin(n)}M$ we have $T_bP_{\Spin(n+1)}M=T_bP_{\Spin(n)}M\oplus
dL_b(\R^n)$, where $L_b:\Spin(n+1)\ra P_{\Spin(n+1)}M,~g\mapsto b\cdot g$ and define
\bdm
\hat{\omega}(dL_bY):=Y\in\R^n\subset\spin(n+1).
\edm
For any $b\in P_{\Spin(n)}M$ we further extend $\hat{\omega}$ in a 
$\Spin(n+1)$ equivariant way.
One checks that the given form is a connection form. It is the connection form 
of the connection given by $\tilde{\nabla}$.
As in \cite{B93}, we consider the $\SO(n+1)$-principal bundle 
\bdm
P_{\SO(n+1)}M\ :=\ P_{\SO(n)}M\times_{\SO(n)}\SO(n+1)
\edm
and calculate the corresponding 
connection form $\tilde{\omega}$ given by $\theta^{-1}_*\Phi^*\tilde{\omega}
=\hat{\omega}$ for the projections $\Phi:P_{\Spin(n+1)}M\ra P_{\SO(n+1)}M$ 
and $\theta:\Spin(n+1)\ra \SO(n+1)$ and get
\bdm
\tilde{\omega}=\begin{bmatrix}\omega & -2\alpha\eta \\ 
2\alpha\eta^t & 0 \end{bmatrix}.
\edm
%
%
%
%
%
We now consider the cone  $(\bar{M},\bar{g})=(M\times\R^+,a^2r^{2}g+dr^2)$ for 
some fixed $a>0$ with principal $\SO(n)$-bundle of frames
$P_{\SO(n+1)}\bar{M}$,  Levi-Civita connection  $\bar{\nabla}^{\bar{g}}$
with  connection form $\bar{\omega}^{\bar{g}}$ and projection 
$\pi:\bar{M}\ra M$. For simplicity, we will write $X\in TM$ for a lift to 
$\bar M$ of a vector field on $M$.
We define a tensor $\bar{T}$ on $\bar{M}$ from the torsion tensor $T$ of
$\nabla$ via 
\bdm
\bar{T}(X,Y)\ :=\ T(X,Y) \ \text{ for }X,Y\perp \del_r,\quad
\del_r\lrcorner \bar{T}\ =\ 0. 
\edm
Looking at the corresponding skew symmetric 
$3$-tensors and the metrics $g,\bar g$ on $M$ and $\bar M$, we have 
$a^2r^2T(X,Y,Z)=\bar T(X,Y,Z)$ for $X,Y,Z\perp \del_r$. From $\bar T$, we define
on $\bar M$ the connection 
\bdm
\bar{\nabla}\ :=\ \nabla^{\bar{g}}+\frac{1}{2}\bar{T},
\edm
whose connection form is $\bar{\omega}$.
For $p\in M$ and $s\in \R^+$, the tangent bundle of $\bar{M}$ splits into 
$T_{(p,s)}\bar{M}=T_pM\oplus \R$, where $d\pi(T\bar{M})=TM$. Thus, for 
$X\in TM\subset T\bar{M}$, we will write $"X"$ instead of $"d\pi X"$.
With a local orthonormal frame $(X_1,\ldots, X_n)$ of $M$ we have an 
isomorphism
of the last two vector bundles given by ($Y\in\R^{n+1}$)
\bdm
\phi:\  \pi^*(\tilde{P}_{\SO(n+1)}M)\times_{\SO(n+1)}\R^{n+1} 
\ra T\bar{M}, \ ~ [(X_1, .. ,X_n,\del_r),Y] \mapsto 
[(\frac{1}{ar}X_1, .. ,\frac{1}{ar}X_n,\del_r),Y].
\edm
Thus we can view the connection $\bar{\omega}$ as a 
connection of $\pi^*(\tilde{P}_{\SO(n+1)}M)$, which we again call
$\bar{\omega}$.

We summarize the different principal bundles with corresponding connections
and vector bundles  in the following table:
\begin{center}
\begin{tabular}{c|c|c|c}
bundle&connection form & vector bundle & manifold\\
\hline
$P_{\SO(n)}M$ & $\omega$ & $TM$ & $M$\\
$\tilde{P}_{\SO(n+1)}M$ & $\tilde{\omega}$ &&$M$\\
$\pi^*(\tilde{P}_{\SO(n+1)}M)$ &$\pi^*\tilde\omega$ 
&$\pi^*(\tilde{P}_{\SO(n+1)}M)\times_{\SO(n+1)}\R^{n+1}$ &$\bar M$\\
$P_{\SO(n+1)}\bar{M}$ & $\bar{\omega}$ & $T\bar{M}$ & $\bar M$
\end{tabular}
\end{center}

\smallskip

 To determine $\bar{\omega}$ for a local frame 
$h:=(X_1, .. ,X_n,\del_r)$ in $\pi^*(\tilde{P}_{\SO(n+1)}M)$, $X\in T\bar{M}$, 
we need to compute
($Y\in \pi^*(\tilde{P}_{\SO(n+1)}M)\times_{\SO(n+1)}\R^{n+1}$)
\bdm
\phi^{-1}(\bar{\nabla}_X\phi(Y))\ =\ [h,d(\eta(dhY))(X)+\bar{\omega}(dhX)\eta(dhY)].
\edm
Let $\tilde{h}:=(\frac{1}{ar}X_1, .. ,\frac{1}{ar}X_n,\del_r)$ be a local 
frame in $P_{\SO(n+1)}$. For $Y\in TM\subset \pi^*(\tilde{P}_{\SO(n+1)}M)
\times_{\SO(n+1)}\R^{n+1}$ we locally have $Y=[h,(Y_1,..,Y_n,0)^t]$ for 
functions $Y_i:M\ra \R$ and thus $\phi(Y)=[\tilde{h},(Y_1,..,Y_n,0)^t]$. 
Therefore $ar\phi(Y)$ is independent of $r$ and thus a lift of a vector 
field on $M$.
Using the  O'Neill formulas \cite[p. 206]{O83}, we compute for lifts $X,Y$ of 
vector fields in $TM$ and the Levi-Civita connection 
$\bar{\nabla}^{\bar{g}}$ of $\bar{M}$
\bdm
\bar{\nabla}^{\bar{g}}_{\del_r}\del_r=0, \quad
\bar{\nabla}^{\bar{g}}_{\del_r}X=\bar{\nabla}^{\bar{g}}_X\del_r=\frac{1}{r}X,
\quad
\bar{\nabla}^{\bar{g}}_XY=\nabla^g_XY-\frac{1}{r}\bar{g}(X,Y)\del_r.
\edm
Adding the torsion tensor $\bar{T}$, this implies
\bdm
\bar{\nabla}_{\del_r}\del_r=0,\quad
\bar{\nabla}_{\del_r}X=\bar{\nabla}_X\del_r=\frac{1}{r}X,\quad
\bar{\nabla}_XY=\nabla_XY-\frac{1}{r}\bar{g}(X,Y)\del_r.
\edm
For $X\in T\bar{M}$ and $Y\in TM\subset\pi^*(\tilde{P}_{\SO(n+1)}M)
\times_{\SO(n+1)}\R^{n+1}$ we have
\bdm
\phi^{-1}(\bar{\nabla}_{\del_r}\phi(\del_r))=\phi^{-1}(\bar{\nabla}_{\del_r}\del_r)=0\stackrel{!}{=}[h,d((0..0,1)^t)(\del_r)+\bar{\omega}(dh\del_r)(0..0,1)^t]=[h,\bar{\omega}(dh\del_r)(0..0,1)^t]
\edm
and
\begin{align*}
\phi^{-1}(\bar{\nabla}_{\del_r}\phi(Y))&
=\phi^{-1}(\bar{\nabla}_{\del_r}\frac{1}{ar}ar\phi(Y))
=\phi^{-1}(\frac{1}{ar}\bar{\nabla}_{\del_r}ar\phi(Y)+(\del_r\frac{1}{ar})ar\phi(Y))\\
&=\phi^{-1}(\frac{1}{ar}\frac{1}{r}(ar\phi(Y))-\frac{1}{ar^2}ar\phi(Y))=0\\
&\stackrel{!}{=}[h,0+\bar{\omega}(dh\del_r)(Y_1,..,Y_n,0)^t]
\end{align*}
and thus $\bar{\omega}(dh\del_r)=0$. 
Furthermore $X=[\tilde{h},ar(X_1,..,X_n,0)^t]=[\tilde{h},ar\eta(dhX)]$ and we get
\bdm
\phi^{-1}(\bar{\nabla}_{X}\phi(\del_r))
=\phi^{-1}(\bar{\nabla}_{X}\del_r)
=\phi^{-1}(\frac{1}{r}X)=\phi^{-1}([\tilde{h},a\eta(dhX)])
=[h,a\eta(dhX)],
\edm
proving $a\eta=\bar\omega\cdot\del_r$.
Since $\phi(Y)=[\tilde{h},(Y_1,..,Y_n,0)^t]$, we have 
$\bar{g}(X,ar\phi(Y))=a^2r^2\eta(dhX)^t\cdot (Y_1,..,Y_n)^t$. Furthermore we have
\bdm
\nabla_Xar\phi(Y)=[\tilde{h},ar(d(Y_1,..Y_n,0)^t(X)
+ar(\omega(dhX)(Y_1,..,Y_n)^t,0)^t]
\edm
and obtain
\begin{align*}
 &\phi^{-1}(\bar{\nabla}_{X}\phi(Y))
=\phi^{-1}(\frac{1}{ar}\bar{\nabla}_Xar\phi(Y))
=\phi^{-1}(\frac{1}{ar}\nabla_Xar\phi(Y)
-\frac{1}{ar}\frac{1}{r}\bar{g}(X,ar\phi(Y))\del_r)\\
&=\phi^{-1}([\tilde{h},d(Y_1,..,Y_n,0)^t(X)
+(\omega(dhX)(Y_1,..,Y_n)^t,0)^t-a\eta(dhX)^t(Y_1,..,Y_n,0)^t(0,..,0,1)^t]).
\end{align*}
Combining all these results yields
\bdm
\bar{\omega}=\begin{bmatrix}\omega & a\eta \\ -a\eta^t & 0 \end{bmatrix}.
\edm
If one changes the orientation of $\bar{M}$ (a local $\SO(\bar{M})$ frame is
then given by $(\frac{1}{ar}X_1,...,\frac{1}{ar}X_2,-\del_r$)), we obtain
the alternative connection form
\bdm
\begin{bmatrix}\omega & -a\eta \\ a\eta^t & 0 \end{bmatrix}.
\edm
For  a Killing spinor  on $M$ with real Killing number $\alpha$, we thus choose
the cone constant $a=-2\alpha$ for $\alpha<0$ and $a=2\alpha$ for $\alpha>0$. Hence,
the cone \emph{depends} on the Killing number and the construction only makes 
sense if $\alpha\in\R\backslash\{0\}$, as we had assumed from the beginning.
In particular, the results cannot be applied to $\nabla$-parallel spinors
($\alpha=0$).  The pullback 
of the connection $\tilde{\omega}$ under the projection $\pi:\bar{M}\ra M$ 
is the same as the connection $\bar{\omega}$ on $\bar{M}$, thus 
their holonomy groups $Hol(\tilde{\omega})$ and $Hol(\bar{\omega})$ are the
same. Since the second Stiefel-Whitney class of $\bar M=M\times\R$ is given by 
\cite[p.142]{T52}
\bdm
\mathit{w}_2(\bar M)\ =\ \mathit{w}_2(M)+\mathit{w}_2(\R)
+\mathit{w}_1(M)\otimes\mathit{w}_1(\R),
\edm
we conclude that $\bar M$ is spin, since we assumed $M$ to be spin.

Let us now have a closer look at spinors on $M$ and $\bar M$.
A parallel spinor of $(\bar{M},\bar{\omega})$ is the same as a trivial factor 
of the action of the holonomy group $Hol(\bar{\omega})=Hol(\tilde{\omega})$ 
on $\Delta_{n+1}$. A Killing spinor on ($M$,$\omega$) corresponds to a 
trivial factor of the action of the same group on the space $\Delta_n$.

For $n=\dim(M)$ odd,  the spin representation splits into 
$\Delta_{n+1}=\Delta_n^+\oplus\Delta_n^-$. Changing the orientation of
$\bar{M}$ (changing from negative to positive $\alpha$ and vice versa) means 
interchanging $\Delta_n^+$ and $\Delta_n^-$. Thus, a parallel spinor on
$\bar{M}$ is either in $\Delta_n^+$ or in $\Delta_n^-$, giving either a
Killing spinor with positive or with negative Killing number $\alpha$. 

For $n$ even, we have $\Delta_n=\Delta_{n+1}$ and, by interchanging the
orientation, we obtain for any parallel spinor in $\bar{M}$ one Killing 
spinor with positive, and one with negative Killing number $\alpha$. 
We summarize these results in the following lemma:
\begin{lem}\label{lem:ks}
 For a Riemannian spin manifold $(M,g)$ with connection $\nabla$ with 
skew symmetric torsion $T$, consider the manifold $(\bar M,\bar g)$ with 
connection $\bar \nabla$ with skew symmetric torsion $\bar T$ as constructed
above. The following  correspondence holds:
\begin{itemize}
 \item If $n=\dim(M)$ is odd, any $\bar \nabla$-parallel spinor on $\bar M$ 
corresponds to a $\nabla$-Killing spinor on $M$, with either positive or 
negative Killing number $\frac{1}{2}a$ or $-\frac{1}{2}a$.
\item If $n$ is even, any $\bar \nabla$-parallel spinor on $\bar M$
  corresponds to a pair of $\nabla$-Killing spinors on $M$ with Killing 
number $\pm\frac{1}{2}a$.
\end{itemize}
\end{lem}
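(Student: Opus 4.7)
The plan is to read the statement off from the identification of connections established in the preceding paragraphs. The key observation is that the calculation yielding $\pi^{*}\tilde{\omega}=\bar{\omega}$ implies $\mathrm{Hol}(\bar{\omega})=\mathrm{Hol}(\tilde{\omega})\subset\Spin(n+1)$, so that $\bar{\nabla}$-parallel spinor fields on $\bar{M}$ are in natural bijection with $\tilde{\nabla}$-parallel sections of the associated bundle $\tilde{P}_{\Spin(n+1)}M\times_{\Spin(n+1)}\Delta_{n+1}$ over $M$. Since the defining relation $\tilde{\nabla}_{X}\sigma=\nabla_{X}\sigma+\alpha\,X\cdot\sigma$ identifies $\tilde{\nabla}$-parallel sections with $\nabla$-Killing spinors on $M$ of Killing number $-\alpha$, and since the cone constant was fixed as $a=2|\alpha|$, the entire problem reduces to a branching analysis of $\Delta_{n+1}$ as a $\Spin(n)$-module.

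I would then invoke the standard branching rules for the spin representation. For $n$ odd one has $\Delta_{n+1}|_{\Spin(n)}\cong\Delta_{n}^{+}\oplus\Delta_{n}^{-}$ in the notation of the excerpt, the two half-spinor summands being swapped by Clifford multiplication with the unit normal $\partial_{r}$; equivalently, reversing the orientation of $\bar{M}$ (replacing $\partial_{r}$ by $-\partial_{r}$) interchanges them. Since the cone constant is $a=\pm 2\alpha$ according to the sign of $\alpha$, a fixed orientation of $\bar{M}$ selects one summand and hence one sign of the Killing number. Each $\bar{\nabla}$-parallel spinor therefore yields a single $\nabla$-Killing spinor on $M$ with Killing number $\pm a/2$, the sign being dictated by the chosen orientation, which proves the first bullet.

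For $n$ even, in contrast, $\Delta_{n+1}\cong\Delta_{n}$ as $\Spin(n)$-modules. The two possible orientations of $\bar{M}$ then provide two distinct inclusions $\spin(n)\hookrightarrow\spin(n+1)$ differing by conjugation with a normal vector, and correspondingly two identifications of any $\bar{\nabla}$-parallel spinor with a $\nabla$-Killing spinor on $M$. Clifford multiplication by $\partial_{r}$ flips the sign of $\alpha$ in the resulting Killing equation, so the same parallel spinor on $\bar{M}$ produces a pair of $\nabla$-Killing spinors with Killing numbers $\pm a/2$, yielding the second bullet.

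The hard part will really be the careful bookkeeping of orientations and sign conventions, i.e.~making precise how orientation reversal of $\bar{M}$ interacts with the branching of spin modules and with the sign of $\alpha$ appearing in the definition of $\tilde{\omega}$. No further geometric input is needed: all the analytic work has been absorbed into the earlier calculation of $\bar{\omega}$, and what remains is purely representation-theoretic.
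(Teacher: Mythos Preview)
Your proposal is correct and follows essentially the same line as the paper. In fact the paper gives no separate proof of this lemma at all: it is stated as a summary of the preceding construction, which already established $\pi^{*}\tilde{\omega}=\bar{\omega}$, hence $\mathrm{Hol}(\bar{\omega})=\mathrm{Hol}(\tilde{\omega})$, and then invoked exactly the branching $\Delta_{n+1}=\Delta_n^{+}\oplus\Delta_n^{-}$ for $n$ odd (with orientation reversal swapping the summands) and $\Delta_{n+1}\cong\Delta_n$ for $n$ even, just as you outline.
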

\begin{NB}
For $\dim M$ even, one can write down the bijection between Killing spinors
with torsion with Killing numbers $\pm \alpha$ explicitly: If $\psi$
has Killing number $\alpha$ and decomposes into $\psi=\psi_+ +\psi_-$ in the
spin bundle $\Sigma M=\Sigma^+ M\oplus \Sigma^- M$,
then $\psi_+ - \psi_-$ is a  Killing spinor with 
Killing number $-\alpha$. This is the same argument as in the Riemannian case
\cite[p.121]{BFGK}.
\end{NB}
\begin{NB}
The careful reader will have noticed that our cone is slightly more
general than in \cite{B93}, where the computations are done for cone constant
$a=1$. This stems from
the fact that in the Riemannian case, the Killing number is determined through
$n=\dim M$ and $\Scal^g$ (remember that the manifold has to be Einstein), 
hence the cone can be normalized in such a way
that $a=1$. For our applications, this is too restrictive. 

\end{NB}
%
%
\subsection{$G$ structures and their characteristic connections}%
\label{sec:som}
%
%
Let $(M,g)$ be an oriented Riemannian manifold with Levi-Civita 
connection $\nabla^g$. 
By definition, a  $G$ structure on $M$ is a $G$ reduction of the frame bundle
of $M$ to some closed  subgroup $G\subset\SO(n)$. If $M$ admits a 
metric connection $\nabla^c$ with skew symmetric torsion $T^c$
preserving the $G$ structure, it will be called a \emph{characteristic connection}.
The following result proves the uniqueness of the characteristic connection
in many geometric situations:
\begin{thm}[{\cite[Thm 2.1.]{AFH12}}]\label{char-conn-unique}
Let $G\subsetneq\SO(n)$ be a connected Lie subgroup acting irreducibly on 
$\R^n$, and assume that $G$ does not act on $\R^n$ by its adjoint 
representation. Then the characteristic connection of a 
$G$ structure on a Riemannian manifold $(M,g)$ is, if existent, unique.
\end{thm}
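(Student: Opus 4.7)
The plan is to compare two hypothetical characteristic connections $\nabla^1,\nabla^2$, reduce the question to a purely representation-theoretic statement about $\g\subset\so(n)$, and then settle that statement using the two hypotheses (irreducibility of $V=\R^n$ and the exclusion of the adjoint case).

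First I would carry out the standard difference argument. Set $A=\nabla^1-\nabla^2$. Because both connections are metric, $A$ is a $1$-form with values in $\so(TM)$; because both preserve the $G$ structure, it actually takes values in the subbundle associated to $\g\subset\so(n)$. The difference of the two torsions $\Sigma=T^1-T^2$ satisfies $\Sigma(X,Y)=A(X)Y-A(Y)X$, so taking the inner product with $Z$ gives
\[
 g(\Sigma(X,Y),Z)=g(A(X)Y,Z)-g(A(Y)X,Z).
\]
Since $T^1$ and $T^2$ are $3$-forms, so is $\Sigma$. Exploiting this total skew-symmetry together with the already present skew-symmetry of $A(X)\in\so(V)$ in its last two slots, a short calculation shows that $S(X,Y,Z):=g(A(X)Y,Z)$ is itself totally skew, i.e.\ a genuine element of $\Lambda^3V^*$, and that $S=\tfrac12\Sigma$. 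Thus $A$ lies in the ``first skew prolongation''
\[
 \bar\g^{(1)}\ :=\ \{A\in V^*\otimes\g\ :\ g(A(X)Y,Z)\in\Lambda^3 V^*\},
\]
and uniqueness is equivalent to $\bar\g^{(1)}=0$.

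Next I would view $\bar\g^{(1)}$ as a $G$-submodule of $\Lambda^3V^*$ (via $A\mapsto S$) and at the same time of $V^*\otimes\g$ (via the definition). Both identifications are $G$-equivariant, so $\bar\g^{(1)}$ is an isotypic piece of $\Lambda^3V^*$ whose copies also embed in $V^*\otimes\g$ in the particular way dictated by the contraction $S\mapsto(X\mapsto\iota_XS)$. For any $0\neq S\in\bar\g^{(1)}$ one then considers the linear (not a priori equivariant) map $\phi_S\colon V\to\g$, $X\mapsto\iota_XS$, and studies its image $\mathfrak h:=\phi_S(V)\subset\g$. Using total skew-symmetry of $S$ one checks that $\phi_S(X)Y=-\phi_S(Y)X$; combined with the fact that $\phi_S(X)\in\g$ this realises a $V$-valued antisymmetric bracket on $V$ which is compatible with the $G$-action.

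The final and hardest step is to extract from this a rigidity statement: under irreducibility of $V$, the module $\bar\g^{(1)}$ can only be non-zero if the embedding $\phi_S$ identifies $V$ with a non-trivial $G$-submodule of $\g$ so that the two actions match, which by irreducibility forces $V\cong\g$ with $G$ acting by the adjoint representation -- exactly the case excluded by hypothesis. (The model example showing the adjoint case really is the only obstruction is $S(X,Y,Z)=\langle[X,Y],Z\rangle$ on a simple Lie algebra, which does lie in $\bar\g^{(1)}$.) I expect this last step to be the main obstacle, because the argument is not purely formal: one either invokes a Schur-type analysis on the finitely many irreducible $G\subsetneq\SO(n)$, or appeals to Berger's list together with case-by-case tables of the decompositions of $\Lambda^3V^*$ and $V^*\otimes\g$. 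In any event, the reduction in steps 1-3 is the conceptual heart, and step 4 is a check that under the stated assumptions no common isotypic component survives.
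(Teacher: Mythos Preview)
This theorem is not proved in the present paper; it is quoted from \cite{AFH12} (their Theorem~2.1) and used as a black box, so there is no proof here against which to compare your proposal.

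On its own merits: your reduction in the first three steps is correct and standard. The difference of two characteristic connections is a $\g$-valued $1$-form $A$, the associated trilinear form $S(X,Y,Z)=g(A(X)Y,Z)$ is totally skew, and uniqueness is equivalent to the vanishing of $\bar\g^{(1)}=(V\otimes\g)\cap\Lambda^3V$. This is exactly how the argument in \cite{AFH12} begins.

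Your step~4, however, has a genuine gap. The map $\phi_S\colon V\to\g$, $X\mapsto X\lrcorner S$, is $G$-equivariant only when $S$ itself is $G$-invariant; for a general $S\in\bar\g^{(1)}$ it is merely linear, so you cannot apply Schur's lemma to it and conclude that $V$ embeds in $\g$ as a $G$-module. What \emph{is} equivariant is the assignment $\bar\g^{(1)}\to\Hom(V,\g)$, $S\mapsto\phi_S$, but non-triviality of this map only reconfirms $\bar\g^{(1)}\subset V\otimes\g$, which you already knew. The passage from $\bar\g^{(1)}\neq0$ to ``$V$ is the adjoint representation'' is the actual content of the theorem and does not fall out of your set-up; you should consult \cite{AFH12} for the argument. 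Your fallback suggestion of a case-by-case check is also not viable as stated: there are infinitely many connected $G\subsetneq\SO(n)$ acting irreducibly on $\R^n$, and Berger's list classifies irreducible holonomy groups, not arbitrary irreducible subgroups.
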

This applies, for example, to almost hermitian structures
($\SU(n)\subset\SO(2n)$), $G_2$ structures in dimension $7$ and $\Spin(7)$ 
structures in dimension $8$ (but not to metric almost contact structures).

Let us introduce the $G$ structures considered in this article.
%
\subsubsection*{\underline{Metric almost contact structures}}
%
Let $M$ be a $n=2k+1$ dimensional manifold. Given a Riemannian
metric $g$, a (1,1)-tensor $\phi:TM\ra TM$, a $1$-form $\eta$ with dual vector field $\xi$ of length one, and 
the  $(2,0)$-tensor $F$ defined by $F(v,w):=g(v,\phi(w))$,
we call $(M,g,\phi,\eta)$ a metric almost contact structure if 
\bdm
\phi^2=-id+\eta\otimes\xi~~~~~ \mbox{ and } ~~~~~g(\phi v,\phi w)
=g(v,w)-\eta(v)\eta(w).
\edm
In \cite[Thm 4.1.D]{B02}, D.~Blair shows that $\phi(\xi)=0$ and $\eta\circ\phi=0$.
Since 
\bdm
g(v,\phi(w))\ =\ g(\phi(v),\phi^2(w))+\eta(v)\eta(\phi(w))
\ =\ g(\phi(v),-w+\eta(w)\xi)\ =\ -g(\phi(v),w),
\edm
for all $v,w\in TM$, $F$ is actually a $2$-form.
In terms of the Levi-Civita connection $\nabla^g$ on $M$, the Nijenhuis tensor of a metric almost contact structure is defined by
\begin{align*}
N(X,Y,Z)\ :=\ &g((\nabla^g_X\phi)(\phi(Y))-(\nabla^g_Y\phi)(\phi(X))+(\nabla^g_{\phi(X)}\phi)(Y)-(\nabla^g_{\phi(Y)}\phi)(X),Z)\\
&+\eta(X)g(\nabla^g_Y\xi,Z)-\eta(Y)g(\nabla^g_X\xi,Z).
\end{align*}
The classification of metric almost contact  structures is, alas, relatively
involved. For future reference, we recall in the following table
the exact definition of the different
classes of of $n$-dimensional metric almost contact manifolds given by 
Chinea and Gonzalez \cite{CG90}.

\smallskip
\begin{center}
\begin{tabular}{|c|c|c|}
 \hline
 class & defining relation\\
\hline
\hline
$\mathcal{C}_1$ & $(\nabla^g_XF)(Y,Z)=0$, $\nabla^g\eta=0$\\
\hline
$\mathcal{C}_2$ & $dF=\nabla^g\eta=0$\\
\hline
$\mathcal{C}_3$ & $(\nabla^g_XF)(Y,Z)-(\nabla^g_{\phi X}F)(\phi Y,Z)=0$\\
\hline
$\mathcal{C}_4$ & $(\nabla^g_XF)(Y,Z)=-\frac{1}{n-3}[g(\phi X,\phi Y)\delta F(Z)-g(\phi X,\phi Z)\delta F(Y)$\\&$-F(X,Y)\delta F(\phi Z)+F(X,Z,\delta F(\phi Y)]$, ~~~$\delta F(\xi)=0$\\
\hline
$\mathcal{C}_5$ & $(\nabla^g_XF)(Y,Z)=\frac{1}{n-1}[F(X,Z)\eta(Y)-F(X,Y)\eta(Z)]\delta\eta$\\
\hline
$\mathcal{C}_6$ & $(\nabla^g_XF)(Y,Z)=\frac{1}{n-1}[g(X,Z)\eta(Y)-g(X,Y)\eta(Z)]\delta F(\xi)$\\
\hline
$\mathcal{C}_7$ & $(\nabla^g_XF)(Y,Z)=\eta(Z)(\nabla^g_Y\eta)(\phi X)+\eta(Y)(\nabla^g_{\phi X}\eta)(Z)$,~~~ $\delta F=0$\\
\hline
$\mathcal{C}_8$ & $(\nabla^g_XF)(Y,Z)=-\eta(Z)(\nabla^g_Y\eta)(\phi X)+\eta(Y)(\nabla^g_{\phi X}\eta)(Z)$,~~~ $\delta \eta=0$\\
\hline
$\mathcal{C}_9$ & $(\nabla^g_XF)(Y,Z)=\eta(Z)(\nabla^g_Y\eta)(\phi X)-\eta(Y)(\nabla^g_{\phi X}\eta)(Z)$\\
\hline
$\mathcal{C}_{10}$ & $(\nabla^g_XF)(Y,Z)=-\eta(Z)(\nabla^g_Y\eta)(\phi X)-\eta(Y)(\nabla^g_{\phi X}\eta)(Z)$\\
\hline
$\mathcal{C}_{11}$ & $(\nabla^g_XF)(Y,Z)=-\eta(X)(\nabla^g_\xi F)(\phi Y,\phi Z)$\\
\hline
$\mathcal{C}_{12}$ & $(\nabla^g_XF)(Y,Z)=\eta(X)\eta(Z)(\nabla^g_\xi\eta)(\phi Y)-\eta(X)\eta(Y)(\nabla^g_\xi\eta)(\phi Z)$\\
 \hline
\end{tabular}
\end{center}

\medskip
The most important classes are 
\begin{itemize}
 \item $\mathcal C_3\oplus..\oplus\mathcal C_8$, the normal structures 
characterized by $N=0$,
\item $\mathcal C_6\oplus C_7$, the quasi Sasaki structures: normal 
structures satisfying $dF=0$,
 \item $\mathcal C_6$, the $\alpha$-Sasaki structures:
normal structures with $\alpha F=d\eta$ for some constant $\alpha$,
 \item Sasaki structures: $\alpha$-Sasaki structures with $\delta F(\xi)=n-1$.
\end{itemize}
Other classifications we will not consider here are formulated in terms of 
the Niejenhuis tensor or by considering the direct (not the twisted) 
product $M\times \R$ (\cite{CM92} and \cite{O85}).
It turns out that the tensor
$\alpha(X,Y,Z):=(\nabla^g_{X}F)(Y,Z)$ will be a useful tool for the
investigation of metric almost contact structures. It satisfies 
the general formula
\be\label{eq:genforacs}
\alpha(X,Y,Z)\,=\, -\alpha(X,Z,Y)\, =\, 
-\alpha(X,\phi Y,\phi Z)+\eta(Y)\alpha(X,\xi,Z)+\eta(Z)\alpha(X,Y,\xi).
\ee
This implies 
\bdm
\alpha(X,Y,\phi Y)\, =\,
-\alpha(X,\phi Y,\phi^2 Y)+\eta(Y)\alpha(X,\xi,\phi Y)\,
=\, -\alpha(X,Y,\phi Y)+2\eta(Y)\alpha(X,\xi,\phi Y),
\edm
so we have
\begin{equation}\label{eq:acsalphaphi}
\alpha(X,Y,\phi Y)=\eta(Y)\alpha(X,\xi,\phi Y).
\end{equation}
A metric almost contact structure admits a  characteristic connection 
if and only if its Nijenhuis tensor is skew symmetric and $\xi$ is a Killing 
vector field, and it is then unique \cite[Thm 8.2]{FI02}. If it exists, its torsion tensor is given by
\bdm
T\ =\ \eta\wedge d\eta +dF^\phi+N-\eta\wedge(\xi\lrcorner N),
\edm
where $dF^\phi:=dF\circ\phi$. 
We shall now prove a useful criterion for the existence of a characteristic 
connection.
\begin{lem}\label{lem.contact-char-connection}
A metric almost contact manifold $(M,g,\phi,\eta)$ admits a 
characteristic connection if and only if
\bdm
(\nabla^g_YF)(Y,\phi X)+(\nabla^g_{\phi Y}F)(Y,X)\ =\ 0.
\edm
\end{lem}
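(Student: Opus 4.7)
The plan is to combine the Friedrich--Ivanov criterion from \cite[Thm 8.2]{FI02} --- which asserts that a characteristic connection exists precisely when $\xi$ is Killing \emph{and} $N$ is totally skew-symmetric --- with a direct algebraic reduction showing that these two conditions together are equivalent to the single identity in the statement. Throughout, set $\alpha(X,Y,Z) := (\nabla^g_X F)(Y,Z)$; this tensor is skew in $(Y,Z)$. Two preparatory facts are used repeatedly: differentiating $F(\xi,\cdot) = g(\xi,\phi\,\cdot) = 0$ along $X$ yields $(\nabla^g_X\eta)(Y) = \alpha(X,\xi,\phi Y)$, so the Killing equation for $\xi$ reads
\begin{equation*}
\text{(K)}\qquad \alpha(X,\xi,\phi Y) + \alpha(Y,\xi,\phi X) \ =\ 0,
\end{equation*}
and the skew-adjointness of $\nabla^g_A\phi$ yields $g((\nabla^g_A\phi)B,C) = -\alpha(A,B,C)$, which rewrites the Nijenhuis tensor purely in $\alpha$. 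Since $N$ is manifestly skew in $(X,Y)$, total skew-symmetry of $N$ is equivalent to $N(X,Y,Y) = 0$ for all $X,Y$; setting $Z = Y$ in the $\alpha$-form of $N$, invoking~(\ref{eq:acsalphaphi}) and $\alpha(\phi X,Y,Y) = 0$, the $\eta(Y)$-terms cancel and leave the key identity
\begin{equation*}
N(X,Y,Y)\ =\ \alpha(Y,\phi X,Y) - \alpha(\phi Y,Y,X) + \eta(X)\,\alpha(Y,\xi,\phi Y).
\end{equation*}

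For ``$\Leftarrow$'', assume (K) and $N$ totally skew. Setting $X = Y$ in (K) gives $\alpha(Y,\xi,\phi Y) = 0$, while skew-symmetry of $N$ gives $N(X,Y,Y) = 0$. The key identity then collapses to $\alpha(Y,\phi X,Y) = \alpha(\phi Y,Y,X)$, and rewriting the left side as $-\alpha(Y,Y,\phi X)$ recovers the claimed equation exactly.

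For ``$\Rightarrow$'', assume the claimed equation $(*)$. First, setting $Y = \xi$ in $(*)$ gives $\alpha(\xi,\xi,\phi X) = 0$; since every vector lies in $\im\phi + \R\xi$ and $\alpha(\xi,\xi,\xi)$ vanishes trivially, this extends to $\alpha(\xi,\xi,\cdot) \equiv 0$. Next, setting $X = \xi$ in $(*)$ gives $\alpha(\phi Y,Y,\xi) = 0$; polarising in $Y$ and using $\alpha(\cdot,\cdot,\xi) = -\alpha(\cdot,\xi,\cdot)$ produces $\alpha(\phi Y,\xi,Z) + \alpha(\phi Z,\xi,Y) = 0$. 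The successive substitutions $Z\mapsto\phi X$ and then $Y\mapsto\phi Y$, combined with $\phi^2 = -\Id + \eta\otimes\xi$ and the just-established $\alpha(\xi,\xi,\cdot) = 0$ (which absorbs the $\xi$-correction terms), convert this into (K). Once (K) is available we have $\alpha(Y,\xi,\phi Y) = 0$, so inserting $(*)$ and this vanishing into the key identity forces $N(X,Y,Y) = 0$, whose polarisation in $(Y,Z)$ together with the built-in skew-symmetry in $(X,Y)$ yields $N$ totally skew.

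The main obstacle is the bookkeeping in ``$\Rightarrow$'': condition $(*)$ naturally features $\phi$ in the first slot of $\alpha$, whereas (K) places $\phi$ in the third slot, and the two $\phi$-substitutions bridging them can only be carried out after the auxiliary vanishing $\alpha(\xi,\xi,\cdot) \equiv 0$ has first been extracted from $(*)$.
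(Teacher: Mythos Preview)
Your proof is correct and follows essentially the same route as the paper's: both rewrite $N$ in terms of the tensor $\alpha$, reduce total skew-symmetry to $N(X,Y,Y)=0$, simplify via identity~(\ref{eq:acsalphaphi}) to obtain the key identity, and combine with the Killing condition $\alpha(Y,\xi,\phi Y)=0$ coming from the Friedrich--Ivanov criterion. Your treatment of the ``$\Rightarrow$'' direction is in fact more explicit than the paper's---the paper compresses the entire converse into the single remark ``To see that this is also sufficient, set $X=\xi$'', whereas you spell out the auxiliary step $\alpha(\xi,\xi,\cdot)=0$ (obtained from $Y=\xi$) and the two $\phi$-substitutions needed to pass from $\alpha(\phi Y,\xi,Z)+\alpha(\phi Z,\xi,Y)=0$ to (K).
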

\begin{proof}
There exists a characteristic connection if and only if the Niejenhuis 
tensor $N$ is skew symmetric and $\xi$ is a Killing vector field. 
Since we have 
\bdm
g(\nabla^g_Y\xi,Z)\, =\, -F(\nabla^g_Y\xi,\phi Z)\,
= \, (\nabla^g_YF)(\xi,\phi Z)\, =\, (\nabla^g_Y\eta)(Z)
\edm
and $(\nabla^g_XF)(Z,Y)=g((\nabla^g_X\phi)Y,Z)$, the Niejenhuis tensor on $M$ 
may be written as 
\begin{align*} 
N(X,Y,Z)~= ~&\alpha(X,Z,\phi Y)-\alpha(Y,Z,\phi X)+\alpha(\phi X,Z,Y)
-\alpha(\phi Y,Z,X)\\
&+\eta(X)\alpha(Y,\xi,\phi Z)-\eta(Y)\alpha(X,\xi,\phi Z).
\end{align*}
Thus $N$ is skew symmetric if 
\bdm
0=N(X,Y,Y)= \alpha(X,Y,\phi Y)-\alpha(Y,Y,\phi X)-\alpha(\phi Y,Y,X)
+\eta(X)\alpha(Y,\xi,\phi Y)-\eta(Y)\alpha(X,\xi,\phi Y).
\edm
%
With equation (\ref{eq:acsalphaphi}),  $N$ is skew symmetric if and only if
\begin{equation}\label{eq:Nschief}
0\ =\ \alpha(Y,Y,\phi X)+\alpha(\phi Y,Y,X)+\eta(X)\alpha(Y,\xi,\phi Y).
\end{equation}
$\xi$ is a Killing vector field if 
$0=g(\nabla^g_X\xi,Y)+g(\nabla^g_Y\xi,X)=\alpha(X,\xi,\phi Y)
+\alpha(Y,\xi,\phi X)$, and this is satisfied if and only if 
$\alpha(Y,\xi,\phi Y)=0$. Together with condition (\ref{eq:Nschief}) 
we obtain the condition
\bdm
0\ =\ \alpha(Y,Y,\phi X)+\alpha(\phi Y,Y,X).
\edm
To see that this is also sufficient, set $X=\xi$.
\end{proof}
\begin{dfn}
In analogy to the almost hermitian and the $G_2$ case, we shall
call a metric almost contact manifold admitting a characteristic connection 
a \emph{metric almost contact manifold with torsion}.
\end{dfn}
With the above lemma we can easily prove
\begin{thm}\label{thm:acscharconn}
Consider a metric almost contact manifold $(M,g,\phi,\eta)$. 
If it is of class
\begin{enumerate}
 \item $\mathcal{C}_1\oplus\mathcal{C}_3\oplus\mathcal{C}_4
\oplus\mathcal{C}_{6}\oplus\mathcal{C}_{7}$, there exists a characteristic connection.
 \item $\mathcal{C}_2$, $\mathcal{C}_5$, $\mathcal{C}_9$, $\mathcal{C}_{10}$, 
$\mathcal{C}_{11}$ or $\mathcal{C}_{12}$ there is no characteristic connection.
 \item $\mathcal{C}_8$ there exists a characteristic connection if and only 
if $\xi$ is a Killing vector field. 
\end{enumerate}
\end{thm}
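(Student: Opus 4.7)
The plan is to apply the criterion from Lemma \ref{lem.contact-char-connection}, which says that a characteristic connection exists if and only if
\[
\alpha(Y,Y,\phi X)+\alpha(\phi Y,Y,X)\ =\ 0 \quad\text{for all } X,Y,
\]
class by class. Since the Chinea--Gonzalez classes form irreducible pieces of the space of admissible tensors $\alpha = \nabla^g F$ under the structure group $U(k)\times 1$, and since the criterion is linear in $\alpha$, checking each of the twelve classes in isolation is enough; for direct sums one then sums the individual verifications.

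For the classes in part (1), I plan a direct substitution of the defining formula into the criterion, using \eqref{eq:genforacs}, \eqref{eq:acsalphaphi}, $\phi\xi=0$, $\eta\circ\phi=0$, and $g(\phi Y,Y)=0$. Class $\mathcal{C}_1$ is immediate since $\alpha\equiv 0$. In $\mathcal{C}_3$, the relation $\alpha(X,Y,Z)=\alpha(\phi X,\phi Y,Z)$ converts $\alpha(Y,Y,\phi X)$ into $\alpha(\phi Y,\phi Y,\phi X)$, which after one application of \eqref{eq:genforacs} cancels $\alpha(\phi Y,Y,X)$ modulo terms that vanish because $\eta\circ\phi=0$. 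For $\mathcal{C}_4,\mathcal{C}_6,\mathcal{C}_7$ the defining expression is built from a single global $1$-form ($\delta F$, $\delta F(\xi)$, respectively a component of $\nabla^g\eta$) contracted with $g$, $F$, $\eta$; the two summands of the criterion cancel after using $g(\phi Y,X)=-g(Y,\phi X)$ and $\eta\circ\phi=0$.

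For part (2) the plan is to exhibit an explicit obstruction in each class. For $\mathcal{C}_5,\mathcal{C}_9,\mathcal{C}_{10}$, the defining formula depends linearly on $\nabla^g\eta$ (or $\delta\eta$), and evaluating $\alpha(Y,\xi,\phi Y)$ on a unit vector $Y\in\xi^\perp$ yields an expression of the form (scalar)$\cdot F(Y,\phi Y)$ which cannot vanish identically without making the defining tensor itself zero; hence $\xi$ fails to be Killing. For $\mathcal{C}_2,\mathcal{C}_{11},\mathcal{C}_{12}$ the Killing condition on $\xi$ is automatic from the defining relation, but the skew-symmetry of the Nijenhuis tensor \eqref{eq:Nschief} fails unless the defining tensor vanishes. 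Finally for part (3): in $\mathcal{C}_8$ the prefactors $\eta(Z),\eta(Y)$ of the defining formula make \eqref{eq:Nschief} automatic, so the criterion reduces precisely to $\alpha(Y,\xi,\phi Y)=0$, i.e.\ to the Killing condition on $\xi$. The main obstacle throughout is not conceptual but bookkeeping: the defining formulas carry several $\phi$- and $\eta$-decorations, and one must apply \eqref{eq:genforacs} and \eqref{eq:acsalphaphi} in the right order to avoid producing bulky $\eta$-correction terms that only eventually cancel.
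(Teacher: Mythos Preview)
Your overall strategy---apply Lemma~\ref{lem.contact-char-connection} class by class---is exactly what the paper does. However, your pre-sorting of the obstructed classes into ``$\xi$ not Killing'' versus ``$N$ not skew'' contains genuine errors that would derail the computation.

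The main problem is $\mathcal{C}_{10}$. You claim $\xi$ fails to be Killing there, but in fact $\xi$ \emph{is} Killing in $\mathcal{C}_{10}$: writing the defining relation in the form $\alpha(X,Y,Z)=c\,\eta(Z)\alpha(Y,X,\xi)+b\,\eta(Y)\alpha(\phi X,\phi Z,\xi)$ with $(c,b)=(-1,1)$, one gets $(1-cb)\alpha(Y,\phi Y,\xi)=2\alpha(Y,\phi Y,\xi)=0$, which is precisely the Killing condition. The obstruction in $\mathcal{C}_{10}$ is the skew-symmetry of $N$ (equivalently, the full criterion forces $\alpha(\phi Y,X,\xi)=0$ and hence $\alpha=0$). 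Your proposed argument for $\mathcal{C}_{10}$ would therefore fail. A similar mis-diagnosis occurs for $\mathcal{C}_{12}$: the Killing condition is \emph{not} automatic there, since $\alpha(Y,\xi,\phi Y)=-\eta(Y)(\nabla^g_\xi\eta)(\phi^2 Y)$ is exactly the nonzero data carried by $\mathcal{C}_{12}$.

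A minor point: your claim that $\alpha\equiv 0$ in $\mathcal{C}_1$ comes from reading the table literally, but the paper's own proof uses the correct characterization $\alpha(X,X,Y)=0$, $\alpha(X,Z,\xi)=0$ (the table entry appears to have a typo, $(\nabla^g_XF)(Y,Z)=0$ for $(\nabla^g_XF)(X,Z)=0$). The paper avoids all of these pitfalls by not guessing in advance which half of the criterion fails: it simply computes $\alpha(Y,Y,\phi X)+\alpha(\phi Y,Y,X)$ directly from each defining relation and reads off whether the result can vanish without forcing $\alpha=0$. I recommend you do the same rather than relying on the heuristic split.
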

\begin{proof}
 We check the different cases:\\
In $\mathcal{C}_1$ we have $\alpha(X,X,Y)=\alpha(X,Z,\xi)=0$ and we thus 
get $\alpha(Y,Y,\phi X)+\alpha(\phi Y,Y,X)=0$.\\
For a structure given by $\alpha$ in the class $\mathcal{C}_2$ we have 
\bdm
\alpha(X,Y,Z)+\alpha(Y,Z,X)+\alpha(Z,X,Y)\ =\ \alpha(X,Y,\xi)\ =\ 0,
\edm
and equation (\ref{eq:acsalphaphi}) yields
\begin{align*}
\alpha(Y,Y,\phi X)+\alpha(\phi Y,Y,X)=\ &\alpha(Y,Y,\phi X)
-\alpha(Y,X,\phi Y)-\alpha(X,\phi Y,Y)\\
=\ &\alpha(Y,Y,\phi X)+\alpha(Y,\phi Y,X)\stackrel{(\ref{eq:genforacs})}{=}
-\alpha(Y,\phi Y,\phi^2 X)+\alpha(Y,\phi Y,X)\\
=\ &2\alpha(Y,Y,\phi X).
\end{align*}
Thus the condition $\alpha(Y,Y,\phi X)+\alpha(\phi Y,Y,X)=0$ implies 
$0=\alpha(Y,Y,\phi^2 X)=-\alpha(Y,Y,X)$ since $\alpha(Y,Y,\xi)=0$. Therefore 
$\alpha$ has to be also of class $\mathcal{C}_1$, which implies $\alpha=0$.\\
In $\mathcal{C}_3$ we have $\alpha(X,Y,Z)=\alpha(\phi X,\phi Y,Z)$ and get
\begin{align*}
\alpha(Y,Y,\phi X)+\alpha(\phi Y,Y,X)\,=\, &\,
\alpha(Y,Y,\phi X)-\alpha(\phi Y,X,Y)\\
=& \,\alpha(Y,Y,\phi X)-\alpha(\phi^2 Y,\phi X,Y)=\alpha(Y,Y,\phi X)
+\alpha(Y,\phi X,Y)=0
\end{align*}
since $\alpha(\xi,X,Y)=0$ in $\mathcal{C}_1\oplus...\oplus\mathcal{C}_{10}$.\\
A structure is of class $\mathcal{C}_3\oplus...\oplus\mathcal{C}_{8}$ 
if and only if $N=0$ thus we just have to check the condition 
$\alpha(Y,\xi,\phi Y)=0$, which is satisfied in $\mathcal{C}_4$ 
and $\mathcal{C}_6$.\\
$\mathcal{C}_5$ is given by the condition 
$\alpha(X,Y,Z)=\frac{\delta \eta}{n-1}(F(X,Z)\eta(Y)-F(X,Y)\eta(Z))$ 
such that the condition $\alpha(Y,\xi,\phi Y)=0$ implies $\delta\eta=0$ 
and thus $\alpha=0$.\\
For $(c,b)=(1,-1)$ in $\mathcal{C}_7$, $(c,b)=(-1,-1)$ in $\mathcal{C}_8$, $(c,b)=(1,1)$ in $\mathcal{C}_9$ and $(c,b)=(-1,1)$ in $\mathcal{C}_{10}$ we have
\bdm
\alpha(X,Y,Z)\ =\ c\eta(Z)\alpha(Y,X,\xi)+b\eta(Y)\alpha(\phi X,\phi Z,\xi)
\edm
and get $\alpha(X,Y,\xi)=c\alpha(Y,X,\xi)$ and 
$\alpha(X,\phi Y,\xi)=b\alpha(X,\phi Y,\xi)$, implying 
$(1-cb)\alpha(Y,\phi Y,\xi)=0$. Thus in $\mathcal{C}_7$ and 
$\mathcal{C}_{10}$ the vector field $\xi$ is Killing. Since in 
$\mathcal{C}_7$ we have $N=0$, we have a characteristic connection here. 
In $\mathcal{C}_8$ we have a characteristic connection if and only if 
$\xi$ is Killing.
In $\mathcal{C}_9$ and $\mathcal{C}_{10}$ we have $b=1$ and thus
\begin{align*}
\alpha(Y,Y,\phi X)+\alpha(\phi Y,Y,X)=&-\eta(Y)
\alpha(\phi Y,X,\xi)+c\eta(X)\alpha(Y,\phi Y,\xi)-\eta(Y)\alpha(Y,\phi X,\xi)\\
=&-2\eta(Y)\alpha(\phi Y,X,\xi)+c\eta(X)\alpha(Y,\phi Y,\xi).
\end{align*}
For $X=\xi$ the condition $\alpha(Y,Y,\phi X)+\alpha(\phi Y,Y,X)=0$ 
implies $\alpha(Y,\phi Y,\xi)=0$ and thus we have $0=\alpha(\phi Y,X,\xi)$ 
and also $0=\alpha(\phi^2 Y,X,\xi)=-\alpha(Y,X,\xi)$ since $\alpha(\xi,X,Y)=0$. 
So we have already $\alpha=0$.\\
$\mathcal{C}_{11}$ is given by the condition 
$\alpha(X,Y,Z)=-\eta(X)\alpha(\xi,\phi Y,\phi Z)$ and thus with 
$\alpha(\xi,\xi,X)=0$ we get
\bdm
\alpha(Y,Y,\phi X)+\alpha(\phi Y,Y,X)\ =\ \eta(Y)\alpha(\xi,\phi Y,X).
\edm
Because $\alpha(\xi,\phi Y,X)=0$ already implies $\alpha(\xi,Y,X)=0$, 
we obtain in this case  immediately $\alpha=0$.\\
In $\mathcal{C}_{12}$ we have 
$\alpha(X,Y,Z)=\eta(X)\eta(Y)\alpha(\xi,\xi,Z)+\eta(X)\eta(Z)\alpha(\xi,Y,\xi)$ 
and thus $0=\alpha(Y,Y,\phi X)+\alpha(\phi Y,Y,X)
=\eta(Y)^2\alpha(\xi,\xi,\phi X)$ gives us $\alpha=0$.
\end{proof}
\begin{NB}
 The conditions for a metric almost contact structure to admit a characteristic 
connection in Theorem \ref{thm:acscharconn} are sufficient but not necessary. 
In \cite{P12} C. Puhle proves that in the case $n=5$, there are structures 
of class $\mathcal{C}_{10}\oplus\mathcal{C}_{11}$ (in his class
$\mathcal{W}_4$)
 carrying a characteristic connection. Thus a structure with characteristic connection 
is never of pure class $\mathcal{C}_{10}$ nor of class $\mathcal{C}_{11}$, but it 
can be of mixed class $\mathcal{C}_{10}\oplus\mathcal{C}_{11}$.
 But more detailed descriptions are possible in some cases. For example,
if we set  $Y=\xi$, the equation $0=\alpha(Y,Y,\phi X)+\alpha(\phi Y,Y,X)$ 
immediately implies that a structure with characteristic connection is of 
class $\mathcal{C}_1\oplus...\oplus\mathcal{C}_{11}$.
\end{NB}
%
\subsubsection*{\underline{Almost hermitian structures}}
%
Let $(M,g)$ be a $2m$-dimensional Riemannian manifold equipped with a $(1,1)$-tensor
\bdm
J:TM\ra TM ~~~\mbox{ with }~~~ J^2=-\Id_{TM}, ~~~\mbox{ and }~~~ g(JX,JY)=g(X,Y).
\edm
We define a $2$-form $\Omega(X,Y):=g(X,JY)$.
Then $(M,g,J,\Omega)$ is called an almost hermitian manifold. 
In terms of the Levi-Civita connection $\nabla^g$ on $M$, the Nijenhuis 
tensor of $M$ is defined to be
\begin{align*}
N(X,Y,Z)&=g((\nabla^{ g}_XJ)(JY),Z)-g((\nabla^{g}_YJ)(JX),Z)
+g((\nabla^{g}_{JX}J)(Y),Z)- g((\nabla^{g}_{JY}J)(X),Z).
\end{align*}
Almost hermitian structures were classified by Gray and Hervella in \cite{GH80} into four classes $\chi_1\oplus\chi_2\oplus\chi_3\oplus\chi_4$. An almost hermitian manifold admits a characteristic connection if and only if it is of class 
$\chi_1\oplus\chi_3\oplus\chi_4$ \cite{FI02} and it is always unique
(either by explicit computation as in \cite{FI02} or by the general Theorem 
\ref{char-conn-unique}); manifolds of class $\chi_1\oplus\chi_3\oplus\chi_4$
are sometimes called \emph{K\"ahler manifolds with torsion}, although they
are evidently not K\"ahlerian.   
Their characteristic torsion is given by (see for example \cite{A06})
\bdm
 T\ =\  N+d\Omega^J,
\edm
where $d\Omega^J:=d\Omega\circ J$. For a nearly K\"ahler manifold (class $\chi_1$),
this connection was first introduced and investigated by A.~Gray; on Hermitian
manifolds ($N=0$, i.\,e.~class $\chi_3\oplus\chi_4$) it is sometimes called the
\emph{Bismut connection} \cite{Bismut}. Almost hermitian manifolds of class $\chi_4$ are
locally conformally K\"ahler manifolds.

%
\subsubsection*{\underline{$G_2$ structures}}
%
Let $(M,g,)$ be a $7$-dimensional oriented Riemannian manifold. $M$ is said to
carry a $G_2$ structure if it admits a reduction to $G_2\subset \SO(7)$;
alternatively, this amounts to the choice of a generic $3$-form $\phi$.
With respect to a local orthonormal frame $e_1,\ldots,e_7$, such a $3$-form
 can locally be written as
\bdm
\phi=e_{123}+e_{145}+e_{167}+e_{246}-e_{147}-e_{347}-e_{356}.
\edm
Here and subsequently, we do not distinguish between vectors and covectors and 
abbreviate the $k$-form $e_{i_1}\wedge..\wedge e_{i_k}$ as $e_{i_1..i_k}$.
$G_2$ manifolds were classified by Fern\'andez and Gray in \cite{FG82} into four
classes $\mathcal W_1\oplus\mathcal W_2\oplus\mathcal W_3\oplus\mathcal W_4$.

Friedrich and Ivanov proved that there is a characteristic connection if and only 
if the structure is of class $\mathcal W_1\oplus\mathcal W_3\oplus\mathcal
W_4$; these manifolds are sometimes called \emph{$G_2$ manifolds with torsion} or
\emph{$G_2T$ manifolds} for short. 
In \cite{FI02} a concrete description of the torsion can be found (we do not
need the explicit formula here). We will often used the skew symmetric endomorphism 
$P(X,.)$ introduced in \cite{FG82},
\bdm
\phi(X,Y,Z)\, =\, g(X,P(Y,Z)).
\edm
%
\subsubsection*{\underline{$\Spin(7)$ structures}}
%
In a similar spirit, an  $8$-dimensional oriented Riemannian manifold $(M,g)$ is 
called a $\Spin(7)$ manifold if it has a reduction to $\Spin(7)\subset\SO(8)$,
and this is equivalent to the choice of a $4$-form $\Phi$ which,
in a local frame $e_1,\ldots,e_8$, can be written as
\bdm
\Phi=\phi+*\phi,\mbox{ and } 
\phi=e_{1278}+e_{3478}+e_{5678}+e_{2468}-e_{2358}-e_{1458}-e_{1368}.
\edm
We define a skew symmetric endomorphism $P(X,Y,.)$ on $TM$ via 
\bdm
g(P(X,Y,Z),V)\ =\ \Phi(X,Y,Z,V).
\edm
We extend the metric $g$ to $3$-forms on $TM$ in the usual way, 
i.\,e.~ $g(W_1\wedge W_2\wedge W_3,V_1\wedge V_2\wedge V_3)=\det(g(W_i,V_i))$ for 
$V_i,W_j\in TM$. For $3$-forms $\xi=\sum\limits_{i<j<k}\xi_{ijk}e_{ijk}$ and 
$\eta=\sum\limits_{i<j<k}\eta_{ijk}e_{ijk}$ let $\eta(\xi)$ be defined as 
\bdm
\eta(\xi):=\sum_{i<j<k}\xi_{ijk}\eta(e_i,e_j,e_k)=\sum_{i<j<k}\xi_{ijk}\eta_{ijk}=g(\eta,\xi).
\edm
We define $p(X)$ via 
\bdm
g(X,P(\xi))\ =\ g(p(X),\xi)
\edm 
for $X\in TM$ and a $3$-form $\xi$ on $M$ ($P(\xi)$ is well defined,
since $P$ is totally skew symmetric). $\Spin(7)$ manifolds were classified by 
Fern\'andez in \cite{F85}: they split in the two classes 
$\mathcal U_1$ and $\mathcal U_2$. S.~Ivanov proves in \cite{I04} 
that such a manifold always carries a characteristic connection.
%
\subsection{The cone correspondence for spinors}\label{subsec:cone-corr-spin}
%
Let the cone $(\bar M, \bar g)$ over $M$ with Levi-Civita connection 
$\bar\nabla^{\bar g}$ carry a $\bar G$ structure and assume that there is a 
connection $\nabla$ on $M$ such that its lift $\bar\nabla$ to $\bar M$ with 
torsion $\bar T$ is the characteristic connection on $\bar M$ with respect to 
the given $\bar G$ structure.\\

Given a $G$ structure on $M$, we shall construct an induced $\bar G$ structure 
on $\bar M$ in the following sections. We will see that the characteristic 
connection $\nabla^c$ on $M$ (with torsion $T^c$) does \emph{not} lift to the 
characteristic connection $\bar\nabla$ on $\bar M$ (with torsion $\bar T$, 
introduced by a connection $\nabla$ on $M$ with torsion $T$). In particular 
the lift $\overline{T^c}$ of the characteristic torsion to $\bar M$ is not the 
characteristic torsion on $\bar M$. So the tensor $T^c-T$ is not zero and will 
play an important role in the following.
We want to study the Killing equation with torsion as 
discussed in \cite{ABK12}: For the family of connections ($s \in \R$)
\bdm 
\nabla^s_XY\ =\ \nabla^g_XY+2sT^c(X,Y),
\edm
a spinor $\psi$ is called a \emph{Killing spinor with torsion} if it satisfies
the equation
\bdm
\nabla^s_X\psi\ =\ \alpha X\psi
\edm
for some Killing number $\alpha\in\R-\{0\}$ and some value of $s$. 
This definition includes the choice that we do not  view a parallel spinor 
($\alpha=0$) as a special case of a Killing spinor. A
priori, solutions of this equation with $\alpha\in\C-\R$ are conceivable,
but we are not aware of any. In any event, the cone construction would not
work for such an $\alpha$.

The case  $s=\frac{1}{4}$ corresponds to the 
characteristic connection; however, there are many geometric situations
in which the Killing equation holds for values $s\neq 1/4$.
The connection $\bar \nabla^s$ on $\bar M$ is then given by 
$\bar \nabla^s=\bar\nabla^{\bar g}+2s\bar T$. We obtain the following 
correspondence between connections on $\bar M$ and connections on $M$:

\bigskip
\begin{center}
\begin{tabular}{|c|c|}
 \hline
Connections on $M$ & Connections on $\bar M$\\
\hline
$\nabla^s=\nabla^g+2sT^c$ & $\bar\nabla^{\bar g} + 2s\overline{T^c} = \bar\nabla^s - 2s(\bar T-\overline{T^c})$\\
\hline
$\nabla^g+2sT=\nabla^s+2s(T-T^c)$&$\bar\nabla^s=\bar\nabla^{\bar g}+2s\bar T$\\
\hline
\end{tabular}
\end{center}
\bigskip

A direct application of Lemma \ref{lem:ks} implies: 
%
%
%
\begin{lem}\label{lem:corrksps}
For $\alpha\in\R-\{0\}$, we have the following correspondence between
\bigskip
\begin{center}
\begin{tabular}{|c|c|}
 \hline
spinors on $M$ & spinors on $\bar M$\\
\hline
$\nabla^s_X\psi=\alpha X\psi$ & $\bar\nabla^s_X\psi-sX\lrcorner(\bar T-\overline{T^c})\psi=0$\\
\hline
$\nabla^s_X\psi+sX\lrcorner(T-T^c)\psi=\alpha X\psi$ & $\bar\nabla^s_X\psi=0$\\
\hline
\end{tabular}
\end{center}
\bigskip

For $\dim(M)$ odd, 
there is one spinor on $M$ with either $\alpha=\frac{1}{2}a$ or 
$\alpha=-\frac{1}{2}a$.\\
If $\dim(M)$ is even, there is a pair of spinors with Killing numbers 
$\alpha=\pm\frac{1}{2}a$ on $M$.


%

In particular for $s=\frac{1}{4}$ we obtain the following correspondence:

\bigskip
\begin{center}
\begin{tabular}{|c|c|}
 \hline
spinors on $M$ & spinors on $\bar M$\\
\hline
$\nabla^c_X\psi=\alpha X\psi$ & $\bar\nabla_X\psi=\frac{1}{4}X\lrcorner(\bar T-\overline{T^c})\psi$\\
\hline
$\nabla^c_X\psi=\alpha X\psi-\frac{1}{4}X\lrcorner (T-T^c)~\psi$ & $\bar\nabla_X\psi=0$\\
\hline
\end{tabular}
\end{center}
\bigskip
\end{lem}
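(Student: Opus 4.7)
The plan is to apply Lemma \ref{lem:ks} twice---once for each row of the connection-correspondence table displayed immediately above the statement---and in each case to translate the resulting parallelism condition on $\bar M$ and Killing condition on $M$ from the vector-field level to the spinor level. Throughout I will use the result from Section \ref{ch:constrcone} that a $\nabla$-Killing spinor on $M$ with Killing number $\alpha \neq 0$ corresponds exactly to a parallel spinor on $\bar M$ for the lift of $\nabla$, provided $a = 2|\alpha|$, with the sign of $\alpha$ controlled by the chosen orientation on $\bar M$.

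The key auxiliary tool is the standard transition rule for skew-torsion connections acting on the spin bundle: if two such connections have torsion $3$-forms differing by $\tau$, then the induced spinor connections differ by $\tfrac14(X\lrcorner\tau)\cdot$. Applied on $\bar M$ to $\bar\nabla^s = \bar\nabla^{\bar g}+2s\bar T$ and the lift $\bar\nabla^{\bar g} + 2s\overline{T^c}$ of $\nabla^s$---whose torsion $3$-forms are $4s\bar T$ and $4s\overline{T^c}$---this yields
\begin{equation*}
\bar\nabla^s_X\psi \;=\; \bigl[\bar\nabla^{\bar g} + 2s\overline{T^c}\bigr]_X\psi \;+\; s\,X\lrcorner(\bar T - \overline{T^c})\cdot\psi,
\end{equation*}
so a spinor parallel for $\bar\nabla^{\bar g} + 2s\overline{T^c}$ satisfies precisely the equation in row $1$. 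Since this lifted connection is by construction the lift of $\nabla^s$, Lemma \ref{lem:ks} identifies such parallel spinors with $\nabla^s$-Killing spinors on $M$ of Killing number $\pm\tfrac12 a$, establishing row $1$.

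Row $2$ follows by running the argument in the reverse direction: the connection $\nabla^g + 2sT$ on $M$ has torsion $3$-form $4sT$ and lifts to $\bar\nabla^{\bar g} + 2s\bar T = \bar\nabla^s$, so by Lemma \ref{lem:ks} a $\bar\nabla^s$-parallel spinor on $\bar M$ corresponds to a $(\nabla^g+2sT)$-Killing spinor on $M$ with $\alpha = \pm\tfrac12 a$. Writing $\nabla^g + 2sT = \nabla^s + 2s(T-T^c)$ and applying the analogous spinor-difference formula on $M$ converts the Killing equation to $\nabla^s_X\psi + s\, X\lrcorner(T-T^c)\cdot\psi = \alpha X\cdot\psi$, exactly the second row. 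The $s=\tfrac14$ specialization is then pure substitution. The only delicate point will be the factor-of-two bookkeeping between an added $(1,2)$-tensor on vector fields and the resulting $3$-form acting on spinors; once that convention is fixed, the statement is essentially a one-line consequence of Lemma \ref{lem:ks} combined with the connection-correspondence table.
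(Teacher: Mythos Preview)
Your proposal is correct and follows exactly the approach the paper intends: the paper itself simply states that the lemma is ``a direct application of Lemma \ref{lem:ks}'' together with the connection-correspondence table, and you have spelled out precisely that application, including the spinor-level translation of the torsion difference (which the paper leaves implicit). Your bookkeeping on the factor $s$ versus $2s$ versus $\tfrac14$ is consistent with the paper's conventions, so there is nothing to add.
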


In the following sections we look at the corresponding structures on $\bar M$, 
their classifications and the correspondences of spinors on $M$ and $\bar M$.
%
%
\section{Metric almost contact structures -- almost hermitian 
structures on the cone}\label{sec:acms}
%
\subsection{Preparations}
Let $(M,g,\phi,\eta)$ be an $n$-dimensional metric almost contact structure.
As in Section \ref{ch:constrcone} we construct the twisted cone $\bar{M}$ over 
$M$ and define an almost hermitian structure $J$ on $\bar{M}$ via
\bdm
J(ar\del_r):=\xi,~~~~~ J(\xi):=-ar\del_r~~~~~ \mbox{ and } ~~~~~J(X)=-\phi(X) \mbox{ for } X \perp \xi,\del_r.
\edm
The identity $\phi^2=-\Id+\eta\otimes\xi$  immediately implies $J^2=-\Id$.
\begin{dfn}\label{def:normaldef}
If $M$ admits a characteristic connection $\nabla^c$ with skew symmetric 
torsion $T^c$ satisfying $\nabla^c\phi=\nabla^c\eta=0$, we define a connection 
$\nabla$ with skew symmetric torsion $T$ 
\begin{equation*}
T\ :=\ T^c-2a\eta\wedge F \mbox{ and thus } 
\nabla_XY\ =\ \nabla^c_XY-a(\eta\wedge F)(X,Y,.).
\end{equation*}
\end{dfn}
In particular: If the almost metric contact structure is Sasakian and
the Killing number happens to satisfy $|\alpha|=1/2$ (like
in the Riemannian case), the cone is constructed with $a=1$, and thus 
$T^c=\eta\wedge d\eta=2a\eta\wedge F$ and  $\nabla=\nabla^g$, the
Levi-Civita connection. Thus, $\nabla$ and $T$ measure in some sense  the
difference to the Riemannian Sasakian case.

Although the role of $T$ is clearly exposed in Section
\ref{subsec:cone-corr-spin}, this is not sufficient to determine $T$
completely. Rather, the formula for $T$ has to be found by trying
a suitable Ansatz, the motivation for which comes precisely from
the Riemannian case just described. Since $T$ is unique, the definition
is justified a posteriori by yielding the desired correspondence.

%
%
\begin{thm}\label{th:acs}
 If $(M,g,\phi, \eta)$ is an almost contact metric structure,
 $(\bar{M},\bar{g},J)$ is an almost hermitian manifold.\\
If furthermore  $M$ admits a characteristic connection, consider the
connection $\nabla$ defined above. Then the appendant connection 
$\bar{\nabla}$ on $\bar{M}$ is almost complex, $\bar{\nabla}J=0$.
\end{thm}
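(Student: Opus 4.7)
The plan is to prove both assertions by direct computation, with Part 1 being essentially immediate from the definitions and Part 2 reducing, after careful bookkeeping of cone correction terms, to the identity $(\nabla_X \phi)(Y) = a\,g(X,Y)\xi - a\,\eta(Y)X$ for $X, Y \in TM$, which in turn follows from $\nabla^c\phi = \nabla^c\eta = 0$ together with the explicit relation between $\nabla$ and $\nabla^c$.

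For Part 1, the relation $J^2 = -\Id$ follows at once: on vectors orthogonal to both $\xi$ and $\del_r$ it reduces to $\phi^2 = -\Id + \eta\otimes\xi$ (where the correction vanishes), while on $\xi$ and $\del_r$ it is checked directly from $J(\xi) = -ar\del_r$ and $J(ar\del_r) = \xi$. Hermiticity $\bar g(JX, JY) = \bar g(X,Y)$ is then verified on the orthogonal decomposition of $T\bar M$ into the two lines spanned by $\xi$ and $\del_r$ and their orthogonal complement in $TM$: on the complement one uses $g(\phi X, \phi Y) = g(X,Y) - \eta(X)\eta(Y) = g(X,Y)$ together with the conformal factor $a^2 r^2$, and on the two lines both $\xi$ and $ar\del_r$ have $\bar g$-norm $ar$.

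For Part 2 the preparatory step is to expand $(\eta\wedge F)(X,Y,\cdot)^\sharp = -\eta(X)\phi Y + \eta(Y)\phi X + F(X,Y)\xi$, which yields the explicit formula
\bdm
\nabla_X Y\ =\ \nabla^c_X Y + a\eta(X)\phi Y - a\eta(Y)\phi X - aF(X,Y)\xi
\edm
for $X, Y \in TM$. Combining this with the O'Neill-type identities $\bar\nabla_{\del_r}\del_r = 0$, $\bar\nabla_X \del_r = \bar\nabla_{\del_r} X = \tfrac{1}{r} X$ and $\bar\nabla_X Y = \nabla_X Y - \tfrac{1}{r}\bar g(X,Y)\del_r$ derived in Section~\ref{ch:constrcone}, I would verify $(\bar\nabla_Z J)(W) = 0$ by a case analysis on whether $Z$ or $W$ involve $\del_r$. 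The cases involving $\del_r$ are straightforward: the scaling $J\del_r = \tfrac{1}{ar}\xi$ precisely balances the factor $\tfrac{1}{r}$ from the O'Neill formulas and the derivative of $\tfrac{1}{ar}$ in the $r$-direction.

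The main case, where the real content lies, is $Z = X$, $W = Y \in TM$. Expanding $\bar\nabla_X(-\phi Y - ar\,\eta(Y)\del_r) - J(\bar\nabla_X Y)$ and invoking $X(\eta(Y)) = \eta(\nabla_X Y) + aF(X,Y)$ (itself a consequence of $\nabla^c\eta = 0$ and the formula above), all $\del_r$-components cancel, leaving the $TM$-valued expression
\bdm
(\bar\nabla_X J)(Y)\ =\ -(\nabla_X\phi)(Y) + a\,g(X,Y)\xi - a\,\eta(Y)X.
\edm
It remains to show $(\nabla_X\phi)(Y) = a\,g(X,Y)\xi - a\,\eta(Y)X$: substituting the explicit formula for $\nabla$ into $\nabla_X(\phi Y) - \phi(\nabla_X Y)$, the characteristic part vanishes by $\nabla^c\phi = 0$, and the remaining cross-terms collapse under $\phi^2 = -\Id + \eta\otimes\xi$ -- the $\eta(X)\eta(Y)\xi$ contributions coming from $\phi(\eta(X)\phi Y)$ cancel exactly against those from $\phi(\eta(Y)\phi X)$, and what survives is precisely the required expression. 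The main obstacle is this final bookkeeping: the specific coefficient $-2a$ in the ansatz $T = T^c - 2a\,\eta\wedge F$ is forced by exactly this cancellation, which is the post hoc justification for what otherwise looks like a rather mysterious definition of $\nabla$.
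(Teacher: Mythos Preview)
Your proposal is correct and follows essentially the same route as the paper: both reduce the verification of $\bar\nabla J=0$ to a case analysis on the cone, and the core computation in the $TM$-case hinges on the identity $(\nabla_X\phi)(Y)=a\,g(X,Y)\xi-a\,\eta(Y)X$, which the paper isolates as a separate lemma (Lemma~\ref{lem:normal}) while you fold it into the proof. The only organizational difference is that you treat all $X,Y\in TM$ at once via the formula $JY=-\phi Y-ar\,\eta(Y)\del_r$, whereas the paper splits further into the subcases $X\perp\xi$ and $X=\xi$; your packaging is slightly more economical but mathematically equivalent.
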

\begin{NB}
This shows in particular that $\bar\nabla$ is the unique characteristic 
connection of $\bar M$ with respect to $J$. Furthermore, the theorem includes
the claim that the existence of a characteristic connection for the
almost contact metric structure on $(M,g,\phi, \eta)$ suffices to imply
that the induced almost hermitian structure on $\bar M$ does also
admit a   characteristic connection.
\end{NB}
We first prove
\begin{lem}\label{lem:normal}
 On $M$, Definition $\ref{def:normaldef}$ implies 
\be\label{eq:normal}
(\nabla_Y\phi)X=ag(Y,X)\xi-a\eta(X)Y,
\ee
and  we have
\begin{enumerate}
 \item[$a)$] $a\phi(X)=-\nabla_X\xi$\label{eq:lem:normal:1},
 \item[$b)$] $\xi$ is a Killing vector field, $g(\nabla_Y\xi,X)=-g(\nabla_X\xi,Y)$ and thus its integral curves are geodesics,\label{eq:lem:normal:2}
 \item[$c)$] $d\eta=2aF+\xi\lrcorner T$.
\end{enumerate}
\end{lem}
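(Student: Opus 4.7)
The plan is to first establish the displayed identity $(\nabla_Y\phi)X = ag(Y,X)\xi - a\eta(X)Y$, from which all three numbered claims will fall out by short arguments. Since $\nabla^c\phi = 0$, the difference tensor $\nabla - \nabla^c = -a\,(\eta\wedge F)(\cdot,\cdot,\cdot)^\sharp$ is the only source of a correction: expanding
\[
(\nabla_Y\phi)X \;=\; \nabla_Y(\phi X) - \phi(\nabla_Y X)
\]
and inserting the definition of $\nabla$ produces precisely $-a\bigl[(\eta\wedge F)(Y,\phi X,\cdot)^\sharp - \phi\bigl((\eta\wedge F)(Y,X,\cdot)^\sharp\bigr)\bigr]$. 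I would then expand each of these two vectors by testing against $Z$: for the first, $\eta(\phi X)=0$, together with $F(\phi X,Z)=g(X,Z)-\eta(X)\eta(Z)$ and $F(Y,\phi X)=-g(X,Y)+\eta(X)\eta(Y)$, collapses it to $\eta(Y)X - g(X,Y)\xi$; for the second, one reads off $-\eta(Y)\phi X+\eta(X)\phi Y+F(Y,X)\xi$ and then applies $\phi$, using $\phi^2=-\Id+\eta\otimes\xi$ and $\phi(\xi)=0$, to obtain $\eta(Y)X-\eta(X)Y$. Subtraction and the factor $-a$ give the claimed identity. This is the only step that requires careful bookkeeping; it is the main (and really the only) obstacle.

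For (a), I set $X=\xi$ in the identity: since $\phi(\xi)=0$, the left side becomes $-\phi(\nabla_Y\xi)$, and the right side is $a\eta(Y)\xi-aY$. Applying $\phi$ again and using $\phi^2=-\Id+\eta\otimes\xi$ gives $\nabla_Y\xi = -a\phi(Y)+\eta(\nabla_Y\xi)\,\xi$. The extra term vanishes because $\nabla$ is metric: $0 = Y\,g(\xi,\xi) = 2\,\eta(\nabla_Y\xi)$. Hence $\nabla_Y\xi = -a\phi(Y)$.

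For (b), I substitute (a) into $g(\nabla_Y\xi,X)+g(\nabla_X\xi,Y) = -aF(X,Y)+aF(X,Y)=0$, using $F(X,Y)=g(X,\phi Y)=-g(\phi X,Y)$. To pass from this $\nabla$-skew-symmetry to $\xi$ being a genuine Killing field for $g$, I write $\nabla = \nabla^g + \tfrac12 T$ and note that the torsion contribution $\tfrac12\bigl(T(Y,\xi,X)+T(X,\xi,Y)\bigr)$ vanishes by total skew-symmetry of $T$; hence the same skew-symmetry holds with $\nabla^g$. The geodesic claim for $\xi$ is immediate: $\nabla_\xi\xi = -a\phi(\xi)=0$, and $\nabla_\xi\xi = \nabla^g_\xi\xi + \tfrac12 T(\xi,\xi)=\nabla^g_\xi\xi$.

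For (c), I use the torsion formula $d\eta(X,Y) = (\nabla_X\eta)(Y)-(\nabla_Y\eta)(X)+\eta(T(X,Y))$. Metricity of $\nabla$ gives $(\nabla_X\eta)(Y)=g(\nabla_X\xi,Y)$, so substituting (a) turns the first two terms into $-aF(Y,X)+aF(X,Y)=2aF(X,Y)$. The last term equals $T(X,Y,\xi)=(\xi\lrcorner T)(X,Y)$ by cyclic invariance of the $3$-form $T$, finishing the proof.
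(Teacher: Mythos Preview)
Your proof is correct and follows essentially the same route as the paper's. The paper asserts the main identity $(\nabla_Y\phi)X=ag(Y,X)\xi-a\eta(X)Y$ as a direct computation from $\nabla^c\phi=0$ and the definition of $\nabla$, whereas you spell this computation out in detail; for parts (a) and (c) your arguments coincide with the paper's almost line by line, and your explicit reduction in (b) from $\nabla$-skew-symmetry to the Levi-Civita Killing condition via the skewness of $T$ makes precise what the paper leaves implicit (the paper also points to \cite{FI02}, Theorem~8.2, as an alternative source for (b)).
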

\begin{proof}[Proof of Lemma $\ref{lem:normal}$]
Using the definition $\nabla=\nabla^c-a\eta\wedge F$ with the equation 
$\nabla^c\phi=0$, we directly compute $(\nabla_Y\phi)X=ag(Y,X)\xi-a\eta(X)Y$.
Identity (\ref{eq:normal}) and  $\phi(\xi)=0$ imply for $X\in TM$
\bdm
aX-ag(X,\xi)\xi\ =\ -(\nabla_X\phi)\xi\ =\ 
\nabla_X(\phi(\xi))-(\nabla_X\phi)\xi=\phi(\nabla_X\xi) .
\edm
Since $\nabla_X\xi\perp\xi$, applying $\phi$ yields
\bdm
a\phi(X)\ =\ \phi(aX-ag(X,\xi)\xi)\ =\ -\nabla_X\xi.
\edm
Since  $g(X,\phi(Y))=-g(\phi(X),Y)$, we can conclude from  
equation (\ref{eq:normal}) the statement b)
of the lemma, which is also  a consequence of 
Theorem 8.2 in \cite{FI02}.
For $X,Y \in TM$, we obtain with statement a) 
\begin{align*}
d\eta(X,Y)&\ =\ X\eta(Y)-Y\eta(X)-\eta([X,Y])\ =\ 
Xg(Y,\xi)-Yg(X,\xi)-g([X,Y],\xi)\\
&\ =\ g(\nabla_XY,\xi)+g(Y,\nabla_X\xi)-g(\nabla_YX,\xi)-g(X,\nabla_Y\xi)
-g([X,Y],\xi)\\
&\ =\ T(X,Y,\xi)-g(Y,a\phi(X))+g(X,a\phi(Y))\ =\ T(X,Y,\xi)+2aF(X,Y)
\end{align*}
which finishes the proof.
\end{proof}
\begin{proof}[Proof of Theorem $\ref{th:acs}$]
One easily checks that $\bar g(JX,JY)=\bar g(X,Y)$ for $X,Y\in T\bar{M}$ and 
thus $J$ is an almost hermitian structure.\\
 We have to show $\bar{\nabla}J=0$, meaning
 $0=\bar\nabla_Y(J(X))-J(\bar\nabla_YX)$. To do so, we distinguish the
 following cases:\\
If $X\in TM$, $X\perp \xi$ and $Y\in TM$ we have
\begin{align*}
\bar\nabla_Y(J(X))-J(\bar\nabla_YX)
&\ =\ -\bar\nabla(\phi(X))-J(\nabla_YX-\frac{1}{r}\bar g(Y,X)\del_r)\\
&\ =\ -\nabla_Y(\phi(X))+\frac{1}{r}\bar g(Y,\phi(X))
\del_r-J(\nabla_YX)+\frac{1}{ar^2}\bar g(Y,X)\xi\\
&\ =\ -(\nabla_Y\phi)(X)-\phi(\nabla_YX)+a^2rg(Y,\phi(X))\del_r-J(\nabla_YX)+ag(Y,X)\xi.
\end{align*}
With  identity (\ref{eq:normal}) and since $\eta(X)=0$, $\phi(\xi)=0$ we get
\begin{align*}
 \bar\nabla_Y(J(X))-J(\bar\nabla_YX)&\ =\ -a\eta(X)Y-\phi(\nabla_YX)
+a^2rg(Y,\phi(X))\del_r-J(\nabla_YX)\\
&\ =\ -\phi(\nabla_YX+ag(Y,\phi(X))\xi)-J(ag(Y,\phi(X))\xi+\nabla_YX),
\end{align*}
which is equal to zero if $\nabla_YX+ag(Y,\phi(X))\xi$ is perpendicular to 
$\xi$ and $\del_r$. Obviously it is perpendicular to $\del_r$. We have 
$g(\nabla_YX+ag(Y,\phi(X))\xi,\xi)=0$ if
\bdm
0\ =\ g(\nabla_YX,\xi)+g(Y,a\phi(X))\ =\ -g(X,\nabla_Y\xi)+g(Y,a\phi(X))
\ =\ g(X,a\phi(Y))+g(Y,a\phi(X))\ =\ 0.
\edm
If $X\in TM$, $X\perp\xi$ and $Y=\del_r$ we have 
$\bar\nabla_Y(J(X))-J(\bar\nabla_YX)=\frac{1}{r}J(X)-J(\frac{1}{r}X)=0$.\\
If $X=\xi$, $Y=\del_r$ we get 
\bdm
\bar\nabla_Y(J(X))-J(\bar\nabla_YX)
\ =\ \bar\nabla_{\del_r}(-ar\del_r)-J(\frac{1}{r}\xi)
\ =\ -a\del_r-ar\bar\nabla_{\del_r}\del_r+a\del_r=0.
\edm
Given $X=\xi$ and $Y=\xi$ we have 
\bdm
\bar\nabla_Y(J(X))-J(\bar\nabla_YX)
\ =\ -\bar\nabla_\xi(ar\del_r)-J(\nabla_\xi\xi-\frac{1}{r}\bar 
g(\xi,\xi)\del_r)\ =\ -a\xi+a\xi=0.
\edm
If $X=\xi$, $Y\in TM$, $Y\perp\xi$ we have 
\bdm
\bar\nabla_Y(J(X))-J(\bar\nabla_YX)
\ = \ -\bar\nabla_Y(ar\del_r)-J(\nabla_Y\xi-\frac{1}{r}
\bar g(Y,\xi)\del_r)=-aY+J(a\phi(Y))\ =\ -aY+aY\ =\ 0.
\edm
Given $X=\del_r$, $Y\perp\xi$, $Y\in TM$ we get 
\bdm
\bar\nabla_Y(J(X))-J(\bar\nabla_YX)\ =\ 
\bar\nabla_Y(\frac{1}{ar}\xi)-J(\frac{1}{r}Y)\ =\ 
-\frac{1}{ar}a\phi(Y)-J(\frac{1}{r}Y)=0.
\edm
In the case $X=\del_r$ and $Y=\xi$ we have 
\bdm
\bar\nabla_Y(J(X))-J(\bar\nabla_YX)\ =\ 
\bar\del_\xi(\frac{1}{ar}\xi)-J(\frac{1}{r}\xi)\ = \ 
\frac{1}{ar}\nabla_\xi\xi-\frac{1}{ar^2}\bar g(\xi,\xi)\del_r
+a\del_r\ =\ -a\del_r+a\del_r=0.
\edm
The last case is given by $X=Y=\del_r$. Then we have 
$\bar\nabla_{\del_r}(\frac{1}{ar}\xi)=-\frac{1}{ar^2}\xi+\frac{1}{ar}\bar\nabla_{\del_r}\xi=0$.
\end{proof}
Let $(M,g)$ be a Riemannian manifold such that the above constructed 
manifold $(\bar M,\bar g)$ carries an almost hermitian structure $J$. We 
have $J(\del_r)\perp \del_r$. We consider the manifold 
$M=M\times\{1\}\subset \bar M$ and define for 
$X\in TM$: $\xi:=aJ(\del_r)$, $\eta(X):=g(X,\xi)$ and 
$\phi(X):=-J(X)+\bar g(J(X),\del_r)\del_r$.
We get an almost contact structure on $M$:
\begin{align*}
\phi^2(X)\ =\ &-J(-J(X)+\bar g(J(X),\del_r)\del_r)+\bar g(J(-J(X)
+\bar g(J(X),\del_r)\del_r),\del_r)\del_r\\
 =\ &-X+\bar g(X,J(\del_r))J(\del_r)=-X+g(X,\xi)\xi=-X+\eta(X)\xi
\end{align*}
and 
\begin{align*}
g(\phi(X),\phi(Y))&\ =\ 
\frac{1}{a^2}\bar g(-J(X)+\bar g(J(X),\del_r)\del_r,-J(Y)
+\bar g(J(Y),\del_r)\del_r)\\
&\ =\ \frac{1}{a^2}(\bar g(J(X),J(Y))-\bar g(X,J\del_r)\bar g(Y,J(\del_r)))
\ = \ g(X,Y)-\eta(X)\eta(Y).
\end{align*}
Conversely to Theorem \ref{th:acs}, one proves:
\begin{thm}
Consider the manifold $\bar M$ equipped with a connection $\bar \nabla$ with 
skew symmetric torsion $\bar T$ being the lift of a connection $\nabla$ with 
torsion $T$ on $M$. If the connection $\bar \nabla$ is almost complex on 
$\bar M$, we have $(\nabla_X\phi)(Y)=ag(X,Y)\xi-a\eta(Y)X$ and thus the 
characteristic connection $\nabla^c$ on $M=M\times\{1\}$ has 
torsion $T^c=T+2a\eta\wedge F$.
\end{thm}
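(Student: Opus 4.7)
The plan is to invert the case analysis from the proof of Theorem~\ref{th:acs}. Since $\bar\nabla$ is by hypothesis the lift of $\nabla$, the O'Neill-type formulas for $\bar\nabla_{\partial_r}\partial_r$, $\bar\nabla_X \partial_r$ and $\bar\nabla_X Y$ recalled in Section~\ref{ch:constrcone} are in force; combined with the explicit form of $J$ on the three summands $\R\partial_r$, $\R\xi$, $\xi^\perp$ of $T\bar M|_{M\times\{1\}}$, each vanishing $(\bar\nabla_Y J)(X)=0$ gives pointwise information on $M$.

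First take $X=\partial_r$ and $Y\in TM$ with $Y\perp\xi$. Since $J(\partial_r)=\tfrac{1}{ar}\xi$, the identity $\bar\nabla_Y(JX)=J(\bar\nabla_Y X)$ becomes, at $r=1$, $\tfrac{1}{a}\nabla_Y\xi=\tfrac{1}{a}J(Y)=-\tfrac{1}{a}\phi(Y)$, so $\nabla_Y\xi=-a\phi(Y)$; the case $Y=\xi$ gives the trivially compatible $\nabla_\xi\xi=0=-a\phi(\xi)$, so the formula holds for every $Y\in TM$.

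Next take $X,Y\in TM$ with $X\perp\xi$. Substituting $JX=-\phi X$ and the lift formula for $\bar\nabla_Y X$ into $(\bar\nabla_Y J)(X)=0$ and collecting the $TM$- and $\partial_r$-components separately, the $TM$-component yields $(\nabla_Y\phi)(X)=a\,g(Y,X)\xi$, which is (\ref{eq:normal}) for $X\perp\xi$; the $\partial_r$-component merely reproduces the previous step. Since $\phi(\xi)=0$, differentiating gives $(\nabla_Y\phi)(\xi)=-\phi(\nabla_Y\xi)=a\phi^2(Y)=a\eta(Y)\xi-aY$, which is (\ref{eq:normal}) for $X=\xi$. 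Linearity in $X$ via the decomposition $X=(X-\eta(X)\xi)+\eta(X)\xi$ then establishes (\ref{eq:normal}) in full generality.

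From (\ref{eq:normal}) we deduce at once that $\xi$ is a Killing vector field (the endomorphism $Y\mapsto\nabla_Y\xi=-a\phi(Y)$ is $g$-skew, and skew torsion does not affect the Killing condition), and, after a short algebraic check using the expression for the Nijenhuis tensor from the proof of Lemma~\ref{lem.contact-char-connection}, that $N$ is totally skew. By \cite[Thm 8.2]{FI02} the almost contact structure on $M$ therefore admits a unique characteristic connection $\nabla^c$. To identify it, set $\widehat\nabla_X Y:=\nabla_X Y+a\,(\eta\wedge F)(X,Y,\cdot)^\sharp$, which is metric with skew torsion $T+2a\,\eta\wedge F$. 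A direct computation using (\ref{eq:normal}), $\phi(\xi)=0$ and $\phi^2=-\Id+\eta\otimes\xi$ gives $\widehat\nabla\xi=0$ (hence $\widehat\nabla\eta=0$) and $\widehat\nabla\phi=0$; by uniqueness $\widehat\nabla=\nabla^c$, so $T^c=T+2a\,\eta\wedge F$. The main obstacle is organising the three-way splitting $T\bar M=\R\partial_r\oplus\R\xi\oplus\xi^\perp$ and the compatibility between $J$ on $\bar M$ and $\phi$ on $M$ so that the $TM$- and $\partial_r$-components can be cleanly separated; the algebraic verifications that $N$ is skew and that $\widehat\nabla$ preserves $(\phi,\eta)$, while routine, must be carried out with some care.
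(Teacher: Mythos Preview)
Your proposal is correct and follows precisely the route the paper indicates: the paper omits the proof entirely, writing only ``Conversely to Theorem~\ref{th:acs}, one proves'', so your inversion of the case analysis from Theorem~\ref{th:acs} is exactly what is intended. One minor remark: the intermediate verification that $N$ is skew (to invoke \cite[Thm~8.2]{FI02} for existence) is redundant, since exhibiting $\widehat\nabla$ as a metric connection with skew torsion satisfying $\widehat\nabla\phi=\widehat\nabla\eta=0$ already \emph{is} the characteristic connection, and its existence then implies $N$ skew and $\xi$ Killing a posteriori by the ``only if'' direction of the same theorem.
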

%
%
%
%
%
From now on we assume that $M$ and $\bar M$ admit an almost contact structure and
an almost hermitian structure, respectively, both admitting characteristic 
connections $\nabla^c$ and $\bar \nabla$ as introduced above.
%
%
\subsection{The classification of metric almost contact structures and the
  corresponding classification of almost hermitian structures on the cone}
%
We look at the classification of the geometric structures  on $\bar M$ and
$M$. We first prove the following two lemmata. 
\begin{lem}\label{lem:nijen}
The Nijenhuis tensor $\bar N$ of the almost hermitian structure on $\bar M$
restricted to $TM$ and the Nijenhuis tensor $N$ of the almost contact
structure on $M$ are related via $a^2r^2N=\bar N$. Furthermore, the following
conditions are equivalent:
\begin{itemize}
 \item $\del_r\lrcorner \bar N=0$,
 \item $d\eta(X,\phi Y)+d\eta(\phi X,Y)=0$ on $TM$,
 \item $\xi\lrcorner N=0$.
\end{itemize}
In particular $N=0$ if and only if $\bar N=0$.
\end{lem}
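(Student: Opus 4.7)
The approach is to write the almost hermitian Nijenhuis tensor $\bar N$ in its torsion-free bracket form
\begin{equation*}
\bar N(U, V) = [JU, JV] - J[JU, V] - J[U, JV] - [U, V],
\end{equation*}
and evaluate it separately on pairs $(U, V) \in TM \times TM$ (lifts) and on $(U, V) = (\del_r, Y)$ with $Y \in TM$. The key ingredients are (a) the explicit extension $J(X) = -\phi(X) - \eta(X)\, ar\, \del_r$ for $X \in TM$ and $J(\del_r) = \tfrac{1}{ar}\xi$; (b) the fact that lifts from $M$ to $\bar M$ satisfy $[\del_r, X] = 0$ and $[X, Y]_{\bar M} = [X, Y]_M$; and (c) the coordinate-free identity $d\eta(X, Y) = X(\eta Y) - Y(\eta X) - \eta([X, Y])$.

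For the first claim, I would expand $\bar N(X, Y)$ for $X, Y \in TM$. After invoking $\phi^2 = -\Id + \eta\otimes\xi$, the pure $TM$-bracket contributions reassemble as $N_\phi(X, Y) + d\eta(X, Y)\xi$, where $N_\phi$ denotes the classical Nijenhuis tensor of $\phi$. The leftover pieces (arising from the $\eta(X)\, ar\, \del_r$ part of $JX$) collapse into a single $\del_r$-term $ar(d\eta(\phi X, Y) + d\eta(X, \phi Y))\del_r$. A parallel torsion-free rewriting of the excerpt's formula for $N$ yields $N(X, Y) = N_\phi(X, Y) + d\eta(X, Y)\xi$, so
\begin{equation*}
\bar N(X, Y) = N(X, Y) + ar\bigl(d\eta(\phi X, Y) + d\eta(X, \phi Y)\bigr)\del_r.
\end{equation*}
Pairing with $Z \in TM$ via $\bar g$ (which scales the $TM$-part by $a^2 r^2$ and annihilates the $\del_r$-part) gives $\bar N(X, Y, Z) = a^2 r^2 N(X, Y, Z)$.

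For the three-way equivalence, the displayed formula identifies the $\del_r$-slot of $\bar N(X, Y)$ for $X, Y \in TM$ with the left-hand side of the middle condition. A separate bracket expansion with $(U, V) = (\del_r, Y)$ yields
\begin{equation*}
\bar N(\del_r, Y) = -\tfrac{1}{ar}(\mathcal{L}_\xi\phi)(Y) - d\eta(\xi, Y)\del_r,
\end{equation*}
so $\del_r \lrcorner \bar N = 0$ is equivalent to $\mathcal{L}_\xi\phi = 0$ together with $\xi \lrcorner d\eta = 0$. On the $M$-side, the identity $N(\xi, Y) = -\phi(\mathcal{L}_\xi\phi)(Y) + d\eta(\xi, Y)\xi$, which follows from $N = N_\phi + d\eta\otimes\xi$ and $\phi\xi = 0$, together with the observation that the $\xi$-component of $(\mathcal{L}_\xi\phi)(Y)$ equals $-d\eta(\xi, \phi Y)$, shows $\xi \lrcorner N = 0$ is equivalent to the same pair of conditions. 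Specializing the middle condition at $X = \xi$ recovers $\xi \lrcorner d\eta = 0$, and comparing with the displayed formula closes the equivalence. The final claim $N = 0 \Leftrightarrow \bar N = 0$ is then immediate: $N = 0$ forces $\xi \lrcorner N = 0$, hence the vanishing of the $\del_r$-component, while conversely $\bar N = 0$ kills both the $TM$-part (equal to $a^2 r^2 N$ on $TM$) and the $\del_r$-slot. The main obstacle is the careful bookkeeping in the bracket expansions — tracking $\xi$- and $\del_r$-contributions from the $\eta(X)\, ar\, \del_r$ piece of $J$ and from the derivative $\del_r(\tfrac{1}{ar}) = -\tfrac{1}{ar^2}$ — together with the delicate $\xi$-versus-$\xi^\perp$ decomposition that underpins the three-way equivalence.
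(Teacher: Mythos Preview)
Your approach is correct and genuinely different from the paper's. The paper never touches Lie brackets: it expresses $\bar N$ through the characteristic torsion $\bar T$ via
\[
\bar N(X,Y,Z)=\bar T(X,Y,Z)-\bar T(JX,JY,Z)-\bar T(JX,Y,JZ)-\bar T(X,JY,JZ),
\]
writes $N$ on $M$ analogously using identity $(\nabla_Y\phi)X=ag(Y,X)\xi-a\eta(X)Y$, and then compares the two via $\bar T(JX,Y,Z)=-a^2r^2T(\phi X,Y,Z)$. For the equivalences it uses $\xi\lrcorner T=d\eta-2aF$ together with the identity $N(X,Y,\xi)=d\eta(X,Y)-d\eta(\phi X,\phi Y)$ from \cite{FI02}. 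Your bracket computation is more elementary --- it never needs the characteristic connection as a tool --- and your intermediate characterization $\mathcal L_\xi\phi=0$, $\xi\lrcorner d\eta=0$ is clean and instructive. The paper's route, by contrast, gets the $a^2r^2$ scaling almost for free once the torsion formula is in place.

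There is one soft spot. You establish $(\partial_r\lrcorner\bar N=0)\Leftrightarrow(\xi\lrcorner N=0)$ cleanly via the common condition $\mathcal L_\xi\phi=0$ and $\xi\lrcorner d\eta=0$, but ``comparing with the displayed formula closes the equivalence'' does not actually link the middle condition to these. Your displayed formula identifies $d\eta(\phi X,Y)+d\eta(X,\phi Y)$ with the $\partial_r$-component of $\bar N(X,Y)$ for $X,Y\in TM$, which is the \emph{third}-slot insertion, whereas your $\bar N(\partial_r,Y)$ computation handles the \emph{first}-slot insertion; these are not a priori the same. One fix: from $N=N_\phi+d\eta\otimes\xi$ you get $N(X,Y,\xi)=d\eta(X,Y)-d\eta(\phi X,\phi Y)$ directly (compute $\eta(N_\phi(X,Y))=\eta[\phi X,\phi Y]=-d\eta(\phi X,\phi Y)$), and then the middle condition is equivalent to $N(\phi X,Y,\xi)=0$ once $\xi\lrcorner d\eta=0$. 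Since the standing hypothesis of the section is that $M$ admits a characteristic connection, $N$ is totally skew, so $N(\cdot,\cdot,\xi)=0$ is the same as $\xi\lrcorner N=0$. That completes the loop, and is essentially how the paper closes it too.
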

\begin{NB}\label{NB.others}
 In \cite{HTY12}, T.\,Houri, H.\,Takeuchi, and Y.\,Yasui considered 
hermitian manifolds $\bar M$ with a vanishing Nijenhuis tensor $\bar N$. 
They showed that in this case $N=0$ and thus $M$ is a normal almost 
contact manifold, which also is an immediate consequence of 
Lemma \ref{lem:nijen}. In \cite{Conti&M12}, D.\, Conti and Th.\,Madsen
investigated `Sasaki with torsion' manifolds, meaning normal
($N=0$) almost contact metric manifolds with $\xi$ a Killing vector field,
and their cones\,/\,cylinder; they obtained independently
the same result as Houri et.\,al. 
\end{NB}
\begin{NB}
Since $N=0$ if and only if $\bar N=0$, the condition $\bar N=0$ is sometimes 
used for the definition of an almost contact metric manifold to be 
normal (see for example \cite{CG90}).
\end{NB}
\begin{proof}[Proof of Lemma $\ref{lem:nijen}$]
Since we have
\bdm
\bar g((\bar\nabla^{\bar g}_XJ)Y,Z)\ =\ 
\bar g((\bar\nabla_XJ)Y+\frac{1}{2}(\bar JT(X,Y)-\bar T(X,JY)),Z)
\ = \ -\frac{1}{2}(\bar T(X,Y,JZ)+\bar T(X,JY,Z)),
\edm
the Nijenhuis tensor of $\bar M$ is given by
\begin{align*}
\bar N(X,Y,Z)&\ =\ \bar g((\bar\nabla^{\bar g}_XJ)(JY),Z)
-\bar g((\bar\nabla^{\bar g}_YJ)(JX),Z)+\bar g((\bar\nabla^{\bar
  g}_{JX}J)(Y),Z)-\bar g((\bar\nabla^{\bar g}_{JY}J)(X),Z)\\
&\ = \ \bar T(X,Y,Z)-\bar T(JX,JY,Z)-\bar T(JX,Y,JZ)-\bar T(X,JY,JZ),
\end{align*}
whereas the  Nijenhuis tensor on $M$ is
\begin{align*}
N(X,Y,Z)\ =\ &g((\nabla^g_X\phi)(\phi(Y))-(\nabla^g_Y\phi)(\phi(X))
+(\nabla^g_{\phi(X)}\phi)(Y)-(\nabla^g_{\phi(Y)}\phi)(X),Z)\\
&+\eta(X)g(\nabla^g_Y\xi,Z)-\eta(Y)g(\nabla^g_X\xi,Z).
\end{align*}
Identity (\ref{eq:normal}) implies
\bdm
 g((\nabla^g_X\phi)(Y),Z)\, =\, 
ag(X,Y)\eta(Z)-ag(X,Z)\eta(Y)-\frac{1}{2}(T(X,\phi(Y),Z)+T(X,Y,\phi(Z)))
\edm
and hence we obtain for $N(X,Y,Z)=$
\begin{align*}
&ag(X,\phi(Y))\eta(Z)- \frac{1}{2}T(X,\phi^2(Y),Z))
-\frac{1}{2}T(X,\phi(Y),\phi(Z))\\
&-ag(Y,\phi(X))\eta(Z)+ \frac{1}{2}T(Y,\phi^2(X),Z))
+\frac{1}{2}T(Y,\phi(X),\phi(Z))\\
&ag(\phi(X),Y)\eta(Z)-ag(\phi(X),Z)\eta(Y)
- \frac{1}{2}T(\phi(X),\phi(Y),Z))-\frac{1}{2}T(\phi(X),Y,\phi(Z))\\
&-ag(\phi(Y),X)\eta(Z)+ag(\phi(Y),Z)\eta(X)
+ \frac{1}{2}T(\phi(Y),\phi(X),Z))+\frac{1}{2}T(\phi(Y),X,\phi(Z))\\
&+\eta(X)g(\nabla^c_Y\xi,Z)-\frac{1}{2}\eta(X)T^c(Y,\xi,Z)
-\eta(Y)g(\nabla^c_X\xi,Z)+\frac{1}{2}\eta(Y)T^c(X,\xi,Z),
\end{align*}
which is the same as
\begin{align*}
=&\ T(X,Y,Z)-\frac{1}{2}\eta(Y)T(X,\xi,Z)-\frac{1}{2}\eta(X)T(\xi,Y,Z)
-T(X,\phi(Y),\phi(Z))\\
&-T(\phi(X),Y,\phi(Z))-T(\phi(X),\phi(Y),Z) -a\eta(Y)g(\phi(X),Z)
+a\eta(X)g(\phi(Y),Z)\\
&-\frac{1}{2}\eta(X)T(Y,\xi,Z)-\eta(X)aF(Z,Y)+\frac{1}{2}\eta(Y)T(X,\xi,Z)
+\eta(Y)aF(Z,X).
\end{align*}
For $X\in TM$ we have $J(X)+\eta(X)ar\del_r=J(X-\eta(X)\xi)
=-\phi(X-\eta(X)\xi)=-\phi(X)$. Since $\del_r\lrcorner \bar T=0$ for 
$X,Y,Z\in TM$ we  get 
\be\label{eq:JinT}
\bar T(J(X),Y,Z)= -a^2r^2T(\phi(X),Y,Z)
\ee
and also $\bar T(J(X),J(Y),Z)= a^2r^2T(\phi(X),\phi(Y),Z)$ etc. With this 
result we have
\bdm
N(X,Y,Z)=\frac{1}{a^2r^2}(\bar{T}(X,Y,Z)-\bar T(JX,JY,Z)-\bar T(JX,Y,JZ)
-\bar T(X,JY,JZ))
\edm
and thus we get the desired result $\bar N(X,Z,Z)=a^2r^2N(X,Y,Z)$ for $X,Y,Z\in TM$.\\
By definition of the Nijenhuis tensor we have $\del_r\lrcorner \bar N=0$ if 
and only if for $X,Y\in TM$  
\bdm
0=\bar T(\xi,JX,Y)+\bar T(\xi,X,JY) \Longleftrightarrow 0= T(\xi,\phi X,Y)+ T(\xi,X,\phi Y).
\edm
The relations  $\xi\lrcorner T=d\eta-2aF$ and 
\bdm
F(\phi X,Y)+F(X,\phi Y)\ =\ g(\phi X,\phi Y)+g(X,\phi^2Y)\ =\ 0
\edm
imply that
 $\del_r\lrcorner \bar N=0$ holds if and only if $d\eta(\phi X,Y)+d\eta(X,\phi Y)=0$.
In \cite{FI02} the identity $N(X,Y,\xi)=d\eta(X,Y)-d\eta(\phi X,\phi Y)$ is 
proved and we get
\bdm
N(\phi X,Y,\xi)\ =\ d\eta(\phi X,Y)+d\eta(X,\phi Y)-\eta(X)d\eta(\xi,\phi Y).
\edm
The identity $\xi\lrcorner T=d\eta-2aF$ implies $\xi\lrcorner d\eta=0$ and 
thus $d\eta(\phi X,Y)+d\eta(X,\phi Y)=0$ if and only if $N(\phi X,Y,\xi)=0$. 
Since $N$ is skew symmetric we have $N(\xi,Y,\xi)=0$ and thus 
$N(\phi X,Y,\xi)=0$ is equivalent to $\xi\lrcorner N=0$.
\end{proof}
\begin{lem}\label{lem:deltaO}
 For $Z\in T\bar M$ let $Z_M$ be the projection of $Z$ onto $TM$. Then we have
 \bdm
  \delta \Omega(Z)=-(\delta F -a(n-1)\eta)(Z_M).
 \edm
\end{lem}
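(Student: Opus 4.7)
The plan is to compute both $\delta \Omega(Z)$ and $\delta F(Z_M)$ explicitly in terms of the torsion $T$ of the intermediate connection $\nabla$ on $M$, and then compare.

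First, for $\delta \Omega$: since $\bar\nabla J = 0$ by Theorem~\ref{th:acs} and $\bar\nabla = \bar\nabla^{\bar g} + \frac{1}{2}\bar T$, a short computation using $\Omega(Y,Z) = \bar g(Y,JZ)$ yields
\bdm
(\bar\nabla^{\bar g}_X \Omega)(Y,Z)\ =\ \tfrac{1}{2}\bar T(X,Y,JZ)+\tfrac{1}{2}\bar T(X,JY,Z).
\edm
I would then take the local orthonormal frame $e_i := \tfrac{1}{ar}X_i$ for $i=1,\ldots,n$ and $e_{n+1}:=\partial_r$ on $\bar M$, with $X_n := \xi$ without loss of generality. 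Using $\delta\Omega(Z) = -\sum_i (\bar\nabla^{\bar g}_{e_i}\Omega)(e_i,Z)$, the first summand is killed by skew symmetry of $\bar T$ in its first two arguments. For the remaining sum, $Je_i = -\phi(X_i)/(ar)$ when $X_i \perp \xi$, while $J e_n = -\partial_r$ and $J\partial_r = \xi/(ar)$; since $\partial_r \lrcorner \bar T = 0$ all contributions from the last two slots vanish, and the relation $\bar T(X,Y,Z) = a^2r^2 T(X,Y,Z)$ on $TM$ together with $\partial_r \lrcorner \bar T = 0$ gives
\bdm
\delta\Omega(Z)\ =\ \tfrac{1}{2}\sum_{i=1}^{n} T(X_i,\phi X_i, Z_M).
\edm

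Second, for $\delta F$: by Lemma~\ref{lem:normal}, $(\nabla_Y \phi)(X) = ag(Y,X)\xi - a\eta(X)Y$, and since $\nabla^g = \nabla - \tfrac{1}{2}T$ we obtain
\bdm
(\nabla^g_Y \phi)(X)\ =\ ag(Y,X)\xi - a\eta(X)Y - \tfrac{1}{2}T(Y,\phi X) + \tfrac{1}{2}\phi T(Y,X).
\edm
Using $(\nabla^g_Y F)(X,Z) = g(X, (\nabla^g_Y\phi)Z)$, summing $-\sum_i (\nabla^g_{X_i} F)(X_i, Z)$ over the frame $\{X_i\}$ on $M$, noting that $g(X_i, T(X_i, \phi Z)) = 0$ by skew symmetry and that $g(X_i, \phi T(X_i, Z)) = T(X_i, \phi X_i, Z)$, a direct count gives
\bdm
\delta F(Z)\ =\ a(n-1)\eta(Z) - \tfrac{1}{2}\sum_{i=1}^{n} T(X_i,\phi X_i, Z).
\edm

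Combining these two identities, evaluated at $Z_M$, yields $\delta\Omega(Z) = -(\delta F - a(n-1)\eta)(Z_M)$, as required. The only delicate point is the careful bookkeeping in step one, namely ensuring that every contribution involving $\partial_r$ or with $Je_i$ landing in the $\partial_r$ direction is correctly eliminated via $\partial_r \lrcorner \bar T = 0$; after that, the computation of $\delta F$ is a routine application of Lemma~\ref{lem:normal}.
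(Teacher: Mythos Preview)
Your proof is correct and follows essentially the same route as the paper: both compute $(\bar\nabla^{\bar g}_X\Omega)(Y,Z)=\tfrac{1}{2}(\bar T(X,JY,Z)+\bar T(X,Y,JZ))$, trace over the adapted orthonormal frame, and reduce $\delta\Omega(Z)$ to $\tfrac{1}{2}\sum_i T(X_i,\phi X_i,Z_M)$. The only minor difference is in the second step: the paper substitutes $T=T^c-2a\,\eta\wedge F$ and uses the implicit identity $\delta F=-\tfrac{1}{2}\sum_i T^c(e_i,\phi e_i,\cdot)$ (coming from $\nabla^cF=0$), whereas you compute $\delta F$ directly via Lemma~\ref{lem:normal} and stay with the intermediate torsion $T$ throughout --- a slightly cleaner bookkeeping, but the same argument.
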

\begin{proof}
 For $X,Y,Z\in T\bar M$ we have 
\bdm
(\bar\nabla^{\bar g}_X\Omega)(Y,Z)
=(\bar\nabla_X\Omega)(Y,Z)-\Omega(-\frac{1}{2}\bar{T}(X,Y),Z)
-\Omega(Y,-\frac{1}{2}\bar{T}(X,Z))=\frac{1}{2}(\bar T(X,JY,Z)+\bar T(X,Y,JZ)).
\edm
For a local ONB $\{e_1,..,e_n=\xi\}$ of $TM$ we get the local 
ONB $\{\bar e_1=\frac{1}{ar}e_1,..,\bar e_n
=\frac{1}{ar}e_n,\bar e_{n+1}=\del_r\}$ of $T\bar M$. In this basis 
and for $Z\in T\bar M$ we compute
\bdm
\delta \Omega(Z)=-\sum_{i=1}^{n+1}(\bar \nabla^{\bar g}_{\bar e_i}\Omega)
(\bar e_i,Z)= -\frac{1}{2}\sum_{i=1}^{n-1}
\bar T(\frac{1}{ar}e_i,\frac{1}{ar}Je_i,Z)
- \frac{1}{2}\bar T(\frac{1}{ar}\xi,-\del_r,Z)- \frac{1}{2}\bar T(\del_r,J\del_r,Z).
\edm
Since $\del_r\lrcorner \bar T=0$, with equation (\ref{eq:JinT}) and the 
fact that $\phi(e_n)=0$, we have
\begin{align*}
\delta \Omega(Z)&=\frac{1}{2}\sum_{i=1}^{n-1}T(e_i,\phi e_i,Z_M)
=\frac{1}{2}\sum_{i=1}^{n-1}(T^c(e_i,\phi e_i,Z_M)
-2a(\eta\wedge F)(e_i,\phi e_i,Z_M))\\
&=\frac{1}{2}\sum_{i=1}^{n-1}(T^c(e_i,\phi e_i,Z_M)
-2a\eta(Z_M)F( e_i,\phi e_i))=\frac{1}{2}\sum_{i=1}^{n}T^c(e_i,\phi e_i,Z_M)
+a\eta(Z_M)(n-1)\\
&=-(\delta F-a(n-1)\eta)(Z_M),
\end{align*}
finishing the proof.
\end{proof}
We consider the Gray-Hervella classification \cite{GH80} of almost hermitian 
structures, given in Section \ref{sec:som}. 
Since we want to work with characteristic connections, we will only consider 
structures of class $\chi_1\oplus\chi_3\oplus\chi_4$. We first translate the 
conditions of this classification for the almost hermitian structure on $\bar
M$ to conditions of the almost contact structure on $M$. For the discussion of
the classification of almost contact structures and the correspondences 
to the classification of almost hermitian structures see Theorem \ref{thm:acsvsacs}. 
\begin{thm}\label{thm:ocsclass}
We have the following correspondence between Gray-Hervella classes of almost
hermitian structures on the cone $\bar M$ and defining relations of
almost contact metric structures on $M$:

\medskip
\begin{center}
\begin{tabular}{|c|c|c|}
\hline
 Class of $\bar M$ & defining relation on $\bar M$ & corresponding relation on $M$\\
\hline
 K\"ahler & $\bar\nabla^{\bar g}J=0$& $(\nabla^g_XF)(Y,Z)=a\eta(Y)g(X,Z)$\\
 &&$-a\eta(Z)g(X,Y)$\\
\hline 
$\chi_3$&  $\delta \Omega=\bar N=0$ &$N=0$, $\delta F=a(n-1)\eta$\\
\hline
& $(\bar\nabla^{\bar g}_X\Omega)(Y,Z)=\frac{-1}{n-1}[\bar g(X,Y)\delta \Omega(Z)$
&$(\nabla^g_XF)(Y,Z)=\frac{\delta F(\xi)}{n-1}(g(X,Z)\eta(Y)$\\
$\chi_4$ &$-\bar g(X,Z)\delta \Omega(Y)-\bar g(X,JY)\delta \Omega(JZ)$&$-g(X,Y)\eta(Z))$ \\
&$+\bar g(X,JZ)\delta \Omega(JY)]$& \\
\hline
$\chi_1\oplus \chi_3$ &$\delta \Omega=0$&$\delta F=a(n-1)\eta$\\
\hline
$\chi_3\oplus \chi_4$&$\bar N=0$&$N=0$
\\
\hline
\end{tabular}
\end{center}
\medskip

Furthermore, a structure on $\bar M$ is never nearly K\"ahler (of class
$\chi_1$) 
nor of mixed class $\chi_1\oplus\chi_4$.
\end{thm}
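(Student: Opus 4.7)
The plan is to translate each defining relation on $\bar M$ into a condition on $M$ via four ingredients: (a) the identity $\bar g((\bar\nabla^{\bar g}_X J)Y, Z) = -\tfrac{1}{2}[\bar T(X, JY, Z) + \bar T(X, Y, JZ)]$, established in the proof of Lemma \ref{lem:nijen}; (b) the fact that $\bar T$ is the lift of $T$, so $\del_r \lrcorner \bar T = 0$ and $\bar T(X,Y,Z) = a^2 r^2 T(X,Y,Z)$ for $X, Y, Z \in TM$; (c) Lemmas \ref{lem:nijen} and \ref{lem:deltaO}, which translate $\bar N$ and $\delta\Omega$ into $N$ and $\delta F - a(n-1)\eta$ respectively; and (d) the identity (\ref{eq:normal}) together with $T = T^c - 2a\eta \wedge F$, which yield by a short calculation $(\nabla^g_X F)(Y, Z) = -\tfrac{1}{2}[T^c(X, \phi Z, Y) + T^c(X, Z, \phi Y)]$.

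For the K\"ahler row, ingredient (a) forces $\bar T = 0$, hence $T = 0$ and $T^c = 2a\eta \wedge F$; substituting into (d) produces the displayed formula. The rows $\chi_3$, $\chi_1 \oplus \chi_3$, and $\chi_3 \oplus \chi_4$ follow directly from Lemmas \ref{lem:nijen} and \ref{lem:deltaO}. For $\chi_4$, I would substitute $JY = -\phi Y - \eta(Y) ar \del_r$ (and similarly for $JZ$) into the defining equation, rewrite $\delta\Omega$ and $\delta\Omega \circ J$ via Lemma \ref{lem:deltaO}, and verify that restricting all arguments to $TM$ reproduces the stated relation on $M$.

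For the last statement, strict $\chi_1$ (nearly K\"ahler) is characterized by $\bar T(X, JX, Z) = 0$ for all $X, Z$. Testing with $X = c\,\del_r + V$ for $V \in TM$, $V \perp \xi$ and using $\del_r \lrcorner \bar T = 0$, the condition splits (according to powers of $c$) into $T(V, \xi, Z) = 0$ and $T(V, \phi V, Z) = 0$. The first, combined with $T = T^c - 2a\eta \wedge F$ and $\xi \lrcorner F = 0$, forces $\xi \lrcorner T^c = 2aF$; the second, evaluated at $Z = \xi$, yields $T^c(V, \phi V, \xi) = -2a g(V, V)$. But under $\xi \lrcorner T^c = 2aF$ one computes $T^c(V, \phi V, \xi) = -2a F(V, \phi V) = 2a g(V, V)$ for $V \perp \xi$, contradicting $a > 0$ whenever $V \neq 0$. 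For strict $\chi_1 \oplus \chi_4$, I would run the analogous argument on the $W_1$-projection of $\bar\nabla^{\bar g}\Omega$ in the Gray--Hervella decomposition: the cone-induced $\del_r$-test again yields an incompatibility of the $W_1$-part with $a > 0$.

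The main technical obstacles will be the careful $\chi_4$ translation, where the $\del_r$-contributions to $\delta\Omega$ and $\delta\Omega \circ J$ must be tracked cleanly, and the explicit identification of the $W_1$-projection formula required to finish the $\chi_1 \oplus \chi_4$ exclusion; the remaining rows reduce to routine applications of the lemmas.
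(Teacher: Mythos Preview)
Your approach for the K\"ahler, $\chi_3$, $\chi_1\oplus\chi_3$, and $\chi_3\oplus\chi_4$ rows matches the paper's, and your outline for $\chi_4$ (case-splitting according to which arguments equal $\del_r$, then reducing via Lemma~\ref{lem:deltaO}) is exactly what the paper does, though note that you must check the cases with $\del_r$ in one or two slots and verify that the resulting auxiliary conditions ($\delta F\circ\phi=0$ and $d\eta=\tfrac{2\delta F(\xi)}{n-1}F$) are already implied by the displayed formula on $M$; this is not automatic.

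Your exclusion argument for $\chi_1$ contains a sign error and does not produce a contradiction. From $\xi\lrcorner T^c=2aF$ one gets
\[
T^c(V,\phi V,\xi)=T^c(\xi,V,\phi V)=(\xi\lrcorner T^c)(V,\phi V)=2aF(V,\phi V)=-2a\,g(V,V),
\]
which \emph{agrees} with the value $-2a\,g(V,V)$ coming from $T(V,\phi V,\xi)=0$; there is no inconsistency. In fact the conditions you extract ($\xi\lrcorner T=0$ and $T(V,\phi V,Z)=0$) are perfectly compatible with $a>0$.

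The paper's route for the last statement is different and relies on an external ingredient you are missing. One treats $\chi_1\oplus\chi_4$ directly (which covers $\chi_1$ as a subcase): writing the $\chi_1\oplus\chi_4$ relation with $X=V+b\,\del_r$ and separating powers of $b$ yields $\delta F\circ\phi=0$ and $\xi\lrcorner T=2(\tfrac{\delta F(\xi)}{n-1}-a)F$, hence $d\eta=\tfrac{2\delta F(\xi)}{n-1}F$ and therefore $dF=0$. At this point one invokes \cite[Thm.~8.4]{FI02}, which asserts that for a metric almost contact structure with characteristic connection, $dF=0$ forces $N=0$. Thus the structure lies in $\chi_3\oplus\chi_4$ as well, hence in $\chi_4$, and no genuine $\chi_1$ component survives. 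Without this external result (or an equivalent argument showing $N=0$ from $dF=0$), the exclusion cannot be completed by the kind of local sign-contradiction you attempt.
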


%
\begin{proof}
We have
\bdm
a\eta(Y)g(X,Z)-a\eta(Z)g(X,Y)=a\eta\wedge F(X,Y,\phi Z)+a\eta\wedge F(X,\phi Y,Z).
\edm

\underline{K\"ahler case}:
Since the characteristic connection on $\bar M$ is unique, we have the
following equivalences
\bdm
\bar\nabla^{\bar g}J=0\Leftrightarrow \bar\nabla^{\bar g}=\bar \nabla
\Leftrightarrow\bar T=0\Leftrightarrow T=0 \Leftrightarrow T^c=2a\eta\wedge F.
\edm
For a metric connection $\tilde \nabla$ with skew symmetric torsion 
$\tilde T$ on $M$ one calculates
\bdm
(\tilde \nabla_XF)(Y,Z)\ = \ 
(\nabla^g_XF)(Y,Z)-\frac{1}{2}\tilde T(X,\phi Y,Z)-\frac{1}{2}\tilde T(X,Y,\phi Z).
\edm
Thus, $T^c=2a\eta\wedge F$ implies 
$(\nabla^g_XF)(Y,Z)=a\eta\wedge F(X,Y,\phi Z)+a\eta\wedge F(X,\phi Y,Z)$ and 
conversely the condition $(\nabla^g_XF)(Y,Z)=a\eta\wedge F(X,Y,\phi Z)
+a\eta\wedge F(X,\phi Y,Z)$ yields
\bdm
(\tilde \nabla_XF)(Y,Z)=(a\eta\wedge F-\frac{1}{2}\tilde T)(X,\phi Y,Z)
+(a\eta\wedge F-\frac{1}{2}\tilde T)(X,Y,\phi Z).
\edm
The uniqueness of the characteristic connection $\nabla^c$ on $M$ thus 
implies $T^c=2a\eta\wedge F$.\\

\underline{Case $\chi_3$}: Consider an almost hermitian structure on $\bar M$ 
of class $\chi_3$ defined by $\delta \Omega=\bar N=0$. 
With Lemma \ref{lem:nijen} and \ref{lem:deltaO} we have $\bar N=\delta
\Omega=0$ if and only if $N=0$ and $\delta F-a(n-1)\eta=0$.\\

\underline{Case $\chi_4$}: The defining relation for the class $\chi_4$ of an 
almost hermitian manifold $\bar M$
\bdm
(\bar\nabla^{\bar g}_X\Omega)(Y,Z)=\frac{-1}{n-1}[\bar g(X,Y)\delta \Omega(Z)-\bar g(X,Z)\delta \Omega(Y)-\bar g(X,JY)\delta \Omega(JZ)+\bar g(X,JZ)\delta \Omega(JY)]
\edm
 translates with Lemma \ref{lem:deltaO} for $X,Y,Z\in T\bar M$ into 
\begin{align*}
&\frac{1}{2}\bar T(X,Y,JZ)+\frac{1}{2}\bar T(X,JY,Z)=\\
&\frac{1}{n-1}[\bar{g}(X,Y)(\delta F(Z_M)-a(n-1)\eta(Z_M))
-\bar{g}(X,Z)(\delta F(Y_M)-a(n-1)\eta(Y_M))\\
&-\bar{g}(X,JY)(\delta F((JZ)_M)-a(n-1)\eta((JZ)_M))
+\bar{g}(X,JZ)(\delta F((JY)_M)-a(n-1)\eta((JY)_M))].
\end{align*}
For $X\in T\bar M$ we have $\bar g(\del_r,JX)=-\bar g(J\del_r,X)
=-\bar g(J\del_r,X_M)=-a^2r^2g(\frac{1}{ar}\xi,X_M)=-ar\eta(X_M)$ and 
for $X\in TM$ we have $(JX)_M=-\phi X$.\\
In the case where $X=\del_r$ and $Y,Z\in TM$, the defining relation is equivalent to
\bdm
0=ar\eta(Y)(\delta F(-\phi Z)-a(n-1)\eta(-\phi Z))
-ar\eta(Z)(\delta F(-\phi Y)-a(n-1)\eta(-\phi Y)).
\edm
This is satisfied if and only if $0=(\eta(Y)\delta F(\phi Z)
-\eta(Z)\delta F(\phi Y))=\eta\wedge (\delta F\circ \phi)(Y,Z)$. Taking
$Y=\xi$ we receive the condition $F\circ \phi=0$, which obviously is sufficient too.\\
If $X=Y=\del_r$, $Z\in TM$ the defining relation leads to
\bdm
0=\delta F(Z)-a(n-1)\eta(Z)-ar\eta(Z)(\delta F(\frac{1}{ar}\xi)-\frac{n-1}{r}),
\edm
which is the same as $0=\delta F(Z)-\eta(Z)\delta F(\xi)=-\delta F(\phi^2 Z)$, 
already being satisfied if $\delta F\circ \phi=0$.\\
The case $Y=Z=\del_r$ leads to $0=0$.\\
Given $Y=\del_r$ and $X,Z\in TM$ we get
\bdm
\frac{1}{2ar}\bar T(X,\xi,Z)=\frac{1}{n-1}[ar\eta(X)\delta F(-\phi (Z))
-a^2r^2F(X,Z)(\delta F(\frac{1}{ar}\xi)-a(n-1)\frac{1}{ar})].
\edm
Since we already have the condition $\delta F\circ \phi=0$ this is equivalent to
\bdm
d\eta(X,Z)-2aF(X,Z)=(\xi\lrcorner T)(X,Z)=\frac{2}{n-1}F(X,Z)(\delta F(\xi)-a(n-1)).
\edm
This is the same as $d\eta=\frac{2}{n-1}\delta F(\xi)F$.
At last we look at $X,Y,Z\in TM$. Again we already have $\delta F\circ \phi =0$
\bea[*]
\lefteqn{-\frac{1}{2}T(X,Y,\phi Z)-\frac{1}{2} T(X,\phi Y,Z)}\\
&=&\frac{1}{n-1}[g(X,Y)(\delta F(Z)-a(n-1)\eta(Z))-g(X,Z)(\delta F(Y)-a(n-1)\eta(Y))]\\
&=&g(X,Y)(\frac{\delta F}{n-1}-a\eta)(Z)-g(X,Z)(\frac{\delta F}{n-1}-a\eta)(Y).
\eea[*]
Furthermore we have
\bea[*]
-\frac{1}{2}(T(X,Y,\phi Z)+T(X,\phi Y,Z))&=&
-\frac{1}{2}\,T^c(X,Y,\phi Z)-\frac{1}{2}T^c(X,\phi Y,Z)
+a\eta(X)F(Y,\phi Z)+a\eta(Y)F(\phi Z,X)\\
& &+\,a\eta(X)F(\phi Y,Z)+a\eta(Z)F(X,\phi Y)\\
&=& -\,(\nabla^g_XF)(Y,Z)+a\eta(Y)g(\phi Z,\phi X)+a\eta(Z)g(X,\phi^2 Y)\\
&=& -\,(\nabla^g_XF)(Y,Z)+a\eta(Y)g(Z,X)-a\eta(Z)g(X,Y).
\eea[*]
Thus we get the equation
\bdm
(\nabla^g_XF)(Y,Z)=g(X,Z)\frac{\delta F}{n-1}(Y)-g(X,Y)\frac{\delta F}{n-1}(Z).
\edm
Since $\delta F\circ\phi=0$ we have $\delta F=\delta F(\xi)\eta$ and obtain
\bdm
(\nabla^g_XF)(Y,Z)
\eta(Y)-g(X,Y)(\frac{\delta F(\xi)}{n-1}+2a)\eta(Z)\\
=\frac{\delta F(\xi)}{n-1}(g(X,Z)\eta(Y)-g(X,Y)\eta(Z)).
\edm
We summarize this result: An almost hermitian structure on $\bar M$, given 
by an almost contact structure on $M$ is of class $\chi_4$ if and only if
\bdm
(\nabla^g_XF)(Y,Z)=\frac{\delta F(\xi)}{n-1}(g(X,Z)\eta(Y)-g(X,Y)\eta(Z)), ~~
\delta F\circ \phi=0 ~\mbox{ and }~ d\eta=2\frac{\delta F(\xi)}{n-1}F.
\edm
The first condition implies the others:
For some local orthonormal basis $e_1,\ldots ,e_n=\xi$ of $TM$ we have 
\begin{align*}
 \delta F(X)&=-\sum_{i=1}^n (\nabla^g_{e_i}F)(e_i,X)
=-\sum_{i=1}^n \frac{\delta F(\xi)}{n-1}(g(e_i,X)\eta(e_i)-\eta(X))\\
&=-\frac{\delta F(\xi)}{n-1}(-n\eta(X)+\eta(X))=\delta F(\xi)\eta(X)
\end{align*}
and thus the condition $(\nabla^g_XF)(Y,Z)=\frac{\delta F(\xi)}{n-1}(g(X,Z)
\eta(Y)-g(X,Y)\eta(Z))$ implies $\delta F\circ\phi=0$. Since $\xi$ is 
a Killing vector field and thus 
$(\nabla^g_XF)(\xi,\phi Y)=-F(\nabla^g_X\xi,\phi Y)=g(\nabla^g_X\xi,Y)$ is 
skew symmetric in $X$ and $Y$ we have
\begin{align*}
 d\eta(X,Y)=(\nabla^g_X\eta)(Y)-(\nabla^g_Y\eta)(X)
=(\nabla^g_XF)(\xi,\phi Y)-(\nabla^g_YF)(\xi,\phi X)=2(\nabla^g_XF)(\xi,\phi Y)
\end{align*}
and with condition $(\nabla^g_XF)(Y,Z)=\frac{\delta F(\xi)}{n-1}(g(X,Z)
\eta(Y)-g(X,Y)\eta(Z))$ we already get $d\eta=2\frac{\delta F(\xi)}{n-1}F$.\\

\underline{Case $\chi_1\oplus \chi_3$}: The condition for a structure of class
$\chi_1\oplus \chi_3$ can be obtained directly from Lemma \ref{lem:deltaO}.\\

\underline{Case $\chi_3\oplus \chi_4$}: An almost hermitian structure on 
$\bar M$ is of class $\chi_3\oplus \chi_4$ if and only if $\bar N=0$. Due 
to Lemma \ref{lem:nijen}, this is equivalent to $N=0$.\\

\Kommentar{
\underline{Case $\chi_1$}: $\bar M$ is nearly K\"ahler if and only if 
$(\bar\nabla^{\bar g}_XJ)X=0$ for all $X\in T\bar M$. This is equivalent to 
\bdm
\bar T(X,J(X),Y)=0 \mbox{ for all } X,Y\in T\bar M.
\edm
For $X=V+b\del_r$ and 
$V,W\in TM$ we get
\bdm
\bar T(V,J(V),W)+\frac{b}{ar}\bar T(V,\xi,W)=0 \mbox{ for any } V,W\in TM, b\in \R.
\edm
Hence we have $\xi\lrcorner \bar T=\xi\lrcorner T=0$ and leading to 
$d\eta=2aF+\xi\lrcorner T=2aF$ and thus $dF=0$.\\ 
%
%
Theorem 8.4 of \cite{FI02} tells us that this implies $N=0$ and the structure 
on $\bar M$ is also of class $\chi_3\oplus\chi_4$, thus is integrable.\\}

\underline{Case $\chi_1\oplus \chi_4$}: The condition for an almost hermitian 
structure to be of class $\chi_1\oplus \chi_4$ is the same as for the class 
$\chi_4$, setting $X=Y$:
\begin{align*}
\frac{1}{2}\bar T(X,JX,Y)= &\frac{1}{n-1}[\bar{g}(X,X)(\delta F(Y_M)-a(n-1)\eta(Y_M))
-\bar{g}(X,Y)(\delta F(X_M)-a(n-1)\eta(X_M))\\
&+\bar{g}(X,JY)(\delta F((JX)_M)-a(n-1)\eta((JX)_M))].
\end{align*}
The equation is still linear in $Y$ but not in $X$. We set $X=V+b\del_r$ 
for $b\in \R$ and $V\in TM$:
\begin{align*}
\frac{1}{2}\bar T(V,JV,Y)+\frac{b}{2ar}\bar T(V,\xi,Y)= 
&\frac{1}{n-1}[(b^2+a^2r^2g(V,V))(\delta F(Y_M)-a(n-1)\eta(Y_M))\\
&-(b\bar{g}(\del_r,Y)+a^2r^2g(V,Y_M))(\delta F(V)-a(n-1)\eta(V))\\
&+(\bar{g}(V,JY)-bar\eta(Y_M))(-\delta F(\phi V)+\frac{b}{ar}\delta F(\xi)
-\frac{b(n-1)}{r})].
\end{align*}
This is satisfied for any $b$ if and only if
\begin{align*}
\frac{1}{2}\bar T(V,JV,Y)= &\frac{1}{n-1}[a^2r^2g(V,V)(\delta F(Y_M)-a(n-1)\eta(Y_M))\\
&-a^2r^2g(V,Y_M)(\delta F(V)-a(n-1)\eta(V))+\bar{g}(V,JY)(-\delta F(\phi V))]
\end{align*}
and
\be\label{eq:W1W42}
\begin{split}
\frac{1}{2ar}\bar T(V,\xi,Y)= &\frac{1}{n-1}[-\bar{g}(\del_r,Y)(\delta F(V)
-a(n-1)\eta(V))\\
&+\bar{g}(V,JY)(\frac{\delta F(\xi)}{ar}-\frac{(n-1)}{r})+ar\eta(Y_M)\delta F(\phi V)]
\end{split}
\ee
and
\bdm
0=\delta F(Y_M)-\eta(Y_M)\delta F(\xi)
=\delta F(Y_M-\eta(Y_M)\xi)=-\delta F(\phi^2(Y_M)),
\edm
where the last equation is satisfied  if and only if $\delta F\circ\phi=0$. 

%
For $Y\in TM$ with the condition $\delta F\circ\phi=0$ 
%
equation (\ref{eq:W1W42}) leads to
\bdm
\frac{1}{2}T(\xi,V,Y)=F(V,Y)(\frac{\delta F(\xi)}{n-1}-a).
\edm
Since  $\xi\lrcorner T=d\eta-2aF$ we have $d\eta=2\frac{\delta F(\xi)}{n-1}F$ 
and thus $dF=0$.\\
With Theorem 8.4 in \cite{FI02} this implies $N=0$ and the structure is
already of class $\chi_4$. Thus a structure is never of class $\chi_1$ or of 
mixed class $\chi_1\oplus\chi_4$.
\end{proof}

We now compare the result of Theorem \ref{thm:ocsclass} with the $12$ 
classes of almost contact structures given in Section \ref{sec:som}. As in 
the whole article we just consider manifolds admitting a characteristic 
connections (recall that Theorem \ref{thm:acscharconn} formulates the
criterion for its existence).
\begin{thm}\label{thm:acsvsacs}
If the almost hermitian structure on $\bar M$ is
\begin{itemize}
\item of class $\chi_3$, then the almost contact structure on $M$ is 
of class $\mathcal{C}_3\oplus..\oplus\mathcal{C}_8$ but not of class 
$\mathcal{C}_3\oplus\mathcal{C}_4\oplus\mathcal{C}_5\oplus\mathcal{C}_7
\oplus\mathcal{C}_8$ or of class $\mathcal{C}_6$.
\item of class $\chi_1\oplus\chi_3$, then the almost contact 
structure on $M$ is not of class  $\mathcal{C}_1\oplus..\oplus\mathcal{C}_5
\oplus\mathcal{C}_7\oplus..\oplus\mathcal{C}_{12}$ nor of class $\mathcal{C}_6$.
\end{itemize}
The almost hermitian structure on $\bar M$ is
\begin{itemize}
\item K\"ahler if and only if the almost contact structure on $M$ is 
$\alpha$-Sasaki (of class $\mathcal{C}_6$) and $\delta F(\xi)=a(n-1)$.
\item of class $\chi_4$ if and only if the almost contact structure 
on $M$ is an $\alpha$-Sasaki structure.
\item of class $\chi_3\oplus\chi_4$ if and only if the almost contact 
structure on $M$ is of class $\mathcal{C}_3\oplus..\oplus\mathcal{C}_8$ and 
there exists a characteristic connection.
\end{itemize}
Furthermore the structure on $M$ is Sasaki if and only if the almost 
hermitian structure on $\bar M$ is of class $\chi_4$ with 
$\delta\Omega(\xi)=(a-1)(n-1)$.
\end{thm}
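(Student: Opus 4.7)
The strategy is systematic comparison: translate each Gray--Hervella condition on $\bar M$ via Theorem \ref{thm:ocsclass} into a condition on the contact data of $M$, then match it against the defining relations of the twelve Chinea--Gonz\'alez classes listed in Section \ref{sec:som}. Lemma \ref{lem:deltaO} provides the dictionary between $\delta\Omega$ on the cone and $\delta F, \eta$ on $M$, which is the only extra ingredient needed for the Sasaki statement.

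First I would dispose of the three iff statements. For the $\chi_4$ characterization, Theorem \ref{thm:ocsclass} gives exactly
\bdm
(\nabla^g_X F)(Y,Z)\,=\,\frac{\delta F(\xi)}{n-1}\bigl(g(X,Z)\eta(Y)-g(X,Y)\eta(Z)\bigr),
\edm
which coincides verbatim with the defining relation of $\mathcal{C}_6$; hence $\bar M\in\chi_4\Leftrightarrow M$ is $\alpha$-Sasaki. The K\"ahler case is the specialization where the scalar $\delta F(\xi)/(n-1)$ equals $a$, i.e.\ $\delta F(\xi)=a(n-1)$. For the $\chi_3\oplus\chi_4$ equivalence, Theorem \ref{thm:ocsclass} reduces the condition to $\bar N=0$, which by Lemma \ref{lem:nijen} is the same as $N=0$, i.e.\ membership in the normal class $\mathcal{C}_3\oplus\ldots\oplus\mathcal{C}_8$. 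The Sasaki statement then follows: Sasaki $=$ $\alpha$-Sasaki $+\,\delta F(\xi)=n-1$, so by the $\chi_4$ equivalence $\bar M\in\chi_4$, and evaluating Lemma \ref{lem:deltaO} at $Z=\xi$ yields
\bdm
\delta\Omega(\xi)\,=\,-(\delta F(\xi)-a(n-1))\,=\,a(n-1)-(n-1)\,=\,(a-1)(n-1),
\edm
with the converse unwinding identically.

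For the two implication statements, note that both $\chi_3$ and $\chi_1\oplus\chi_3$ translate under Theorem \ref{thm:ocsclass} to a condition that includes $\delta F=a(n-1)\eta\neq 0$ (in the $\chi_3$ case together with $N=0$, which gives the normality). Thus $M$ cannot lie in any pure subclass in which the trace $\delta F(\xi)$ vanishes identically, which excludes $\mathcal{C}_3\oplus\mathcal{C}_4\oplus\mathcal{C}_5\oplus\mathcal{C}_7\oplus\mathcal{C}_8$ and, in the $\chi_1\oplus\chi_3$ case, $\mathcal{C}_1\oplus\ldots\oplus\mathcal{C}_5\oplus\mathcal{C}_7\oplus\ldots\oplus\mathcal{C}_{12}$; so a genuine $\mathcal{C}_6$ component must be present. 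Conversely, a pure $\mathcal{C}_6$ structure would force $\bar M\in\chi_4$ by the iff already proved, which intersects $\chi_3$ (resp.\ $\chi_1\oplus\chi_3$) only in the K\"ahler case, contradicting the pure $\chi_3$ (resp.\ $\chi_1\oplus\chi_3$) hypothesis.

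The main obstacle I anticipate is the bookkeeping in showing that $\delta F(\xi)=0$ in each of the Chinea--Gonz\'alez classes $\mathcal{C}_i$ with $i\neq 6$. Concretely one has to evaluate $\delta F(\xi)=-\sum_{i=1}^{n}(\nabla^g_{e_i}F)(e_i,\xi)$ in an adapted orthonormal frame $e_1,\ldots,e_{n-1},e_n=\xi$ using the defining relation of each class from the table, exploiting the symmetries in \eqref{eq:genforacs}--\eqref{eq:acsalphaphi} and the fact that $\alpha(\xi,\cdot,\cdot)=0$ in $\mathcal{C}_1\oplus\ldots\oplus\mathcal{C}_{10}$. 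This is a routine but tedious case-by-case inspection; once executed, all the exclusions of the theorem drop out immediately.
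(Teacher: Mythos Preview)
Your proposal is correct and follows essentially the same route as the paper's own proof: translate each Gray--Hervella class via Theorem~\ref{thm:ocsclass}, use that $\delta F(\xi)=0$ in every Chinea--Gonz\'alez class except $\mathcal{C}_6$ to obtain the exclusions, and invoke the $\chi_4\Leftrightarrow\mathcal{C}_6$ equivalence to rule out pure $\mathcal{C}_6$. The paper simply asserts the $\delta F(\xi)=0$ fact rather than spelling out the case-by-case verification you flag as the main obstacle, but otherwise the argument is identical.
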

\begin{proof}
If the structure on $\bar M$ is of class $\chi_3$, we have $N=0$ and thus 
the structure on $M$ is of class $\mathcal{C}_3\oplus..\oplus\mathcal{C}_8$. 
Furthermore, $\delta F(\xi)=a(n-1)$ holds, but on 
$\mathcal{C}_3\oplus\mathcal{C}_4\oplus\mathcal{C}_5\oplus\mathcal{C}_7
\oplus\mathcal{C}_8$ we have $\delta F(\xi)=0$ and a structure on $M$ of 
class $\mathcal{C}_6$ implies a structure on $\bar M$ of class $\chi_4$.\\
A structure on $\bar M$ of class $\chi_1\oplus\chi_3$ implies  on $M$ 
the relation $\delta F(\xi)\neq0$, but on 
$\mathcal{C}_1\oplus..\oplus\mathcal{C}_5\oplus\mathcal{C}_7\oplus..\oplus
\mathcal{C}_{12}$ we have $\delta F(\xi)=0$ and again a structure on $M$ of 
class $\mathcal{C}_6$ implies a structure on $\bar M$ of class $\chi_4$.\\
With Theorem \ref{thm:ocsclass}, a structure on $\bar M$ is K\"ahlerian if and 
only if $(\nabla^g_XF)(Y,Z)=a\eta(Y)g(X,Z)-a\eta(Z)g(X,Y)$
holds on $M$,
which is equivalent for the almost contact structure to be of class 
$\mathcal{C}_6$ with $\delta F(\xi)=a(n-1)$.\\
The condition of Theorem \ref{thm:ocsclass} for a  structure of class $\chi_4$
on $M$ is equivalent to the definition of an almost contact structure on 
$\bar M$ to be of class $\mathcal{C}_6$.\\
In 
$\mathcal{C}_3\oplus..\oplus\mathcal{C}_8$ we have $N=0$, which together 
with the existence of a characteristic connection is equivalent to the 
property that the structure on $\bar M$ is of class $\chi_3\oplus\chi_4$.\\
A structure on $M$ is Sasaki if and only if it is of class $\mathcal{C}_6$ 
and $\delta F(\xi)=n-1$. Due to Theorem \ref{thm:ocsclass} this is equivalent 
to the condition for the structure on $\bar M$ to be of class $\chi_4$ with 
$\delta\Omega(\xi)=(a-1)(n-1)$.
\end{proof}
\begin{NB}\label{rem:acsriemann}
If we construct $\bar M$ with $a=1$, we obtain a K\"ahlerian structure, and
 $(\nabla^g_XF)(Y,Z)=\eta(Y)g(X,Z)-\eta(Z)g(X,Y)$ defines a Sasakian structure
 on $M$. This is the classical case treated by B\"ar in \cite{B93}.
\end{NB}
%
%
\subsection{Corresponding spinors on metric almost 
contact structures and their cones}
We shall now work out in detail the abstract spinor correspondence
stated in Lemma \ref{lem:corrksps} for the case that $M$ carries a metric
almost contact structure. The following result serves as a preparation.
\begin{lem}\label{lem:liftofetawedgeF}
Given a metric almost contact structure with characteristic connection on $M$,
the lift of $\eta\wedge F$ to its cone $\bar M$ is given by
\bdm
\frac{1}{a^3r^3}(\del_r\lrcorner\Omega)\wedge\Omega.
\edm
\end{lem}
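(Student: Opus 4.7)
The plan is to express $\Omega$ explicitly as a $2$-form on $\bar M$ in terms of $F$, $\eta$ and $dr$, and then evaluate the right-hand side by direct computation. Pick a local $g$-orthonormal frame $e_1,\ldots,e_{n-1},e_n=\xi$ of $TM$; then $\bar e_i:=\frac{1}{ar}e_i$ for $i\le n$ together with $\bar e_{n+1}:=\del_r$ is a local $\bar g$-orthonormal frame of $T\bar M$, whose dual coframe is $\bar e^i = ar\, e^i$ for $i\le n$ (with $e^n=\eta$) and $\bar e^{n+1}=dr$.

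Using the defining relations $J(\del_r)=\frac{1}{ar}\xi$, $J(\xi)=-ar\del_r$, and $J(X)=-\phi(X)$ for $X\perp\xi,\del_r$, together with the identity $F(\xi,\cdot)=0$ (which is immediate from $\phi(\xi)=0$), a component-wise evaluation of $\Omega(\bar e_i,\bar e_j)=\bar g(\bar e_i,J\bar e_j)$ collapses, after re-expressing the coframe via $\bar e^i=ar\,e^i$, to
\[
\Omega \;=\; -a^2r^2\,F \;+\; ar\,\eta\wedge dr,
\]
where $F$ and $\eta$ are tacitly identified with their pullbacks to $\bar M$.

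From here the remaining two steps are immediate. Since neither $F$ nor $\eta$ contains a $dr$-factor, the Leibniz rule for the interior product gives $\del_r\lrcorner\Omega = -ar\,\eta$. Wedging with $\Omega$ and noting that $\eta\wedge\eta=0$ annihilates the second summand of $\Omega$, one obtains
\[
(\del_r\lrcorner\Omega)\wedge\Omega \;=\; (-ar\,\eta)\wedge(-a^2r^2\,F) \;=\; a^3r^3\,\eta\wedge F,
\]
and division by $a^3r^3$ yields the claim. There is no substantial obstacle; the only points demanding attention are sign- and scale-bookkeeping, namely keeping the factor $ar$ between the two coframes consistent and applying the correct sign in $\del_r\lrcorner(\eta\wedge dr)=(\del_r\lrcorner\eta)\,dr-\eta\,(\del_r\lrcorner dr)=-\eta$.
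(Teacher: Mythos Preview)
Your proof is correct and follows essentially the same direct computation as the paper: the paper first observes that $\del_r\lrcorner$ of the right-hand side vanishes and then shows $F=-\frac{1}{a^2r^2}\Omega$ and $\eta=-\frac{1}{ar}\,\del_r\lrcorner\Omega$ on $TM$, while you package the same identities into the global formula $\Omega=-a^2r^2F+ar\,\eta\wedge dr$ and compute from there. The two arguments are minor reorganizations of each other.
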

\begin{proof}
Since $\del_r\lrcorner[\frac{1}{a^3r^3}(\del_r\lrcorner\Omega)\wedge\Omega]=0$ 
we just need to show the equality on $TM$. For $X,Y\in TM$  we have
\bdm
F(X,Y)=g(X,\phi Y)=-\frac{1}{a^2r^2}\bar g(X,JY+\eta(Y)ar\del_r)=-\frac{1}{a^2r^2}\Omega(X,Y)
\edm
and
\bdm
\eta(X)=g(X,arJ\del_r)=\frac{1}{ar}\Omega(X,\del_r)
\edm
which proves $F=-\frac{1}{a^2r^2}\Omega$ and 
$\eta=-\frac{1}{ar}\del_r\lrcorner\Omega$ on $TM$.
\end{proof}
We recall the definition of the connections
\bdm 
\nabla^s_XY\, =\, \nabla^g_XY+2sT^c(X,Y) \mbox{ and } 
\bar\nabla^s_XY\, =\, \bar\nabla^{\bar g}_XY+2s\bar T(X,Y)
\edm
 for $s \in \R$ from Section \ref{ch:constrcone}.
Theorem \ref{th:acs} yields $T^c=T+2a\eta\wedge F$ and since $\bar T=a^2r^2T$
and $\overline{T^c}=a^2r^2T^c$, we get $\overline{T^c}-\bar T$ as the lift of 
$2a^3r^2\eta\wedge F$ to $\bar M$. With Lemma \ref{lem:liftofetawedgeF} we
obtain $\overline{T^c}-\bar T=\frac{2}{r}(\del_r\lrcorner\Omega)\wedge\Omega$.
\begin{thm}\label{th:ksacs}
Assume that the  almost contact metric  manifold $(M,g,\phi,\eta)$ admits
a characteristic connection and is spin. Then there is for 
$\alpha=\frac{1}{2}a$ or $\alpha=-\frac{1}{2}a$:
\begin{enumerate}
 \item A one to one correspondence between Killing spinors with torsion \label{thm:item1th}
\bdm
\nabla^s_X\psi=\alpha X\psi
\edm
on $M$  and parallel 
spinors of the connection $\bar\nabla^s+\frac{4s}{r}(\del_r\lrcorner\Omega)
\wedge\Omega$ on  $\bar M$ with cone constant $a$
\bdm
\bar\nabla^s_X\psi+\frac{2s}{r}(X\lrcorner(\del_r\lrcorner\Omega)\wedge\Omega)\psi=0,
\edm
\item A one to one correspondence between $\bar\nabla^s$-parallel spinors on $\bar M$ 
with cone constant $a$  and spinors on $M$ satisfying
\bdm
\nabla^s_X\psi-2asX\lrcorner(\eta\wedge F)\psi=\alpha X\psi.
\edm
\end{enumerate}
In particular, for $s=\frac{1}{4}$ we get the correspondence
\begin{center}
\begin{tabular}{|c|c|}
 \hline
spinors on $M$ & spinors on $\bar M$\\
\hline
$\nabla^c_X\psi=\alpha X\psi$ & $\bar\nabla_X\psi
=-\frac{1}{2r}X\lrcorner((\del_r\lrcorner\Omega)\wedge\Omega)\psi$\\
\hline
$\nabla^c_X\psi=\alpha X\psi+\frac{a}{2}X\lrcorner (\eta\wedge F)\psi$ 
& $\bar\nabla_X\psi=0$\\
\hline
\end{tabular}
\end{center}
\end{thm}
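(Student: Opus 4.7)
The proof is essentially a direct specialization of the abstract correspondence in Lemma \ref{lem:corrksps} to the metric almost contact setting, together with the explicit identification of the ``discrepancy tensors'' $T-T^c$ and $\bar T-\overline{T^c}$.

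The first step is to identify $T-T^c$ on $M$. By Definition \ref{def:normaldef} we have $T = T^c - 2a\,\eta\wedge F$, hence
\[
T-T^c \ =\ -2a\,\eta\wedge F.
\]
Substituted into the second row of Lemma \ref{lem:corrksps}, this immediately yields
\[
\nabla^s_X\psi - 2as\,X\lrcorner(\eta\wedge F)\psi \ =\ \alpha X\psi
\quad\Longleftrightarrow\quad
\bar\nabla^s_X\psi \ =\ 0,
\]
which is precisely the second correspondence in the theorem (and, for $s=1/4$, the second line of the final table).

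The second step is to compute $\bar T - \overline{T^c}$ as a $3$-form on $\bar M$. Since the lifting convention from Section~\ref{ch:constrcone} gives $\bar T = a^2 r^2 T$ (and likewise $\overline{T^c}=a^2 r^2 T^c$) when restricted to $TM$, we obtain
\[
\overline{T^c}-\bar T \ =\ a^2 r^2\,(T^c-T) \ =\ 2a^3 r^2\,\overline{\eta\wedge F}.
\]
Lemma \ref{lem:liftofetawedgeF} expresses the lift of $\eta\wedge F$ as $\tfrac{1}{a^3 r^3}(\del_r\lrcorner\Omega)\wedge\Omega$, and combining gives
\[
\overline{T^c}-\bar T \ =\ \frac{2}{r}\,(\del_r\lrcorner\Omega)\wedge\Omega,
\qquad
\bar T-\overline{T^c} \ =\ -\frac{2}{r}\,(\del_r\lrcorner\Omega)\wedge\Omega.
\]
Plugging this into the first row of Lemma \ref{lem:corrksps} transforms
$\bar\nabla^s_X\psi - s\,X\lrcorner(\bar T-\overline{T^c})\psi = 0$
into
\[
\bar\nabla^s_X\psi + \frac{2s}{r}\,X\lrcorner\bigl((\del_r\lrcorner\Omega)\wedge\Omega\bigr)\psi \ =\ 0,
\]
which is exactly the first correspondence of the theorem.

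Finally, the statement about Killing numbers $\alpha = \pm a/2$ is inherited directly from Lemma \ref{lem:ks} and the discussion of the cone constant in Section \ref{ch:constrcone} (the sign depending on the orientation chosen for $\bar M$). The $s=1/4$ specialization is then immediate. The only ``obstacle'' is bookkeeping the scaling factors $a^2 r^2$ correctly between $M$ and $\bar M$; this is where the explicit identities $F = -\tfrac{1}{a^2 r^2}\Omega$ and $\eta = -\tfrac{1}{ar}\del_r\lrcorner\Omega$ from the proof of Lemma \ref{lem:liftofetawedgeF} are indispensable, as they ensure that the two forms $\overline{\eta\wedge F}$ and $\tfrac{1}{a^3 r^3}(\del_r\lrcorner\Omega)\wedge\Omega$ agree not only on $TM$ but also contract trivially with $\del_r$, so that the lifted form has the correct normalization to enter the torsion formula.
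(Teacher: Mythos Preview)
Your proof is correct and follows essentially the same route as the paper: compute $T-T^c=-2a\,\eta\wedge F$ from Definition~\ref{def:normaldef}, lift via $\bar T=a^2r^2T$ and Lemma~\ref{lem:liftofetawedgeF} to obtain $\bar T-\overline{T^c}=-\tfrac{2}{r}(\del_r\lrcorner\Omega)\wedge\Omega$, and then specialize the two rows of Lemma~\ref{lem:corrksps}. The paper presents these identities in the paragraph immediately preceding the theorem and leaves the remaining substitution implicit, so your write-up is in fact slightly more detailed but otherwise identical in content.
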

\medskip

%
\begin{NB}\label{rem:chartorausschr}
 Since $\bar\nabla=\bar\nabla^{\bar g}+\frac{1}{2}\bar T$ is the
 characteristic connection of the almost hermitian structure on $\bar M$, we can write
\bdm
\bar T\ =\ \bar N+d\Omega^J,
\edm
where $d\Omega^J =d\Omega\circ J$. Thus one can rewrite all equations 
above. For example the correspondence (\ref{thm:item1th}) of Theorem 
\ref{th:ksacs} is given with spinors on $\bar M$ satisfying
\bdm
\bar\nabla^{\bar g}_X\psi
+sX\lrcorner[\bar
N+d\Omega^J+\frac{2}{r}(\del_r\lrcorner\Omega)\wedge\Omega]\psi\, =\, 0.
\edm
Equivalently, one can use the description of $T^c$ on $M$ given by 
$T^c=\eta\wedge d\eta+dF^\phi+N-\eta\wedge(\xi\lrcorner N)$ (\cite{FI02}) to
rewrite the second correspondence.
Note that this also implies that $\bar T=\bar N+d\Omega^J$ is the lift of 
\bdm
a^2r^2T=a^2r^2(T^c-2a\eta\wedge F)=a^2r^2(\eta\wedge
(d\eta-2aF)+dF^\phi+N-\eta\wedge(\xi\lrcorner N))
\edm
to $\bar M$, in particular we have $\del_r\lrcorner(\bar N+d\Omega^J)=0$. 
\end{NB}
\subsection{Examples}\label{subsection.exas}
%
In this Section, we shall discuss several examples of metric almost contact
structures and the special spinor fields that exist on them and on their
cones. In particular, we shall describe sereval situations where the cone carries
a parallel spinor field for the characteristic connection $\bar\nabla$
of its almost hermitian structure.
\begin{exa}
For a metric almost contact manifold $(M,g,\phi,\eta)$, the deformation
\bdm
g_t:=tg+(t^2-t)\eta\otimes\eta,~~ \xi_t:=\frac{1}{t}\xi,~~ \eta_t:=t\eta,
\quad t>0
\edm
is often used for different purposes and constructions. 
It was  introduced by Tanno \cite{T68}, which explains why it is either
called \emph{Tanno deformation} or \emph{D-homothetic deformation}.
It has the property that if the original manifold is K-contact or Sasaki, then
the deformed manifold $(M,g_t,\xi_t,\eta_t,\phi)$ has again this
property. 

In  \cite[Cor.2.18]{BB12} it was proved that any Sasakian $\eta$-Einstein manifold
(with certain weak relations between the curvature parameters) 
carries  Killing spinors with torsion,
while Einstein-Sasaki manifolds can never admit Killing spinors with 
non trivial torsion \cite{ABK12}. Since any $\eta$-Einstein manifold 
can be Tanno deformed into
an Einstein manifold (\cite{T67}, \cite{T68}), it is thus sufficient to restrict
our attention to Tanno deformations of Einstein-Sasaki manifolds.
It is well-known that these carry Riemannian Killing spinors  \cite{FK89}.

In \cite{BB12}, the  Killing spinors with torsion on the
 Tanno deformation of an
Einstein-Sasaki manifold $(M,g,\phi,\eta)$ of dimension $n=2k+1\geq 5$
are constructed as follows.
Consider the  one dimensional subbundles of the spinor bundle $\Sigma_t$ 
of $(M,g_t)$ defined by
\bdm
 L_1(\Sigma_t):=\{\psi\in\Sigma_t~|~\phi(X)\psi=-iX\psi~ 
\forall X\perp\xi\}, ~~~ L_2(\Sigma_t):=\{\psi\in\Sigma_t~|~\phi(X)\psi
=iX\psi~ \forall X\perp\xi\}.
\edm
Define $\epsilon=\pm 1$ to be the number satisfying 
$e_1\phi(e_1)...e_k\phi(e_k)\xi\psi=\epsilon i^{k+1}\psi$ for a local
orthonormal frame $e_1,\phi(e_1),..,e_k,\phi(e_k),\xi$ on $M$. 
Theorem $2.22$ from \cite{BB12} then states that the spinors $\psi_1\in L_1(\Sigma_t)$ 
and $\psi_2\in L_2(\Sigma_t)$ are Killing spinors with torsion for 
$s_t=\frac{k+1}{4(k-1)}(\frac{1}{t}-1)$ with Killing numbers 
\bdm\tag{$*$}
\beta_{1,t}=\frac{\epsilon}{2}\frac{2kt-(k+1)}{t(k-1)}
= \frac{\epsilon}{2}(1-4s_t) ~~~~~\mbox{ and }~~~~~ 
\beta_{2,t}= (-1)^{k+1}\beta_{1,t} 
\edm
respectively. For $t=1$, there is no deformation, and indeed
the parameter $s_t$ is then zero and the two spinors are just
classical Riemannian Killing spinors.
Since $(M,g_t,\xi_t,\eta_t,\phi)$ with fundamental $2$-form $F_t$ is Sasakian, 
the characteristic
torsion of $\nabla^c$ is given by $T^c = \eta_t\wedge d\eta_t=2\eta_t\wedge
F_t$. Thus, the Killing equation
\bdm
\nabla^{g_t}_X\psi_i + s_t (X \lrcorner T^c ) \psi_i\ =\ \beta_{i,t} X \psi_i,\quad
i=1,2
\edm
can equivalently be reformulated as 
\bdm
\nabla^{g_t}_X\psi_i + \frac{1}{4} (X \lrcorner T^c ) \psi_i -
(1-4 s_t)\frac{1}{4} (X \lrcorner T^c ) \psi_i\ =\ \beta_{i,t}X \psi_i.
\edm
If $1-4s_t=0$,  both Killing numbers $\beta_{i,t}$ vanish by equation $(*)$ 
and the Killing equation
is reduced to $\nabla^c\psi_i=0$ -- the spinor fields $\psi_i$ are
$\nabla^c$-parallel and, as observed before, the cone construction is
not possible. The condition   $1 - 4s_t>0$ is equivalent to $t>\frac{k+1}{2k}$
and we observe that in this case, the last equation is exactly of the
form treated in Theorem \ref{th:ksacs}, case (2) for $s=1/4$ and
$a=2|\beta_{i,t}| = 1 - 4s_t>0$. Recall that we know from Theorem
$\ref{thm:acsvsacs}$ that the cone $(\bar M, \bar g_t)$ of the Tanno
deformation is a locally conformally K\"ahler manifold (class $\chi_4$).
Hence, we can conclude from Theorem \ref{th:ksacs}, case (2):
\begin{thm}\label{thm.Tanno-deformation}
Let $(M,g,\phi,\eta)$ be an Einstein Sasaki manifold
of dimension $2k+1\geq 5$. Consider its Tanno deformation 
$(M,g_t,\xi_t,\eta_t,\phi)$  for  $t> \frac{k+1}{2k}$ and the cone
$(\bar M, \bar g_t, J_t)$, constructed with 
cone constant $a= 1 - 4s_t>0$, and endowed with the conformally
K\"ahler structure   described before.
Then the two Killing spinors with torsion on $(M,g_t,\xi_t,\eta_t,\phi)$
induce each a spinor on the cone $(\bar M, \bar g_t, J_t)$ that is
parallel with respect to its characteristic connection $\bar\nabla$.
\end{thm}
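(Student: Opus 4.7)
The plan is to recognize that the theorem is essentially a matching-of-parameters argument: the Killing equation of \cite{BB12}, after using the Sasakian structure to rewrite its torsion term, lines up perfectly with the hypothesis of Theorem \ref{th:ksacs}, case (2). First, I would use that $(M,g_t,\xi_t,\eta_t,\phi)$ is Sasakian, so its characteristic torsion satisfies $T^c = \eta_t\wedge d\eta_t = 2\eta_t\wedge F_t$. Substituting this into the Killing equation $\nabla^{g_t}_X\psi_i + s_t(X\lrcorner T^c)\psi_i = \beta_{i,t}X\psi_i$ and rewriting $s_t = \tfrac14 - \tfrac{1-4s_t}{4}$, one gets
\begin{equation*}
\nabla^c_X\psi_i \ =\ \beta_{i,t}\,X\psi_i \,+\, \tfrac{1-4s_t}{2}\,(X\lrcorner(\eta_t\wedge F_t))\,\psi_i.
\end{equation*}

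Second, I would compare this identity with the second row of the table in Theorem \ref{th:ksacs} (for $s=\tfrac14$), namely
\begin{equation*}
\nabla^c_X\psi \ =\ \alpha\,X\psi\, +\, \tfrac{a}{2}\,(X\lrcorner(\eta\wedge F))\,\psi.
\end{equation*}
The natural identification is $a := 1 - 4s_t$ and $\alpha := \beta_{i,t}$. The assumption $t > \tfrac{k+1}{2k}$ is exactly the condition that makes $a = 1 - 4s_t > 0$, so the cone construction is legitimate. Moreover, the explicit values $(*)$ from \cite{BB12} give $\beta_{1,t} = \tfrac{\epsilon}{2}(1 - 4s_t) = \tfrac{\epsilon a}{2}$ and $\beta_{2,t} = (-1)^{k+1}\beta_{1,t}$, so in both cases $\alpha = \pm\tfrac{a}{2}$, matching precisely the Killing numbers admitted by Theorem \ref{th:ksacs}. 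This compatibility is the only real thing to check; once it is verified, Theorem \ref{th:ksacs}, case (2), immediately produces a $\bar\nabla$-parallel spinor on $(\bar M,\bar g_t)$ corresponding to each $\psi_i$.

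Finally, I would invoke Theorem \ref{thm:acsvsacs}: since the Tanno-deformed structure is Sasakian, hence in class $\mathcal C_6$, the induced almost hermitian structure on the cone lies in class $\chi_4$, i.e.\ it is locally conformally K\"ahler, and $\bar\nabla$ (as constructed in Section \ref{ch:constrcone} from the connection $\nabla$ of Definition \ref{def:normaldef}) is its unique characteristic connection. The main obstacle, if any, is purely bookkeeping: one must make sure the sign conventions and the cone constant $a$ used in Theorem \ref{th:ksacs} agree with the specific positive value $a = 1-4s_t$ forced by the Killing numbers $\beta_{i,t}$; the hypothesis $t > \tfrac{k+1}{2k}$ is precisely what excludes the borderline case $a=0$ (i.e.\ $\beta_{i,t}=0$), in which the spinors $\psi_i$ would already be $\nabla^c$-parallel and the cone construction would not apply.
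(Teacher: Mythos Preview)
Your proposal is correct and follows essentially the same argument as the paper: rewrite the Killing equation from \cite{BB12} using $T^c=2\eta_t\wedge F_t$ so that the $\nabla^c$-term appears, match the resulting identity against the second row of Theorem~\ref{th:ksacs} for $s=\tfrac14$ with $a=1-4s_t$ and $\alpha=\beta_{i,t}=\pm\tfrac{a}{2}$, and invoke Theorem~\ref{thm:acsvsacs} to identify the cone structure as locally conformally K\"ahler. The paper's own exposition is organized identically, including the observation that $t>\tfrac{k+1}{2k}$ is exactly the condition $a>0$ needed for the cone construction.
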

Although Killing spinors with torsion do exist on $(M,g_t,\xi_t,\eta_t,\phi)$  
for  $0<t<\frac{k+1}{2k}$, Theorem \ref{th:ksacs}, case (2) cannot be applied
because the signs do not match. Of course, case (1) does still hold and
therefore we obtain a spinor field satisfying a more complicated equation on
$\bar M$. For $t=1$ (meaning $s_t=0$), Theorem \ref{thm.Tanno-deformation}
is the classical cone correspondence between Riemannian Killing spinors on 
Einstein-Sasaki manifolds and
Riemannian parallel spinors on their cone \cite{B93}.
\end{exa}
%
%
%
%
\begin{exa}
We shall now prove the existence of parallel spinors on the cone for a manifold
that is not Sasaki and that cannot be deformed into a manifold carrying
Riemannian Killing spinors.
The Heisenberg group $H$ is defined to be the following Lie subgroup of $\Gl(4,\R)$:
\bdm
H:=\left\{ \begin{bmatrix} 1 &u&v&z\\0&1&0&x\\0&0&1&y\\0&0&0&1 \end{bmatrix}~:~ u,v,x,y,z\in\R  \right\}.
\edm
The vector fields $u_1=\del_u$, $u_2=\del_x+u\del_z$, $u_3=\del_v$, 
$u_4=\del_y+v\del_z$, and $u_5=\del_z$ form a basis of the left invariant 
vector fields. For $\rho>0$ we  consider the metric (\cite{KV85})
\bdm
g_\rho \ =\ \frac{1}{\rho}(du^2+dx^2+dv^2+dy^2)+(dz-udx-vdy)^2
\edm
and get an orthonormal frame $e_1=\sqrt{\rho}u_1$, $e_2=\sqrt{\rho}u_2$, 
$e_3=\sqrt{\rho}u_3$, $e_4=\sqrt{\rho}u_4$ and $e_5=u_5$.
On $H$, there exists a left-invariant spin structure such that 
$e_1e_2e_3e_4e_5\psi=i\psi$ for all spinor fields $\psi$, which 
is the one we choose.
We consider the almost contact structures given by $\xi:=e_5$ and
the fundamental $2$-forms
\bdm
F_1:=e_1\wedge e_2-e_3\wedge e_4 \mbox{ and } F_2:= -(e_1\wedge e_2+e_3\wedge e_4).
\edm
It is a lengthy, but routine calculation to determine the
class of these metric almost contact structures. Together with
Theorem \ref{thm:acsvsacs}, the final result is:
\begin{lem}
\begin{enumerate}
\item[]
\item $(H,g_\rho)$ is never an Einstein manifold $\forall \rho>0$ and its Tanno
deformation is again a metric in the same family of metrics.
\item
The structure $F_1$ is of class $\mathcal{C}_7$ and the structure 
$F_2$ is of class $\mathcal{C}_6$ (for $\rho=2$, $F_2$ is Sasakian).
\item The almost hermitian structure on $\bar M$ induced by $F_1$ 
is hermitian (mixed class $\chi_3\oplus\chi_4$)
and the almost hermitian structure on $\bar M$ induced by $F_2$ 
is locally conformally K\"ahler (class $\chi_4$). 
With respect to the orthonormal frame $X_i:=\frac{1}{ar}e_i$ for 
$i=1,\ldots , 5$ and $X_6:=\del_r$, they are given by
\bdm
\Omega_1=-X_1\wedge X_2+X_3\wedge X_4+X_5\wedge X_6 \mbox{ and } 
\Omega_2=X_1\wedge X_2+X_3\wedge X_4+X_5\wedge X_6.
\edm
\end{enumerate}
\end{lem}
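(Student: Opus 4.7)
The entire lemma reduces to computations in the orthonormal left-invariant frame $\{e_1,\dots,e_5\}$. From the definitions of the $u_i$ one reads off the only nonzero brackets $[u_1,u_2]=[u_3,u_4]=u_5$, hence $[e_1,e_2]=[e_3,e_4]=\rho e_5$ and $[e_i,e_j]=0$ otherwise. The Koszul formula then gives the Levi-Civita connection: the only nonvanishing terms are $\nabla^g_{e_1}e_2=\nabla^g_{e_3}e_4=(\rho/2)\xi$, $\nabla^g_{e_1}\xi=-(\rho/2)e_2$, $\nabla^g_{e_3}\xi=-(\rho/2)e_4$ together with those forced by skew-symmetry of $g(\nabla^g_X\cdot,\cdot)$ and by the standard Koszul interchange. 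From this point everything is mechanical.

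For (1), substitute into the curvature formula to obtain $\mathrm{Ric}(e_1,e_1)=-\rho^2/2$ (and by symmetry the same for $e_2,e_3,e_4$) while $\mathrm{Ric}(\xi,\xi)=\rho^2$; these are never equal, so $(H,g_\rho)$ is not Einstein. For the Tanno claim, direct substitution gives $g_{\rho,t}=(t/\rho)(du^2+dx^2+dv^2+dy^2)+t^2(dz-u\,dx-v\,dy)^2$, and the diffeomorphism $(u,v,x,y,z)\mapsto(\sqrt{t}\,u,\sqrt{t}\,v,\sqrt{t}\,x,\sqrt{t}\,y,tz)$ pulls $g_\rho$ back to exactly this expression, so the Tanno deformation is isometric to $(H,g_\rho)$ itself and in particular stays in the family.

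For (2), differentiate $F_i$ using $\nabla^g_X(e^i\wedge e^j)=(\nabla^g_Xe^i)\wedge e^j+e^i\wedge(\nabla^g_Xe^j)$ with the duals of $\nabla^g$ computed above. One obtains $\nabla^g_{e_1}F_1=(\rho/2)e^{15}$, $\nabla^g_{e_2}F_1=(\rho/2)e^{25}$, $\nabla^g_{e_3}F_1=-(\rho/2)e^{35}$, $\nabla^g_{e_4}F_1=-(\rho/2)e^{45}$, $\nabla^g_\xi F_1=0$; and $\nabla^g_{e_i}F_2=-(\rho/2)e^{i5}$ for $i=1,\dots,4$ with $\nabla^g_\xi F_2=0$. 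Plugging these into the Chinea--Gonzalez defining relations and computing $\delta F_1=0$ and $\delta F_2(\xi)=2\rho$ confirms $F_1\in\mathcal{C}_7$ and $F_2\in\mathcal{C}_6$; the Sasaki condition $\delta F_2(\xi)=n-1=4$ then forces $\rho=2$. (The existence of a characteristic connection for $F_1$ also follows from Lemma \ref{lem.contact-char-connection}, since $\mathcal{C}_7$ appears in the list of Theorem \ref{thm:acscharconn}.)

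For (3) there is nothing further to prove about the classes: $\mathcal{C}_7\subset\mathcal{C}_3\oplus\cdots\oplus\mathcal{C}_8$, so by Theorem \ref{thm:acsvsacs} the cone over $F_1$ is hermitian (class $\chi_3\oplus\chi_4$), while $F_2\in\mathcal{C}_6$ gives a locally conformally K\"ahler cone (class $\chi_4$). The explicit formulas for $\Omega_i$ are then obtained from $\Omega(X,Y)=\bar g(X,JY)$ together with $J(X_5)=-X_6$, $J(X_6)=X_5$, and $J(X_i)=-(1/(ar))\phi(e_i)$ for $i\leq 4$, using the $\phi_i$ read off from $g(X,\phi_iY)=F_i(X,Y)$. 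The only real obstacle throughout is bookkeeping: signs, the range of indices, and the pattern of nonzero entries; conceptually everything is immediate from the bracket relations and Theorem \ref{thm:acsvsacs}.
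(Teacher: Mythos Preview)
Your proof is correct and follows exactly the approach the paper intends: the paper itself omits the argument entirely, calling it ``a lengthy, but routine calculation'' together with an application of Theorem~\ref{thm:acsvsacs}, and you have carried out precisely that routine calculation. One small point: to justify the word ``mixed'' in part~(3) you should also note, via Theorem~\ref{thm:ocsclass}, that the cone over $F_1$ is neither pure $\chi_3$ (since $\delta F_1=0\neq a(n-1)\eta$) nor pure $\chi_4$ (since $F_1\notin\mathcal{C}_6$); this is immediate from the data you already computed.
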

In particular, $N_i=\bar N_i=0$ and $dF_i=0$ for $i=1,2$. 
Becker-Bender calculates in \cite{BB12} that the characteristic connection 
for both structures is given by 
$T^c=-\rho (e_1\wedge e_2 +e_3\wedge e_4)\wedge e_5$. 
One checks that $d\eta=-\rho (e_1\wedge e_2 +e_3\wedge e_4)$, hence
$d\eta=\rho F_2$, whereas $F_1$ is not proportional to $d\eta$.
She also 
proves that $\psi_1$ and $\psi_2$, defined via the equations
\bdm
\phi_2(X)\psi_1=-iX\psi_1 ~\forall X\perp \xi \mbox{ and } 
\phi_2(X)\psi_2=iX\psi_2 ~\forall X\perp \xi,
\edm
where $\phi_j$ is the $(1,1)$ tensor to the $2$-form $F_j$ for $j=1,2$, 
are Killing spinors with torsion for $s=-\frac{3}{4}$ with Killing number 
$\rho$ and $-\rho$ respectively:
\bdm
\nabla^{-\frac{3}{4}}_X\psi_1=\rho X\psi_1 \mbox{ and } 
\nabla^{-\frac{3}{4}}_X\psi_2=-\rho X\psi_2 .
\edm
If we set $\rho_1=\rho, \rho_2=-\rho$, we can rewrite these equations as
\bdm
\nabla^c_X\psi_i -(X\lrcorner T^c)\psi_i\ =\ \rho_i X\psi_i.
\edm
On the other hand, let us consider again the equation from
Theorem \ref{th:ksacs}, case (2), for $s=1/4$:
\bdm
\nabla^c_X\psi-\frac{a}{2}X\lrcorner(\eta\wedge F)\psi=\alpha X\psi.
\edm
Since $a$ has to be chosen as $a=2|\alpha|=2|\rho_i|=2\rho$, we conclude
that both Killing spinors $\psi_1,\psi_2$ with torsion on the Heisenberg group
satisfy this equation for the structure $F=F_2$.
Therefore, their lifts to the cone are parallel for the characteristic
connection of the conformally K\"ahler structure $\Omega_2$.
We see at once that the argument can be generalized as follows:
\begin{lem}
Let $(M,g,\phi,\eta)$ be an $\alpha$-Sasaki structure (class $C_6$)
satisfying $d\eta=\lambda F$ for some $\lambda>0$ and admitting
a Killing spinor with torsion with Killing number
$\alpha=\lambda$ or $\alpha=-\lambda$ for $s=-3/4$ . Then its cone is a
locally conformally K\"ahler manifold (class $\chi_4$), and the spinor
lifts to a parallel spinor on $\bar M$ with respect to its characteristic
connection.
\end{lem}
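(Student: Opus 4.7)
The plan is to reduce the claim to a direct verification of the hypotheses of Theorem \ref{th:ksacs}, case (2), for $s=1/4$, by rewriting the Killing spinor equation with $s=-3/4$ in terms of the characteristic connection.

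First I would settle the almost hermitian type of the cone. Since the structure on $M$ lies in class $\mathcal{C}_6$ (it is $\alpha$-Sasaki), Theorem \ref{thm:acsvsacs} yields directly that the induced almost hermitian structure on $\bar M$ is of class $\chi_4$, i.e.\ locally conformally K\"ahler. This takes care of the first assertion.

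Next I would translate the spinor equation. Because the almost contact structure is $\alpha$-Sasaki with $d\eta=\lambda F$, the characteristic torsion is $T^c=\eta\wedge d\eta=\lambda\,\eta\wedge F$. On spinors, the relation $\nabla^s=\nabla^g+s(X\lrcorner T^c)$ (with $\nabla^c$ corresponding to $s=1/4$) gives
\[
\nabla^{-3/4}_X\psi\ =\ \nabla^c_X\psi-(X\lrcorner T^c)\psi.
\]
Hence the hypothesis $\nabla^{-3/4}_X\psi=\alpha X\psi$ with $\alpha=\pm\lambda$ becomes
\[
\nabla^c_X\psi-\lambda\bigl(X\lrcorner(\eta\wedge F)\bigr)\psi\ =\ \alpha X\psi.
\]
Now I would choose the cone constant $a=2|\alpha|=2\lambda>0$, as dictated by Lemma \ref{lem:corrksps}. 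With this choice $\lambda=a/2$, and the equation above reads
\[
\nabla^c_X\psi\ =\ \alpha X\psi+\tfrac{a}{2}\bigl(X\lrcorner(\eta\wedge F)\bigr)\psi,
\]
which is precisely the left-hand side of the correspondence in Theorem \ref{th:ksacs}, case (2), for $s=1/4$.

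Finally, applying that correspondence (together with the fact, noted above, that the cone is almost hermitian of class $\chi_4$ so that its characteristic connection $\bar\nabla$ really is the one produced by the cone construction) yields the lifted spinor $\bar\psi$ on $\bar M$ satisfying $\bar\nabla\bar\psi=0$. The only step that requires care is the bookkeeping of constants — matching $s=-3/4$ on $M$ with $s=1/4$ on $\bar M$, and verifying that $a=2\lambda$ is consistent with both the sign of $\alpha$ and the normalization implicit in $T^c=\lambda\,\eta\wedge F$ — but once the identity $\nabla^{-3/4}=\nabla^c-(X\lrcorner T^c)$ is in hand, the rest is automatic from Theorem \ref{th:ksacs}.
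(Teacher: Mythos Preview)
Your proposal is correct and follows essentially the same route as the paper: invoke Theorem \ref{thm:acsvsacs} for the class $\chi_4$ statement, use $T^c=\eta\wedge d\eta=\lambda\,\eta\wedge F$ to rewrite $\nabla^{-3/4}_X\psi=\alpha X\psi$ as $\nabla^c_X\psi-\lambda(X\lrcorner(\eta\wedge F))\psi=\alpha X\psi$, set $a=2|\alpha|=2\lambda$, and then read off the conclusion from Theorem \ref{th:ksacs}, case (2), at $s=1/4$. The paper presents exactly this computation in the Heisenberg example immediately preceding the lemma and then states the lemma as the obvious generalization.
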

Let us have a closer look at the characteristic connections $\bar\nabla^i$, induced
by the connections $\nabla^i$ with torsions $T^i=T^c-2a\eta\wedge F_i$ on $M$, 
and the $s$-dependent connections $\bar\nabla^{s,i}:=\bar\nabla^g+2s\bar
T^i$ ($i=1,2$). Since $F_1\neq F_2$,
we see that the characteristic connections  $\bar\nabla^i$ (of the almost 
hermitian structures  $\Omega_i$) on $\bar M$ do not coincide,
despite the fact that the characteristic connections  (of the metric
almost structures $F_i$)  coincide on $M$, $i=1,2$. This illustrates 
neatly the subtle dependence of the construction on the underlying
geometric structure, not only its characteristic connection.

The equivalence of the 
characteristic connections for $F_1$ and $F_2$ on $M$ implies that the 
connections $\bar\nabla^{s,i}+\frac{4s}{r}(\del_r\lrcorner\Omega_i)\wedge\Omega_i$ 
are the same for $i=1,2,\ s=-3/4$. As discussed above, this
connection is in turn just the characteristic connection of the locally
conformally K\"ahler structure $\Omega_2$, hence we have the following
relation between the K\"ahler forms:
\bdm
d\Omega_2^{J_2}\ =\ - 3\,[d\Omega_i^{J_i}+ \frac{2}{r} (\del_r\lrcorner \Omega_i)\wedge \Omega_i]\ \qquad i=1,2.
\edm
In particular, we can apply Theorem \ref{th:ksacs},  case (1) for $i=1$ 
and can state that the differential equation for the two  $\bar\nabla^2$-parallel 
spinors on $\bar M$ can equally be written
\bdm
0\ =\ \bar\nabla^{-\frac{3}{4},1}_X\psi - \frac{3}{2r} 
X\lrcorner((\del_r\lrcorner\Omega_1)\wedge\Omega_1)\psi \ =\
\bar\nabla^{\bar g}_X\psi-\frac{3}{4}X\lrcorner[d\Omega_1
+\frac{2}{r}(\del_r\lrcorner\Omega_1)\wedge\Omega_1]\psi.
\edm
\end{exa}
\begin{exa}
Another example (see \cite{BB12}) is given by the homogeneous 
space $M:=\SO(3)\times\SL(2,\R)/\SO(2)$ with the embedding
\bdm
\SO(2)\ni A(t):=\begin{bmatrix} \cos t & -\sin t\\ \sin t &\cos t
\end{bmatrix}  \mapsto \left[A(t),A\left(\frac{t}{2}\right)^{-1}\right].
\edm
As an orthonormal basis of a reductive complement of $\so(2)$ in 
$\so(3)\times\sl(2,\R)$ we choose
\bdm
e_1\ :=\ D_1 \left(\begin{bmatrix} 0&0&0\\0&0&-1\\0&1&0
  \end{bmatrix},0\right),\quad
e_2:=D_1\left(\begin{bmatrix} 0&0&1\\0&0&0\\-1&0&0
  \end{bmatrix},0\right),\quad
e_5:=\left(c_1\begin{bmatrix} 0&-1&0\\1&0&0\\0&0&0 
\end{bmatrix},c_2\frac{1}{2}\begin{bmatrix} 0&-1\\1&0 \end{bmatrix}\right),
\edm
\bdm
e_3:=\frac{1}{2}D_2\left(0,\begin{bmatrix} 1&0\\0&-1 \end{bmatrix}\right),\quad
e_4:=\frac{1}{2}D_2\left(0,\begin{bmatrix} 0&1\\1&0 \end{bmatrix}\right),
\edm
such that $c_1+c_2\neq0$, $D_1^2=c_1(c_1+c_2)$, $D_2^2=-c_2(c_1+c_2)$ and 
the numbers $c_1$, $-c_2$ and $(c_1+c_2)$ have the same signature. We consider 
the almost contact structure $(M,\xi,F)$ defined via
\bdm
\xi:=e_5 \mbox{ and } F=e_1\wedge e_2+e_3\wedge e_4.
\edm
Then the characteristic connection $\nabla^c$ has torsion 
$T^c=-c_1e_1\wedge e_2\wedge e_5-c_2e_3\wedge e_4\wedge e_5$.
\begin{lem}
 The almost contact structure $(M,\xi,F)$ is normal. Furthermore, the 
almost hermitian structure on $\bar M$, constructed with
$a=\frac{-c_1-c_2}{4}$, induced by the almost contact structure $(M,\xi,F)$ 
is of class $\chi_3$ and thus the structure $(M,\xi,F)$ is of mixed class 
$\mathcal{C}_3\oplus..\oplus\mathcal{C}_8$.
\end{lem}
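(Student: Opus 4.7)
Everything in sight is $G$-invariant on the reductive homogeneous space $M=\SO(3)\times\SL(2,\R)/\SO(2)$, so it suffices to verify the tensorial identities at the base point using the brackets of the basis $(e_1,\ldots,e_5)$ of the reductive complement $\m$ of $\so(2)$ in $\so(3)\oplus\sl(2,\R)$. The plan is therefore to carry out three routine linear-algebra computations and then assemble them with the general structure theorems proved earlier.

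First I compute the brackets $[e_i,e_j]\in\so(3)\oplus\sl(2,\R)$ from the explicit matrices, using $[A_1,A_2]=A_3$ (and cyclic) in $\so(3)$ and the sign-twisted analogue in $\sl(2,\R)$, and then project them to $\m$ modulo $\so(2)$. From the Maurer--Cartan formula $de_i(X,Y)=-e_i([X,Y]_\m)$, together with the rescalings built into the $e_j$ and the relations $D_1^2=c_1(c_1+c_2)$, $D_2^2=-c_2(c_1+c_2)$, I read off $d\eta=de_5$ and $dF=d(e_{12}+e_{34})$, and compute $dF^\phi$ from $\phi(e_1)=e_2$, $\phi(e_3)=e_4$, $\phi(\xi)=0$. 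Substituting into the Friedrich--Ivanov decomposition
\begin{equation*}
T^c \;=\; \eta\wedge d\eta \;+\; dF^\phi \;+\; N \;-\; \eta\wedge(\xi\lrcorner N),
\end{equation*}
which is valid here since the characteristic connection exists, and comparing with the given $T^c=-c_1\,e_{125}-c_2\,e_{345}$, I expect $\eta\wedge d\eta+dF^\phi$ to account for all of $T^c$, forcing $N-\eta\wedge(\xi\lrcorner N)=0$. The skew-symmetry of $N$ then gives $N=0$, proving normality. By Theorem~\ref{thm:acsvsacs} this already places the induced almost hermitian structure on $\bar M$ in the mixed class $\chi_3\oplus\chi_4$, and identifies $(M,\xi,F)$ as belonging to $\mathcal{C}_3\oplus\cdots\oplus\mathcal{C}_8$.

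To refine the cone to the pure class $\chi_3$, I use Theorem~\ref{thm:ocsclass}, according to which it suffices to check that
\begin{equation*}
\delta F \;=\; a(n-1)\,\eta \;=\; 4a\,\eta \;=\; -(c_1+c_2)\,\eta.
\end{equation*}
For this I compute the Levi-Civita connection on $\m$ from the Koszul formula (the metric is normal homogeneous up to the split into the two orthogonal factors, so $\nabla^g_{e_i}e_j$ is given by a symmetrized combination of the $[e_i,e_j]_\m$) and then evaluate $\delta F=-\sum_{i=1}^{5}e_i\lrcorner\nabla^g_{e_i}F$ in the orthonormal frame $(e_1,\ldots,e_5)$. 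The algebraic relations $D_1^2=c_1(c_1+c_2)$ and $D_2^2=-c_2(c_1+c_2)$ are exactly what is needed for the two contributions coming from the $\so(3)$ block (involving $e_1,e_2$) and the $\sl(2,\R)$ block (involving $e_3,e_4$) to combine into the single coefficient $-(c_1+c_2)$ multiplying $\eta$.

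The main obstacle is the sign and scaling bookkeeping: the $\so(3)$ and $\sl(2,\R)$ factors contribute brackets of opposite signature, and the constants $D_1,D_2,c_1,c_2$ enter inhomogeneously, so that the cancellation producing exactly $\delta F=-(c_1+c_2)\eta$ (rather than a more complicated expression involving $c_1$ and $c_2$ separately) is the key algebraic check. Once it works out, combining the normality established above with this equality and invoking Theorems~\ref{thm:ocsclass} and \ref{thm:acsvsacs} completes the proof of both claims.
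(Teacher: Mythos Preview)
Your overall strategy matches the paper's: verify normality, compute $\delta F=-(c_1+c_2)\eta$, and invoke Theorems~\ref{thm:ocsclass} and~\ref{thm:acsvsacs}. The $\delta F$ computation via the homogeneous Levi--Civita data is exactly what is needed, and your observation about the relations $D_1^2=c_1(c_1+c_2)$, $D_2^2=-c_2(c_1+c_2)$ producing the single coefficient $-(c_1+c_2)$ is the right point.

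However, your normality argument has a genuine gap. From the Friedrich--Ivanov decomposition you would obtain $N-\eta\wedge(\xi\lrcorner N)=0$, but this together with the skew-symmetry of $N$ does \emph{not} force $N=0$. Indeed, any $3$-form of the shape $N=\eta\wedge\beta$ with $\beta\in\Lambda^2(\xi^\perp)$ satisfies $N-\eta\wedge(\xi\lrcorner N)=0$ identically; the equation only kills the component of $N$ supported on $\xi^\perp$. To close the argument you must separately show $\xi\lrcorner N=0$. Since you already compute $d\eta$, the cleanest way is to check $d\eta(\phi X,Y)+d\eta(X,\phi Y)=0$, which by Lemma~\ref{lem:nijen} is equivalent to $\xi\lrcorner N=0$; combined with the vanishing of the $\xi^\perp$-part you then get $N=0$. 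Alternatively, just compute $N$ directly from the brackets, which is no harder on this example.

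A secondary omission: the paper also verifies that the K\"ahler relation $(\nabla^g_XF)(Y,Z)=a\eta(Y)g(X,Z)-a\eta(Z)g(X,Y)$ is \emph{never} satisfied, so that the cone is genuinely of class $\chi_3$ rather than in the trivial subclass. You should include this check (it fails because $c_1\neq c_2$, so $T^c\neq 2a\,\eta\wedge F$ for any $a$), as it is what justifies calling the class on $M$ ``mixed'' rather than pure $\mathcal{C}_6$.
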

\begin{proof}
One uses Theorem \ref{thm:ocsclass} and proves that the almost contact 
structure $(M,\xi,F)$ is normal, satisfies $\delta F=(-c_1-c_2)\eta$ and 
never satisfies $(\nabla^g_XF)(Y,Z)=a\eta(Y)g(X,Z)-a\eta(Z)g(X,Y)$. Thus, 
the structure on $\bar M$ is never K\"ahler and for $a=\frac{-c_1-c_2}{4}$ 
it really is of class $\chi_3$.
\end{proof}
In this example we only have Killing spinors with torsion satisfying 
$\nabla^s_X\psi=\alpha X\psi$ for $\alpha=0$. But since the construction of 
$\bar M$ explicitly depends on $2\alpha=a\neq0$, we cannot lift these spinors 
to $\bar M$.
\end{exa}
%
%
%
\subsection{Metric almost contact $3$-structures}
%
%
%
Let $M$ be a manifold of dimension $n=4m-1$ with $3$ metric almost contact 
structures given by $\xi_i$, $\eta_i$ and $\phi_i$ for $i=1,2,3$. Looking at 
the cone $\bar M$, we define the three almost hermitian structures 
\begin{align*}
&J_1(ar\del_r):=\xi_1,  &J_1(\xi_1)=-ar\del_r,  \qquad\qquad 
&J_1(V)=-\phi_1(V) \mbox{ for } V\perp \xi_1,\del_r,\\
&J_2(ar\del_r):=\xi_2,  &J_2(\xi_2)=-ar\del_r, \qquad\qquad  
&J_2(V)=-\phi_2(V) \mbox{ for } V\perp \xi_2,\del_r,\\
&J_3(ar\del_r):=-\xi_3, &J_3(\xi_3)=ar\del_r,  \qquad \qquad
&J_3(V)=-\phi_3(V) \mbox{ for } V\perp \xi_3,\del_r.
\end{align*}
Conversely, let $\bar M$ be a $4m$ dimensional manifold with three almost 
hermitian structures $J_1$, $J_2$ and $J_3$. We can define three almost 
contact structures 
\begin{alignat*}{2}
\xi_1&\ :=\ +aJ_1(\del_r),\qquad   &\phi_1(X) &\ := \ 
-J_1(X)+\bar g(J_1(X),\del_r)\del_r.\\
\xi_2&\ :=\ +aJ_2(\del_r), \qquad   &\phi_2(X) &\ :=\ 
-J_2(X)+\bar g(J_2(X),\del_r)\del_r.\\
\xi_3&\ :=\ -aJ_3(\del_r),\qquad    &\phi_3(X) &\ :=\ 
+ J_3(X)-\bar g(J_3(X),\del_r)\del_r.
\end{alignat*}
on $M=M\times\{1\}\subset \bar M$. We can apply Theorem \ref{th:acs} to 
each of these structures and prove
\begin{thm}\label{thm:3aks}
 The three almost hermitian structures on $\bar M$ satisfy the relation 
$J_1J_2=-J_2J_1=J_3$ if and only if $\xi_1$, $\xi_2$ and $\xi_3$ are 
orthonormal and the almost contact structures on $M$ satisfy the following
\begin{align}\label{eq:hkc1}
\phi_3\phi_2\ =\ -\phi_1+\eta_2\otimes\xi_3,\quad
\phi_2\phi_3\ =\ +\phi_1+\eta_3\otimes\xi_2,\quad
\phi_1\phi_3\ =\ -\phi_2+\eta_3\otimes\xi_1,\\
\phi_3\phi_1\ =\ +\phi_2+\eta_1\otimes\xi_3,\quad
\phi_2\phi_1\ =\ -\phi_3+\eta_1\otimes\xi_2,\quad
\phi_1\phi_2\ =\ +\phi_3+\eta_2\otimes\xi_1,\label{eq:hkc2}
\end{align}
where $\eta_i$ is the dual to $\xi_i$ for $i=1,2,3$.
If and only if there are characteristic connections $\nabla^{c,i}$ on $M$ 
for each of the three almost hermitian structures $(\eta_i,\,\phi_i)$
such that the corresponding connections $\nabla^i$ constructed in 
Definition $\ref{def:normaldef}$ coincide $\nabla^1=\nabla^2=\nabla^3=:\nabla$, 
we have for the appendant connection $\bar\nabla$: 
$\bar\nabla J_2=\bar\nabla J_3=\bar\nabla J_1=0$. In this case additionally 
we get the commutator relations 
\bdm
[\xi_1,\xi_2]=2a\xi_3-T(\xi_1,\xi_2),\quad
[\xi_2,\xi_3]=2a\xi_1-T(\xi_2,\xi_3),\quad
[\xi_3,\xi_1]=2a\xi_2-T(\xi_3,\xi_1).
\edm
\end{thm}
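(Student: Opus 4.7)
The plan is to treat the three assertions in turn, reducing each to results already established in the text (in particular Theorems~\ref{char-conn-unique}, \ref{th:acs} and Lemma~\ref{lem:normal}) plus a pointwise case check.

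For the first equivalence, I would decompose the tangent bundle of $\bar M$ at any point as
\bdm
T_{(p,r)}\bar M\ =\ \R\,\del_r\ \oplus\ \R\,\xi_1\ \oplus\ \R\,\xi_2\ \oplus\ \R\,\xi_3\ \oplus\ W,
\edm
where $W$ is the $\bar g$-orthogonal complement of $\del_r,\xi_1,\xi_2,\xi_3$, and verify $J_1J_2=J_3$ (respectively $J_2J_1=-J_3$) on each summand. The orthonormality of $\xi_1,\xi_2,\xi_3$ is precisely what is needed to guarantee $\xi_j\perp\xi_i,\del_r$ for $i\neq j$, so that the defining formulas give $J_i(\xi_j)=-\phi_i(\xi_j)$ (and analogously for $J_3$). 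Evaluating $J_1J_2$ on $ar\del_r$ and on each $\xi_i$ yields conditions such as $\phi_i(\xi_j)=\pm\xi_k$, while evaluating on $V\in W$ (where all $\eta_i$ vanish) yields the quaternionic-type identities $\phi_i\phi_j=\pm\phi_k$ on $W$. Packaging these together in a way that extends from $W$ to all of $TM$ introduces precisely the correction terms $\eta_j\otimes\xi_i$ (which are invisible on $W$ but required on the $\xi_i$ directions), yielding (\ref{eq:hkc1}) and (\ref{eq:hkc2}); conversely, these relations plus orthonormality feed back into the case analysis to give $J_1J_2=-J_2J_1=J_3$.

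For the second equivalence I would invoke Theorem~\ref{th:acs}: for each $i$, the connection $\nabla^i$ obtained from $\nabla^{c,i}$ by Definition~\ref{def:normaldef} lifts to a connection $\bar\nabla^i$ on $\bar M$ satisfying $\bar\nabla^i J_i=0$. Since $J_i$ is an almost hermitian structure on $\bar M$ and the characteristic connection of such a structure is unique by Theorem~\ref{char-conn-unique}, $\bar\nabla^i$ is \emph{the} characteristic connection of $(\bar M,J_i)$. Hence, if $\nabla^1=\nabla^2=\nabla^3=:\nabla$ then their common lift $\bar\nabla$ parallelises all three $J_i$ simultaneously; conversely, if some common lift $\bar\nabla$ satisfies $\bar\nabla J_i=0$ for every $i$, then by uniqueness $\bar\nabla=\bar\nabla^i$ for all $i$, so the underlying connections on $M$ coincide.

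For the commutator relations, I would use Lemma~\ref{lem:normal}(a): since $\nabla$ coincides with each $\nabla^i$, one has $\nabla_X\xi_i=-a\phi_i(X)$ for all $i$ and all $X$. The defining identity of the torsion $T$ of $\nabla$ then gives
\bdm
[\xi_i,\xi_j]\ =\ \nabla_{\xi_i}\xi_j-\nabla_{\xi_j}\xi_i-T(\xi_i,\xi_j)\ =\ -a\phi_j(\xi_i)+a\phi_i(\xi_j)-T(\xi_i,\xi_j),
\edm
and substituting the values $\phi_i(\xi_j)=\pm\xi_k$ extracted in the first step (for instance $\phi_1\xi_2=\xi_3$, $\phi_2\xi_1=-\xi_3$, etc.) collapses the right-hand side to $2a\xi_k-T(\xi_i,\xi_j)$ for the appropriate cyclic $k$.

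The only real obstacle is the bookkeeping in the first step: one must carefully match the sign conventions in the three definitions of $J_i$ (note the asymmetric treatment of $J_3$) against the six relations (\ref{eq:hkc1}), (\ref{eq:hkc2}); the remaining two parts are then essentially automatic from Theorem~\ref{char-conn-unique} and Lemma~\ref{lem:normal}.
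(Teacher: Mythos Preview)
Your proposal is correct and, for the first two parts, follows essentially the same route as the paper: a pointwise case analysis on the splitting of $T\bar M$ for the equivalence with (\ref{eq:hkc1})--(\ref{eq:hkc2}), and an appeal to Theorem~\ref{th:acs} together with uniqueness of the characteristic connection for the statement about $\bar\nabla$.

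For the commutator relations your argument differs from the paper's. The paper works on $\bar M$: it writes $\xi_i=aJ_i(\del_r)$ (with the appropriate sign for $i=3$), expands $[\xi_1,\xi_2]=\bar\nabla_{\xi_1}\xi_2-\bar\nabla_{\xi_2}\xi_1-\bar T(\xi_1,\xi_2)$, and then uses $\bar\nabla J_i=0$ together with $\bar\nabla_X\del_r=\frac{1}{r}X$ to reduce this to $-2a^2 J_3(\del_r)-\bar T(\xi_1,\xi_2)=2a\xi_3-T(\xi_1,\xi_2)$. Your route stays entirely on $M$: you invoke Lemma~\ref{lem:normal}(a) (valid for each $\nabla^i=\nabla$) to get $\nabla_X\xi_i=-a\phi_i(X)$, and then plug in the values $\phi_i(\xi_j)=\pm\xi_k$ obtained in the first step. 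This is a bit more elementary and avoids the detour through $\bar M$; the paper's computation, on the other hand, makes the role of $\bar\nabla J_i=0$ more visible. Both are equally valid.
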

\begin{proof}
 Given three almost hermitian structures satisfying the relation 
$J_1J_2=-J_2J_1=J_3$, we compute
\begin{align*}
 \phi_3(\phi_2(X))&=-J_3(J_2(X))+\bar g(J_3(J_2(X)),
\del_r)\del_r+\bar g(J_2(X),\del_r)J_3(\del_r)-\bar g(J_3(X), \del_r)\bar g(J_3(\del_r),\del_r)\del_r\\
&=-\phi_1(X)-\bar g(X,J_2\del_r)J_3(\del_r)=-\phi_1(X)-a^2g(X,J_2\del_r)J_3(\del_r)\\
&=-\phi_1(X)+g(X,\xi_2)\xi_3,
\end{align*}
and similarly for the other relations. Conversely, 
given three almost hermitian structures satisfying equations (\ref{eq:hkc1}) 
and (\ref{eq:hkc2}) we plug in $\xi_1$, $\xi_2$, and $\xi_3$ and, with 
$\phi_i(\xi_i)=0$ for $i=1,2,3$, we obtain immediately
\bdm
\phi_1(\xi_2)=\xi_3,\quad
\phi_1(\xi_3)=-\xi_2,\quad
\phi_2(\xi_1)=-\xi_3, \quad
\phi_2(\xi_3)=\xi_1, \quad
\phi_3(\xi_1)=\xi_2, \quad
\phi_3(\xi_2)=-\xi_1. 
\edm
Since all $\phi_i$ leave the vector space $V:=\mathrm{span}(\xi_1,\xi_2,\xi_3)$ 
invariant and since they are orthonormal, they also leave $V^\perp$ 
invariant. For $X\perp\xi_1,\xi_2,\xi_3,\del_r$ we have
\bdm
J_1(J_2(X))=\phi_1(\phi_2(X))=\phi_3(X)=J_3(X)=-\phi_2(\phi_1(X))=-J_2(J_1(X)).
\edm
For $\xi_1$ we obtain
\begin{align*}
J_1(J_2(\xi_1))=-J_1(\phi_2(\xi_1))=J_1(\xi_3)=-\phi_1(\xi_3)=\xi_2&=J_2(ar\del_r)=-J_2(J_1(\xi_1))
=\phi_3(\xi_1)=J_3(\xi_1)
\end{align*}
and similarly for $\xi_2$, $\xi_3$ and $\del_r$. For  a
 connection as in Theorem (\ref{th:acs}), we have that all 
almost hermitian structures are parallel under $\bar\nabla$ and 
for $X,Y\in TM$
\bdm
[X,Y]=\bar\nabla^{\bar g}_XY-\bar\nabla^{\bar g}_YX
=\bar\nabla_XY-\bar\nabla_YX-\bar T(X,Y).
\edm
Thus the commutator relations are given by
\bea[*]
[\xi_1,\xi_2]& =&  a^2[J_1(\del_r),J_2(\del_r)]
\ =\ a^2(\bar\nabla_{J_1(\del_r)}J_2(\del_r)-\bar\nabla_{J_2(\del_r)}J_1(\del_r))
-\bar T(\xi_1,\xi_2)\\
&=& a^2(J_2(\bar\nabla_{J_1(\del_r)}\del_r)-J_1(\bar\nabla_{J_2(\del_r)}\del_r))
-\bar T(\xi_1,\xi_2)\\
&=& a^2(J_2(J_1(\del_r))-J_1(J_2(\del_r)))-\bar T(\xi_1,\xi_2)\\
&=& -2a^2J_3(\del_r)-\bar T(\xi_1,\xi_2)=2a\xi_3-T(\xi_1,\xi_2).
\eea[*]
The other relations are to be calculated similarly.
\end{proof}
\begin{NB}
 If we rescale the metric such that $a=1$ and if $T=0$, we have 3 K\"ahlerian 
structures on $\bar M$ and thus $3$ Sasakian structures on $M$. Then the 
commutator relations in Theorem \ref{thm:3aks} make sure that the structures 
on $M$ form a $3$-Sasakian structure. This is Lemma $5$ of \cite{B93}: 
A one to one correspondence between  hyperk\"ahler structures on $\bar M$ and 
$3$-Sasaki structures on $M$.
\end{NB}
\begin{NB}
We emphasize that it is not necessary that the three characteristic connections 
$\nabla^{c,i}$, $i=1,2,3$ coincide in order to apply Theorem \ref{thm:3aks},
only the connections $\nabla^i$ with torsion  $T^i=T^{c,i}-2a\eta_i\wedge F_i$ 
have to be equal. If $M$ is a 3-Sasakian manifold,
$T^i=0$ for $i=1,2,3$ and thus $\nabla^1=\nabla^2=\nabla^3=\nabla^g$.
In this case there exists a special $G_2$ structure on $M$ which will be discussed 
in Example \ref{ex:sasakig2}.
\end{NB}
%
%
%
\section{$G_2$ structures -- $\Spin(7)$ structures on the cone}
%
\subsection{Preparations}
%
Let $(M,g,\phi,P)$ be a $G_2$ manifold (see Section \ref{sec:som}).
We cite a classical, but for us crucial result by Fernandez and Gray:
\begin{lem}[{\cite[Lemma 2.7]{FG82}}]\label{lem:starphi}
\begin{align*}
*\phi(V,W,X,Y)&=g(P(V,W),P(X,Y))-g(V,X)g(W,Y)+g(V,Y)g(W,X)\\
&=\phi(V,W,P(X,Y))-g(V,X)g(W,Y)+g(V,Y)g(W,X).
\end{align*}
\end{lem}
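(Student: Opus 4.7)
The plan is to handle the two equalities separately: the second is a direct unpacking of the definition of $P$, while the first (main) identity is a pointwise tensorial equation that I would reduce to a finite check in a standard $G_2$-adapted frame. For the second equality, the defining relation $\phi(X,Y,Z)=g(X,P(Y,Z))$ together with the total antisymmetry of $\phi$ gives, for any $A,B,C$, that $\phi(A,B,C)=\phi(C,A,B)=g(C,P(A,B))$. Substituting $A=V$, $B=W$, $C=P(X,Y)$ yields
\[
\phi(V,W,P(X,Y))\ =\ g(P(X,Y),P(V,W))\ =\ g(P(V,W),P(X,Y)),
\]
which is the rightmost equality in the lemma.

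For the first equality, both sides are natural $(0,4)$-tensors built from $g$ and $\phi$, so it suffices to work pointwise at $p\in M$ and choose an orthonormal frame $e_1,\ldots,e_7$ in which $\phi_p$ has the standard form recalled earlier in the paper; then $*\phi_p$ has its corresponding explicit Hodge dual expansion, and $P(e_i,e_j)=\sum_k\phi(e_k,e_i,e_j)\,e_k$ is read off directly from the coefficients of $\phi$. Both sides of the identity are skew in $V\leftrightarrow W$ and in $X\leftrightarrow Y$ and symmetric under the swap $(V,W)\leftrightarrow(X,Y)$, so it suffices to test on ordered basis pairs $(e_i,e_j)$, $(e_k,e_l)$ with $i<j$ and $k<l$. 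The verification then splits by the cardinality of $\{i,j\}\cap\{k,l\}$: when $\{i,j\}=\{k,l\}$ the left-hand side vanishes and the right-hand side reduces to $g(P(e_i,e_j),P(e_i,e_j))-1$, which is zero because for each pair $\{i,j\}$ exactly one associative triple of $\phi$ contains it, contributing a single nonzero coefficient of absolute value one; in the remaining cases the two $g$-corrections combine to zero or the appropriate $\pm 1$, and one checks $*\phi(e_i,e_j,e_k,e_l)=g(P(e_i,e_j),P(e_k,e_l))$ directly from the frame data.

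The main obstacle is purely combinatorial, since in principle there are many ordered pairs of pairs to treat. This is mitigated by the fact that the seven associative triples of the standard $G_2$-form are the lines of a Fano plane, and by the $G_2$-invariance of both sides of the identity, which cuts the independent cases down to a handful of orbit representatives. A conceptually slicker alternative would be to observe that, once the right-hand side is known to be totally antisymmetric in all four arguments (which the identity itself forces a posteriori), it becomes a $G_2$-invariant element of $\Lambda^4(\mathbb{R}^7)^*$; the trivial $G_2$-summand of that space is one-dimensional and spanned by $*\phi$, so a single normalization evaluation suffices to fix the overall constant.
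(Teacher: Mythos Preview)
The paper does not give its own proof of this lemma; it is quoted verbatim from Fern\'andez--Gray \cite{FG82} as a known identity, with only a remark about sign conventions. Your proposal is a valid proof. The second equality is exactly the one-line unwinding of the definition of $P$ that you give. For the first equality, your reduction to a pointwise check in an adapted orthonormal frame is the standard way this identity (in index form, $\phi_{ijm}\phi_{klm}=\delta_{ik}\delta_{jl}-\delta_{il}\delta_{jk}+(*\phi)_{ijkl}$) is established, and your case split by $|\{i,j\}\cap\{k,l\}|$ is correct: the Fano-plane property that every unordered pair lies in a unique associative triple handles the cases with overlap, and the disjoint case is a finite comparison of coefficients of $*\phi$ against signed products of coefficients of $\phi$.

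One caution on your alternative representation-theoretic argument: you cannot invoke the identity ``a posteriori'' to conclude that the right-hand side is totally skew, since that is what you are trying to prove. You would need to verify total antisymmetry independently---for instance via the octonionic cross-product identity $\langle a\times b,\,c\times d\rangle + \langle a\times c,\,b\times d\rangle = 2\langle a,d\rangle\langle b,c\rangle - \langle a,b\rangle\langle c,d\rangle - \langle a,c\rangle\langle b,d\rangle$, or simply as a byproduct of the frame check you already outlined. Once that is in hand, the observation that the $G_2$-trivial summand of $\Lambda^4(\R^7)^*$ is one-dimensional and spanned by $*\phi$ does indeed reduce the identity to a single normalization.
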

\begin{NB}
In \cite{FG82} this  formula is stated differently,
\bdm
*\phi(V,W,X,Y)\ =\ -g(P(V,W),P(X,Y))+g(V,X)g(W,Y)-g(V,Y)g(W,X).
\edm
This is due to the standard $3$-form $\phi$ used by Fern\'andez and Gray,
which  corresponds to the orientation opposite to ours. This changes 
the sign of the Hodge operator. 
\end{NB}
Now we are able to prove
\begin{lem}\label{lem:nablastarphi}
For any metric connection $\nabla$ with skew torsion on $M$, the $G_2$ form
$\phi$ satisfies
\bdm
(\nabla_Z*\phi)(V,W,X,Y)\ =\ (\nabla_Z\phi)(V,W,P(X,Y))+(\nabla_Z\phi)(X,Y,P(V,W)).
\edm
If $\nabla$ satisfies $\nabla\phi=a*\phi$ for some $a>0$, we have the 
simplified relation
\bea[*]
(\nabla_Z*\phi)(V,W,X,Y)& =& a[\phi(X,Y,V)g(Z,W)-\phi(X,Y,W)g(Z,V)\\
&& +\phi(V,W,X)g(Z,Y)-\phi(V,W,Y)g(Z,X)].
\eea[*]
\end{lem}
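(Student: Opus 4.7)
My plan is to deduce both identities from Lemma~\ref{lem:starphi} by differentiating it and, for the second identity, iterating it once more to handle the nested~$P$.

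For the first identity, I would differentiate the relation $*\phi(V,W,X,Y) = \phi(V,W,P(X,Y)) - g(V,X)g(W,Y) + g(V,Y)g(W,X)$ of Lemma~\ref{lem:starphi} in the direction $Z$. Since $\nabla$ is metric, the two $g\otimes g$ summands differentiate to zero. For the remaining term I rewrite $\phi(V,W,P(X,Y)) = g(P(V,W),P(X,Y))$ (the second form of Lemma~\ref{lem:starphi}), apply Leibniz, and use the defining identity $\phi(A,B,C) = g(A,P(B,C))$ to convert $\nabla P$ back into $\nabla\phi$; the cyclic symmetry of $\phi$ then yields exactly the two terms $(\nabla_Z\phi)(V,W,P(X,Y))$ and $(\nabla_Z\phi)(X,Y,P(V,W))$.

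For the second identity, substituting $\nabla_Z\phi = a\,*\phi(Z,\cdot,\cdot,\cdot)$ into the first identity converts its right-hand side into $a[*\phi(Z,V,W,P(X,Y)) + *\phi(Z,X,Y,P(V,W))]$. I then plan to simplify each summand by applying Lemma~\ref{lem:starphi} a second time, but this time grouping $P(X,Y)$ with $Z$ rather than with $V,W$. After using the total skew-symmetry of $*\phi$, this leaves a residual term $\phi(V,W,P(P(X,Y),Z))$ together with two explicit $\phi\cdot g$ contractions. The nested expression $P(P(X,Y),Z)$ is then untangled by the ``double cross product'' identity $P(Z,P(X,Y)) = g(Z,Y)X - g(Z,X)Y + N(Z,X,Y)$, where $g(D,N(Z,X,Y)) := *\phi(D,Z,X,Y)$; this identity is itself an immediate consequence of Lemma~\ref{lem:starphi} applied to $\phi(D,Z,P(X,Y))$.

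Substituting this into the residual produces a contribution $\phi(V,W,N(Z,X,Y)) = *\phi(P(V,W),Z,X,Y)$, which by one antisymmetrization equals $-*\phi(Z,X,Y,P(V,W))$ and thus cancels the second summand of the original pair. What remains are exactly the four metric-times-$\phi$ contractions appearing in the statement, once the cyclic symmetry $\phi(V,X,Y) = \phi(X,Y,V)$ is used. The main obstacle I expect is purely combinatorial: tracking signs coming from the $3$- and $4$-fold antisymmetries and aligning them with cyclic permutations of $\phi$ so that the cancellation is transparent; all the geometric input is already contained in Lemma~\ref{lem:starphi}.
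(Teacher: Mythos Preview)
Your approach to the first identity is essentially identical to the paper's: both differentiate Lemma~\ref{lem:starphi}, use that $\nabla g=0$ to kill the $g\otimes g$ terms, and convert $(\nabla_Z P)$ back to $(\nabla_Z\phi)$ via $g(P(V,W),(\nabla_Z P)(X,Y))=(\nabla_Z\phi)(X,Y,P(V,W))$.

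For the second identity your route diverges from the paper's, though both are correct. After substituting $\nabla\phi=a*\phi$, the paper applies Lemma~\ref{lem:starphi} symmetrically to \emph{both} summands, writing
\[
*\phi(P(X,Y),Z,V,W)=\phi(P(X,Y),Z,P(V,W))-g(P(X,Y),V)g(Z,W)+g(P(X,Y),W)g(Z,V)
\]
and the analogous expansion with $(V,W)\leftrightarrow(X,Y)$. The two residual terms $\phi(P(X,Y),Z,P(V,W))+\phi(P(V,W),Z,P(X,Y))$ then cancel immediately by the antisymmetry of $\phi$ in its first and third slots, leaving precisely the four $\phi\cdot g$ terms. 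You instead regroup so that Lemma~\ref{lem:starphi} produces a nested $P(Z,P(X,Y))$, and then invoke the double cross product identity to untangle it; the resulting $*\phi(P(V,W),Z,X,Y)=-*\phi(Z,X,Y,P(V,W))$ cancels the untouched second summand. Your path requires the extra (easy) auxiliary identity and a bit more sign bookkeeping, while the paper's symmetric treatment is slightly shorter and makes the cancellation more transparent. Either way all the content is Lemma~\ref{lem:starphi}, as you correctly anticipated.
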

\begin{proof}
For any metric connection with skew torsion we have
\begin{align*}
 (\nabla_Z*\phi)(V,W,X,Y)=&Z*\phi(V,W,X,Y)-*\phi(\nabla_ZV,W,X,Y)
-*\phi(V,\nabla_ZW,X,Y)\\
&-*\phi(V,W,\nabla_ZX,Y)-*\phi(V,W,X,\nabla_ZY).
\end{align*}
Since $\nabla$ is metric, $g$ is parallel and with Lemma 
\ref{lem:starphi} we get
\begin{align*}
=&Z\phi(V,W,P(X,Y))-\phi(\nabla_ZV,W,P(X,Y))-\phi(V,\nabla_ZW,P(X,Y))-\phi(V,W,P(\nabla_ZX,Y))\\
&-\phi(V,W,P(X,\nabla_ZY))-\phi(V,W,\nabla_ZP(X,Y))+\phi(V,W,\nabla_ZP(X,Y)).
\end{align*}
We have $\phi(V,W,(\nabla_ZP)(X,Y))=g(P(V,W),(\nabla_ZP)(X,Y))
=(\nabla_Z\phi)(X,Y,P(V,W))$ and thus we get
\bdm
(\nabla_Z*\phi)(V,W,X,Y)\ =\ (\nabla_Z\phi)(V,W,P(X,Y))+(\nabla_Z\phi)(X,Y,P(V,W)).
\edm
The condition $\nabla\phi=a*\phi$ implies
\bdm
(\nabla_Z*\phi)(V,W,X,Y)\ =\ -a*\phi(P(X,Y),Z,V,W)-a*\phi(P(V,W),Z,X,Y)
\edm
and aplying once again  Lemma \ref{lem:starphi} yields
\bea[*]
\lefteqn{(\nabla_Z*\phi)(V,W,X,Y)\ = }\\
&=&-a\phi(P(X,Y),Z,P(V,W))-a\phi(P(V,W),Z,P(X,Y))+ag(P(X,Y),V)g(Z,W)\\
&& -ag(P(X,Y),W)g(Z,V)+ag(P(V,W),X)g(Z,Y)-ag(P(V,W),Y)g(Z,X)\\
&=&a[\phi(X,Y,V)g(Z,W)-\phi(X,Y,W)g(Z,V)+\phi(V,W,X)g(Z,Y)-\phi(V,W,Y)g(Z,X)],
\eea[*]
which finishes the proof.
\end{proof}
We define a $4$-form on the cone $\bar M$ via
\bdm
\Phi(\del_r,X,Y,Z)\ :=\ a^3r^3\phi(X,Y,Z),\qquad 
\Phi(X,Y,Z,W)\ :=\ a^4r^4*\phi(X,Y,Z,W)
\edm
for $X,Y,Z,W \in TM$. 
Since $\del_r\lrcorner\Phi$ locally is a $G_2$-structure on 
$\del_r^\perp$, $\Phi$ is a Spin(7)-structure on $\bar M$.
As in Section \ref{sec:acms}, given a characteristic connection on $M$ 
with respect to $\phi$, we construct a connection $\nabla$ with skew 
symmetric torsion $T$ on $M$ such that its lift $\bar \nabla$ to $\bar M$ 
with torsion $\bar T$ is the characteristic connection on $\bar M$ with 
respect to $\Phi$. Since we have $T=\bar T_{|TM}$ and $\del_r\lrcorner \bar
T=0$,  we have $\bar T=T=0$ 
in case of a parallel $\Spin(7)$ structure with respect to the 
Levi-Civita connection on $\bar M$, and thus $\nabla$ 
is the Levi-Civita connection on $M$.
\begin{dfn}\label{def:nabg2}
Let $(M,g,\phi)$ be a $G_2T $ manifold with characteristic  connection
$\nabla^c$. We define a 
metric connection $\nabla$ with skew symmetric torsion $T$ via
\bdm
T:=T^c-\frac{2a}{3}\phi.
\edm 
As in the metric almost contact case (see the comments in Definition 
\ref{def:normaldef}), $T$ cannot be computed abstractly,
but it is found through an educated guess and justified a posteriori from its
properties.
\end{dfn}
\begin{thm}\label{thm:g2str}
The connection $\nabla$ satisfies 
\bdm
\nabla\phi=a*\phi,
\edm
and $\Phi$ is parallel with respect to $\bar \nabla$, the appendant 
connection on $\bar M$. 
\end{thm}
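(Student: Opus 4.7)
My plan is to treat the two claims separately, since the first (that $\nabla\phi=a*\phi$ on $M$) is purely algebraic, while the second (that $\bar\nabla\Phi=0$ on $\bar M$) is a direct verification using the cone formulas together with the first claim and Lemma~\ref{lem:nablastarphi}.

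For the first claim, I write $\nabla = \nabla^c + S$, where the tensor $S$ is determined by the torsion difference: since $T - T^c = -\tfrac{2a}{3}\phi$, the identity $\nabla_X Y - \nabla^c_X Y = \tfrac{1}{2}(T-T^c)(X,Y,\cdot)^\sharp$ gives $S_Z X = -\tfrac{a}{3}P(Z,X)$. Because $\nabla^c\phi = 0$, the tensor derivative $(\nabla_Z\phi)(X,Y,W)$ reduces to
\begin{equation*}
(\nabla_Z\phi)(X,Y,W) = \tfrac{a}{3}\bigl[g(P(Z,X),P(Y,W)) - g(P(Z,Y),P(X,W)) + g(P(X,Y),P(Z,W))\bigr],
\end{equation*}
where I used $\phi(U,V,W)=g(U,P(V,W))=g(P(U,V),W)$ to convert each correction term. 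Now Lemma~\ref{lem:starphi} converts each inner product $g(P(\cdot,\cdot),P(\cdot,\cdot))$ into $*\phi$ plus a $(g,g)$-term. A short direct check shows the six resulting $(g,g)$-terms cancel. The three remaining $*\phi$ contributions combine using total antisymmetry of the $4$-form (so that $*\phi(Z,X,Y,W)=*\phi(X,Y,Z,W)$ and $*\phi(Z,Y,X,W)=-*\phi(Z,X,Y,W)$), and collapse to $a\,*\phi(Z,X,Y,W)$, proving the identity.

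For the second claim, I evaluate $(\bar\nabla_Z\Phi)(X_1,X_2,X_3,X_4)$ separately in the four relevant cases determined by whether $Z$ and the $X_i$ lie in $TM$ or are $\partial_r$, using the cone formulas
\begin{equation*}
\bar\nabla_{\partial_r}\partial_r = 0,\quad \bar\nabla_{\partial_r}X=\bar\nabla_X\partial_r = \tfrac{1}{r}X,\quad \bar\nabla_X Y = \nabla_X Y - \tfrac{1}{r}\bar g(X,Y)\partial_r
\end{equation*}
together with the definition of $\Phi$. When $Z=\partial_r$, the derivative in $r$ of the prefactors $a^3r^3$ or $a^4r^4$ is exactly compensated by the $\tfrac{1}{r}X_i$ terms, giving $0$. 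When $Z\in TM$ and one argument is $\partial_r$, the computation reduces to $a^3r^3(\nabla_Z\phi)(X_2,X_3,X_4) - a^4r^3\,*\!\phi(Z,X_2,X_3,X_4)$, which vanishes by the first claim. When $Z,X_1,\dots,X_4\in TM$, the $-\tfrac{1}{r}\bar g(Z,X_i)\partial_r$ correction in $\bar\nabla_Z X_i$ produces a sum $a^5r^4\sum_i(-1)^{i-1}g(Z,X_i)\phi(X_1,\dots,\widehat{X_i},\dots,X_4)$, which by Lemma~\ref{lem:nablastarphi} (valid since the first claim holds) is precisely the negative of $a^4r^4(\nabla_Z{*\phi})(X_1,X_2,X_3,X_4)$.

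The only real obstacle is the algebraic bookkeeping in the first claim, where one must track the antisymmetrizations and verify that the six metric terms cancel and that the three $*\phi$ contributions assemble correctly; the second claim is essentially a case-by-case $r$-scaling verification once the first claim and Lemma~\ref{lem:nablastarphi} are in hand.
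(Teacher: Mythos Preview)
Your proof is correct and follows essentially the same approach as the paper's own argument. For the first claim you express the torsion difference via $P$ and apply Lemma~\ref{lem:starphi} to convert $g(P(\cdot,\cdot),P(\cdot,\cdot))$ into $*\phi$ plus metric terms, which is exactly what the paper does (it writes the same three terms as $\phi(\cdot,\cdot,P(\cdot,\cdot))$ and then expands $a*\phi$ via the lemma); for the second claim your case split and use of Lemma~\ref{lem:nablastarphi} match the paper's four-case verification.
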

\begin{proof}
 We have for the Riemannian connection $\nabla^g$ on $M$
\bea[*]
 \nabla_X\phi(Y,Z,W)& = & X\phi(Y,Z,W)-\phi(\nabla^g_XY,Z,W)
-\phi(Y,\nabla^g_XZ,W)-\phi(Y,Z,\nabla^g_XW)\\
&&-\frac{1}{2}\phi(T(X,Y),Z,W)-\frac{1}{2}\phi(Y,T(X,Z),W)
-\frac{1}{2}\phi(Y,Z,T(X,W))\\
&=&(\nabla^c_X\phi)(Y,Z,W)+\frac{1}{2}\phi((T^c-T)(X,Y),Z,W)\\
&& +\frac{1}{2}\phi(Y,(T^c-T)(X,Z),W)+\frac{1}{2}\phi(Y,Z,(T^c-T)(X,W))
\eea[*]
and because $\nabla^c\phi=0$ we have
\bea[*]
\lefteqn{\nabla_X\phi(Y,Z,W)\ =}\\
& =&\frac{1}{2}[(T^c-T)(X,Y,P(Z,W))+(T^c-T)(X,Z,P(W,Y))+(T^c-T)(X,W,P(Y,Z))]\\
&=&\frac{a}{3}[\phi(X,Y,P(Z,W))+\phi(X,Z,P(W,Y))+\phi(X,W,P(Y,Z))].
\eea[*]
With Lemma \ref{lem:starphi} we obtain
\bea[*]
 a*\phi(X,Y,Z,W)&=&\frac{a}{3}[*\phi(X,Y,Z,W)+*\phi(X,Z,W,Y)+*\phi(X,W,Y,Z)]\\
&=&\frac{a}{3}[\phi(X,Y,P(Z,W))+\phi(X,Z,P(W,Y))+\phi(X,W,P(Y,Z))\\
&&-g(X,Z)g(Y,W)+g(X,W)g(Y,Z)-g(X,W)g(Z,Y)+g(X,Y)g(Z,W)\\
&&-g(X,Y)g(W,Z)+g(X,Z)g(W,Y)]\\
&=&\nabla_X\phi(Y,Z,W),
\eea[*]
which proves the first statement.
To show $\bar\nabla\Phi=0$ on $\bar M$ we look at several cases. Let 
always be $V,W,X,Y,Z\in TM$.\\
\underline{Case 1:} If $\del_r$ is one of the arguments,
we compute
\begin{align*}
(\bar\nabla_W\Phi)(\del_r,X,Y,Z)\ =\ &Wa^3r^3\phi(X,Y,Z)-\frac{1}{r}\Phi(W,X,Y,Z)-r^3a^3\phi(\nabla_WX,Y,Z)\\
&-r^3a^3\phi(X,\nabla_WY,Z)-r^3a^3\phi(X,Y,\nabla_WZ)
\end{align*}
\bdm
=\ a^3r^3(\nabla_W\phi)(X,Y,Z)-\frac{1}{r}\Phi(W,X,Y,Z)
\ =\ a^4r^3*\phi(W,X,Y,Z)-\frac{1}{r}\Phi(W,X,Y,Z)\ =\ 0.
\edm
\underline{Case 2:} 
If the direction of the derivative is equal to $\del_r$, we obtain
\begin{align*}
(\bar\nabla_{\del_r}\Phi)(X,Y,Z,W)&=\del_r(a^4r^4*\phi(X,Y,Z,W))-4\frac{1}{r}\Phi(X,Y,Z,W)\\
&=\ 4r^3a^4*\phi(X,Y,Z,W)-4\frac{1}{r}\Phi(X,Y,Z,W)\ =\ 0.
\end{align*}
\underline{Case 3:} 
If the direction of the derivative and one argument are equal to $\del_r$ we compute
\bdm
(\bar\nabla_{\del_r}\Phi)(\del_r,X,Y,Z)\ =\ 
\del_r(a^3r^3\phi(X,Y,Z))-3a^3r^3\frac{1}{r}\phi(X,Y,Z)\ =\ 0.
\edm

\underline{Case 4:} On $TM$ we have:
\bea[*]
\lefteqn{(\bar\nabla_V\Phi)(W,X,Y,Z)\ =}\\
&=& a^4r^4V*\phi(W,X,Y,Z)
-\Phi(\bar\nabla_VW,X,Y,Z)-\Phi(W,\bar\nabla_VX,Y,Z)
-\Phi(W,X,\bar\nabla_VY,Z)\\ &&-\Phi(W,X,Y,\bar\nabla_VZ)\\
&=&a^4r^4V*\phi(W,X,Y,Z)-\Phi(\nabla_VW-\frac{1}{r}\bar g(V,W)\del_r,X,Y,Z)
-\Phi(W,\nabla_VX-\frac{1}{r}\bar g(V,X)\del_r,Y,Z)\\
&&-\Phi(W,X,\nabla_VY-\frac{1}{r}\bar g(V,Y)\del_r,Z)-\Phi(W,X,Y,\nabla_VZ
-\frac{1}{r}\bar g(V,Z)\del_r)\\
&=&a^4r^4(\nabla_V*\phi)(W,X,Y,Z)+r^4a^5[g(V,W)\phi(X,Y,Z)-g(V,X)\phi(W,Y,Z)\\
&&+g(V,Y)\phi(W,X,Z)-g(V,Z)\phi(W,X,Y)],
\eea[*]
which is equal to zero due to Lemma \ref{lem:nablastarphi}.
\end{proof}
Conversely, given a $\Spin(7)$ structure $(\bar M,\bar g,\Phi,\bar P,\bar p)$ 
on $\bar M$ (see Section \ref{sec:som} for the definitions), 
$\del_r\lrcorner\Phi$ is a $G_2$ structure with respect to the metric 
$a^2g$ on $M=M\times\{1\}\subset \bar M$ and thus
\bdm
\phi\ :=\ \frac{1}{a^3}\del_r\lrcorner\Phi
\edm
defines a $G_2$ structure on $M$ with respect to the metric $g$. 
To prove the following theorem, we need
\begin{lem}\label{lem:hodge}
 If $*$ is the Hodge operator on $M$ with respect to $g$ and $*_{a^2g}$ 
is the Hodge operator on $M$ with respect to the metric $a^2g$, we have 
for any $3$-form $\omega$
\bdm
*_{a^2g}\omega=a*\omega.
\edm
\end{lem}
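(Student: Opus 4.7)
The plan is to derive the scaling law for the Hodge star under a conformal rescaling $g' = \lambda^2 g$ in general, then specialize to $\lambda = a$, $n = \dim M = 7$, and degree $k = 3$. The conclusion will follow from the identity $*_{g'}\omega = \lambda^{n-2k}\,*_g\omega$, which for our parameters gives the factor $a^{7-6}=a$.

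First I would record the two pieces of data that enter the definition of the Hodge operator and track how they transform. The Riemannian volume form satisfies $dV_{a^2 g} = a^n\, dV_g$, because in a local orthonormal frame for $g$ the corresponding orthonormal frame for $a^2 g$ is rescaled by $1/a$, and the dual coframe picks up a factor of $a$ on each of the $n$ factors. Simultaneously, the inner product induced on $k$-forms scales as $\langle\cdot,\cdot\rangle_{a^2 g} = a^{-2k}\,\langle\cdot,\cdot\rangle_g$: raising indices on a $k$-form via $(a^2 g)^{-1} = a^{-2} g^{-1}$ contributes one factor of $a^{-2}$ per index, hence $a^{-2k}$ in total.

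Next I would combine these in the defining relation of the Hodge star. For any $k$-form $\eta$,
\[
(*_{a^2 g}\omega)\wedge \eta \;=\; \langle \omega,\eta\rangle_{a^2 g}\, dV_{a^2 g} \;=\; a^{-2k}\,\langle \omega,\eta\rangle_g \cdot a^n\, dV_g \;=\; a^{n-2k}\,(*_g \omega)\wedge \eta.
\]
Since this holds for all $\eta$, non-degeneracy of the wedge pairing yields $*_{a^2 g}\omega = a^{n-2k}\,*_g\omega$. Substituting $n=7$ and $k=3$ gives the claimed identity $*_{a^2 g}\omega = a\,*_g\omega$.

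No real obstacle is anticipated; the only point that needs care is correctly tracking the two opposite contributions (the volume form scales up by $a^n$, while the inner product on $k$-forms scales down by $a^{2k}$), and making sure the exponent $n-2k=1$ is computed for the specific $G_2$ setting $n=7$, $k=3$ rather than some other degree that appears later when the lemma is applied.
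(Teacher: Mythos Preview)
Your argument is correct. The paper proves the same identity by a direct basis computation: starting from a $g$-orthonormal coframe $e^i$, it rewrites $*_{a^2g}e^{\{i,j,k\}}$ as $a^{-3}*_{a^2g}(ae)^{\{i,j,k\}}$, applies the Hodge star in the $a^2g$-orthonormal coframe $ae^i$ to get $(ae)^{\{1,\ldots,7\}\setminus\{i,j,k\}}$, and reads off the factor $a^{4}/a^{3}=a$. Your route is the more conceptual and more general one: you derive the full conformal scaling law $*_{\lambda^2 g}=\lambda^{n-2k}\,*_g$ from the defining wedge identity by separately tracking the scaling of the volume form and of the induced inner product on $k$-forms, and then specialize to $n=7$, $k=3$. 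Both arguments are short; yours has the advantage of making transparent why the exponent is $n-2k$ (and hence why the $G_2$ case is special), while the paper's basis computation is slightly more self-contained in that it does not invoke the abstract defining relation.
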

\begin{proof}
 Let $e_i$ for $i=1..7$ be an orthonormal basis with dual basis $e^i$ on 
$M$ with respect to $g$. Then $\frac{1}{a}e_i$ with dual $ae^i$ is a 
orthonormal basis with respect to $a^2g$. We define 
$e^{\{i,j,k\}}:=e^i\wedge e^j\wedge e^k$ and 
$e^{\{i,j,k,j\}}:=e^i\wedge e^j\wedge e^k\wedge e^l$ as well as 
$(se)^{\{i,j,k\}}:=se^i\wedge se^j\wedge se^k$ for $s\in\R$ and
$(se)^{\{i,j,k,j\}}$ respectively. Then we have
\bdm
*_{a^2g}e^{\{i,j,k\}}=\frac{1}{a^3}*_{a^2g}(ae)^{\{i,j,k\}}=\frac{1}{a^3}(ae)^{\{1,..,7\}\backslash\{i,j,k\}}=\frac{1}{a^3}a^4e^{\{1,..,7\}\backslash\{i,j,k\}}=a*e^{\{i,j,k\}},
\edm
which proves the lemma.
\end{proof}
\begin{thm}
 Given a $\Spin(7)$ structure on $\bar M$ with characteristic connection 
$\bar\nabla$ being the lift of a connection $\nabla$ on $M$, we have for 
the $G_2$ structure $\phi$ induced by $\Phi$
\bdm
\nabla\phi=a*\phi
\edm
and the characteristic connection on $(M,g,\phi)$ is given by 
$T^c=T+\frac{2a}{3}\phi$.
\end{thm}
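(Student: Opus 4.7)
The plan is to read $\nabla\phi = a\,{*}\phi$ off directly from the hypothesis $\bar\nabla\Phi = 0$, and then to identify the torsion of the characteristic connection of $(M,g,\phi)$ by constructing a candidate connection with the claimed torsion and checking that it preserves $\phi$; uniqueness of the characteristic connection (Theorem \ref{char-conn-unique}) will close the argument.

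For the first identity I would evaluate $(\bar\nabla_W\Phi)(\del_r,X,Y,Z) = 0$ at a point of $M = M\times\{1\}$ with $W,X,Y,Z\in TM$, extended constantly in the $r$-direction. The lift formulas $\bar\nabla_W\del_r = W/r$ and $\bar\nabla_W X = \nabla_W X - \bar g(W,X)\del_r/r$ kill every term of $\Phi$ in which $\del_r$ appears twice, and what survives is
\[
0\ =\ a^3(\nabla_W\phi)(X,Y,Z)\, -\, \Phi(W,X,Y,Z).
\]
The remaining ingredient is the value of $\Phi$ on $TM$: since $\del_r$ is a unit normal for $\bar g$ and $\Phi$ is a $\Spin(7)$ form, $\Phi|_{TM}$ equals the Hodge dual of $\del_r\lrcorner\Phi|_{TM}$ with respect to $\bar g|_{TM} = a^2 g$. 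Because $\del_r\lrcorner\Phi|_{TM} = a^3\phi$ by the definition of $\phi$, and $*_{a^2g}\omega = a\,{*}\omega$ for $3$-forms $\omega$ by Lemma \ref{lem:hodge}, one obtains $\Phi|_{TM} = a^4\,{*}\phi$, whence $\nabla\phi = a\,{*}\phi$.

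For the second identity I would introduce the metric connection $\tilde\nabla$ defined by $\tilde\nabla_X Y := \nabla_X Y + \tfrac{a}{3}P(X,Y)$, so that its skew torsion equals $T + \tfrac{2a}{3}\phi$, and verify that $\tilde\nabla\phi = 0$. Substituting into $(\tilde\nabla_X\phi)(Y,Z,W)$ and using $\nabla\phi = a\,{*}\phi$, the three correction terms produce
\[
-\frac{a}{3}\bigl[\phi(X,Y,P(Z,W)) + \phi(X,Z,P(W,Y)) + \phi(X,W,P(Y,Z))\bigr].
\]
Lemma \ref{lem:starphi} rewrites each cyclic summand as $*\phi$ plus a pair of $g$-quadratic terms; the metric terms cancel around the cycle, and since $*\phi$ is a $4$-form and cyclic permutations of $(Y,Z,W)$ are even, the three summands contribute exactly $3\,{*}\phi(X,Y,Z,W)$. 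This precisely cancels $a\,{*}\phi(X,Y,Z,W)$, so $\tilde\nabla\phi = 0$. Because $G_2\subset\SO(7)$ acts irreducibly and not via its adjoint representation, Theorem \ref{char-conn-unique} forces $\tilde\nabla = \nabla^c$, and thus $T^c = T + \tfrac{2a}{3}\phi$.

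The only delicate point is keeping careful track of the two metrics $g$ and $\bar g|_{TM} = a^2 g$ in the step identifying $\Phi|_{TM}$ with $a^4\,{*}\phi$; everything else is essentially a streamlined converse of the computation already carried out in Theorem \ref{thm:g2str}.
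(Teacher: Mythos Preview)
Your proof is correct and follows essentially the same route as the paper's: both derive $\nabla\phi = a\,{*}\phi$ by expanding $(\bar\nabla_W\Phi)(\del_r,X,Y,Z)=0$ and identifying $\Phi|_{TM}$ with $a^4\,{*}\phi$ via Lemma~\ref{lem:hodge}, and both verify that the connection with torsion $T+\tfrac{2a}{3}\phi$ annihilates $\phi$ using Lemma~\ref{lem:starphi} and the cyclic cancellation, concluding by uniqueness of the characteristic connection. The paper cites Lemma~8 of \cite{B93} for the relation $\Phi|_{TM} = *_{a^2g}(\del_r\lrcorner\Phi)$ where you invoke it as a general property of $\Spin(7)$ forms, and it refers back to the computation in Theorem~\ref{thm:g2str} for the cyclic identity; otherwise the arguments are identical.
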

\begin{proof}
 We have for $W,X,Y,Z\in TM$
\begin{align*}
(\nabla_W\phi)(X,Y,Z)\ =&\ \frac{1}{a^3}[W\Phi(\del_r,X,Y,Z)\\
&-\Phi(\del_r,\nabla_WX,Y,Z)-\Phi(\del_r,X,\nabla_WY,Z)-\Phi(\del_r,X,Y,\nabla_WZ)]\\
\ =&\ \frac{1}{a^3}[(\bar\nabla_W\Phi)(\del_r,X,Y,Z)+\Phi(\bar\nabla_W\del_r,X,Y,Z)]=\frac{1}{a^3}\Phi(W,X,Y,Z).
\end{align*}
With Lemma $8$ of \cite{B93} and the definition of $\phi$ we conclude 
$\Phi|_{TM}=*_{a^2g}(\del_r\lrcorner\Phi)=*_{a^2g}(a^3\phi)=a^4*\phi$, 
where $*_{a^2g}$ is the Hodge operator on $M\subset\bar M$ with respect 
to the metric $a^2g$. The last equality follows from Lemma \ref{lem:hodge}. 
Thus we get
\bdm
\nabla\phi=a*\phi.
\edm
For the connection $\nabla^c$ with torsion $T^c=T+\frac{2a}{3}\phi$ we 
calculate as in the proof of Theorem \ref{thm:g2str}
\begin{align*}
 (\nabla^c_X\phi)(Y,Z,W)\ =&\ (\nabla_X\phi)(Y,Z,W)+\frac{1}{2}[(T-T^c)(X,Y,P(Z,W))
+(T-T^c)(X,Z,P(W,Y))\\
&+(T-T^c)(X,W,P(Y,Z))]\\
\ =&\ a*\phi(X,Y,Z,W)-\frac{a}{3}[\phi(X,Y,P(Z,W))+\phi(X,Z,P(W,Y))+\phi(X,W,P(Y,Z))]
\end{align*}
which is equal to zero due to Lemma \ref{lem:starphi}. Since the
characteristic 
connection of a $G_2$ manifold is unique, this proves the Theorem.
\end{proof}
\begin{NB}
As in the metric almost contact case, $T= T^c - \frac{2a}{3}\phi$ measures
the `deviation' of the $G_2$ structure from a nearly parallel $G_2$ structure;
for then, $ T^c = \frac{2a}{3}\phi$, i.\,e.~$T=0$ and thus $\nabla=\nabla^g$
lifts to the Levi-Cita connection on $\bar M$, reflecting the fact
that the $\Spin(7)$ structure on the cone is then integrable. That $\nabla$
plays indeed a geometric role beyond being an auxiliary tool, and that
this role is that of a the Levi-Civita connection
for a nearly parallel $G_2$ manifold,  is confirmed by Theorem
\ref{thm:g2str}, since it states that the equation $\nabla^g\phi=a*\phi$
for the nearly parallel case generalizes to $\nabla\phi=a*\phi$ for
any $G_2 T$ manifold.

\end{NB}
%
\subsection{The classification of $G_2$ structures and the corresponding 
classification of $\Spin(7)$ structures on the cone}
%
We will now discuss the classification of Fern\'andez \cite{F85} of 
$\Spin(7)$ structures on $\bar M$ given in Section \ref{sec:som}, and
compute the correspondence to the classification of $G_2$ structures 
\cite{FG82}. Again we are only interested in structures carrying a 
characteristic connection ($G_2$ structures of class 
$\mathcal W_1\oplus\mathcal W_3\oplus\mathcal W_4$).
We write $X_M$ for the projection on $TM$ of a vector field $X$ in $T\bar M$.
We summarize some useful identities:
\begin{lem}\label{lam:barP} 
\begin{enumerate}
\item[]
\item $P$ can be expressed through $\phi$ on $TM$:\ $P(Y,Z) \, =\, \sum_l\phi(e_l,Y,Z)e_l$.
\item
For any metric connection $\tilde\nabla$ with skew torsion on $M$, we have:
\bea[*]
(\tilde\nabla_X\phi)(Y,Z,V)&=& g((\tilde\nabla_XP)(Y,Z),V),\\
(\tilde\nabla_XP)(Y,Z) & =& \sum_lg(e_l,(\tilde\nabla_XP)(Y,Z))e_l
=\sum_l(\tilde\nabla_X\phi)(e_l,Y,Z)e_l.
\eea[*]
\item For $\nabla$, this can be simplified to \ 
$(\nabla_XP)(Y,Z)\, =\, a\sum_l*\phi(X,e_l,Y,Z)e_l$.
\item $P,\, \phi$, and $\bar P$ are related by $(X,Y,Z\in TM)$
\bdm
\bar g(\bar P(X,Y,Z),\del_r)
=-a^3r^3\phi(X,Y,Z),\quad
\bar P(\del_r,X,Y)=arP(X,Y),\quad\bar P(Y,Z,V)_M=ar^2(\nabla_YP)(Z,V).
\edm
\item The derivative of $\Phi$ on $\bar M$ can be expressed in 
terms of $\phi$ on $M$ $(X,Y,Z,V,W\in TM)$:
\bdm
(\bar\nabla^{\bar g}_X\Phi)(\del_r,Z,V,W)=a^3r^3[(\nabla^g-\nabla)_X\phi](Z,V,W),
(\bar\nabla^{\bar g}_X\Phi)(Y,Z,V,W)=a^4r^4[(\nabla^g-\nabla)_X*\phi](Y,Z,V,W).
\edm
\end{enumerate}
\end{lem}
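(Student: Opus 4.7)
The five statements are of increasing technical difficulty but share a common strategy: unfold the defining identities relating $\phi$, $P$ and $*\phi$ on $M$ with $\Phi$, $\bar P$ and $\bar\nabla^{\bar g}$ on $\bar M$, then invoke the characteristic equation $\nabla\phi=a*\phi$ (Theorem~\ref{thm:g2str}) and the explicit $4$-form identity from Lemma~\ref{lem:nablastarphi} to close each computation.

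I would begin with (i) by simply expanding $P(Y,Z)$ in a local orthonormal frame: by definition $\phi(X,Y,Z)=g(X,P(Y,Z))$, hence $g(e_l,P(Y,Z))=\phi(e_l,Y,Z)$. For (ii), differentiate $\phi(Y,Z,V)=g(V,P(Y,Z))$ along $X$; since $\tilde\nabla$ is metric, the $g(\tilde\nabla_X V,P(Y,Z))$-term produced by Leibniz cancels the corresponding $\phi$-term containing $\tilde\nabla_X V$, leaving exactly $g(V,(\tilde\nabla_X P)(Y,Z))$. The frame decomposition in the second half of (ii) is then immediate, and (iii) is obtained by specialising to $\tilde\nabla=\nabla$ and using $\nabla\phi=a*\phi$.

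For (iv), I would evaluate $\bar g(\bar P(\,\cdot\,),V)=\Phi(\cdot,V)$ against either $\del_r$ or a lift of a vector field on $M$, exploiting the two definitions $\Phi(\del_r,X,Y,Z)=a^3r^3\phi(X,Y,Z)$ and $\Phi|_{TM}=a^4r^4*\phi$ together with $\bar g|_{TM}=a^2r^2g$. The first identity drops out of total skew-symmetry of $\Phi$, the second from matching on each basis vector (the $\del_r$-component vanishes because $\Phi(\del_r,\del_r,\cdot,\cdot)=0$), and the third is precisely the content of $\nabla\phi=a*\phi$ rewritten through (ii) -- this is the point where Theorem~\ref{thm:g2str} enters crucially.

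The hard part is (v), which I would handle via the O'Neill-type formulas $\bar\nabla^{\bar g}_X\del_r=\tfrac{1}{r}X$ and $\bar\nabla^{\bar g}_XY=\nabla^g_XY-\tfrac{1}{r}\bar g(X,Y)\del_r$ already recorded in Section~\ref{ch:constrcone}. For $(\bar\nabla^{\bar g}_X\Phi)(\del_r,Z,V,W)$, expanding via the Leibniz rule and using $\bar\nabla^{\bar g}_X\del_r=\tfrac{1}{r}X$ produces the bulk term $a^3r^3(\nabla^g_X\phi)(Z,V,W)$ together with a correction $-a^4r^3{*\phi}(X,Z,V,W)$; by Theorem~\ref{thm:g2str} the latter is exactly $-a^3r^3(\nabla_X\phi)(Z,V,W)$, giving the claim. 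For $(\bar\nabla^{\bar g}_X\Phi)(Y,Z,V,W)$ with all arguments in $TM$, the $-\tfrac{1}{r}\bar g(X,\,\cdot\,)\del_r$ corrections produce four terms of the form $a^5r^4g(X,\cdot)\phi(\cdot,\cdot,\cdot)$, and the main challenge is to recognise that their sum is precisely $-a^4r^4(\nabla_X{*\phi})(Y,Z,V,W)$ as predicted by Lemma~\ref{lem:nablastarphi} (modulo a careful sign bookkeeping with the cyclic symmetries of $\phi$). Matching these against the naive $a^4r^4(\nabla^g_X{*\phi})$-piece then yields the advertised identity $(\bar\nabla^{\bar g}_X\Phi)(Y,Z,V,W)=a^4r^4[(\nabla^g-\nabla)_X{*\phi}](Y,Z,V,W)$.
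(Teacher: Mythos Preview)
Your treatment of (i)--(iv) is essentially the paper's: expand in a frame, use metricity for (ii), specialise to $\nabla$ for (iii), and test $\bar P$ against $\del_r$ and tangential vectors using the two defining formulas for $\Phi$ together with $\nabla\phi=a*\phi$ for (iv).

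For (v) your route is correct but genuinely different from the paper's. You expand $(\bar\nabla^{\bar g}_X\Phi)$ directly through the O'Neill formulas for $\bar\nabla^{\bar g}$, obtaining a bulk term $a^3r^3(\nabla^g_X\phi)$ (respectively $a^4r^4(\nabla^g_X*\phi)$) plus correction terms from $\bar\nabla^{\bar g}_X\del_r=\tfrac{1}{r}X$ and the $\del_r$-components of $\bar\nabla^{\bar g}_XY$; you then identify these corrections as $-a^3r^3(\nabla_X\phi)$ via $\nabla\phi=a*\phi$, and as $-a^4r^4(\nabla_X*\phi)$ via Lemma~\ref{lem:nablastarphi}. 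The paper instead writes $\bar\nabla^{\bar g}=\bar\nabla-\tfrac{1}{2}\bar T$ and invokes $\bar\nabla\Phi=0$ from Theorem~\ref{thm:g2str}, so that $(\bar\nabla^{\bar g}_X\Phi)$ collapses to a pure torsion expression; since $\bar T$ is the lift of $T$ and $\tfrac{1}{2}T(X,\cdot)=(\nabla_X-\nabla^g_X)(\cdot)$, the Leibniz rule produces $(\nabla^g-\nabla)_X\phi$ and $(\nabla^g-\nabla)_X*\phi$ in one stroke, with no appeal to Lemma~\ref{lem:nablastarphi} and no sign bookkeeping. The paper's argument is shorter and cleaner; yours is more explicit and shows concretely where each piece comes from, at the cost of the extra identification step via Lemma~\ref{lem:nablastarphi}.
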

\begin{proof}
Statements (1)-(3) are easily checked. To prove statement (4) for $X,Y,Z\in TM$,
we have
\bdm
\bar g(\bar P(\del_r,X,Y),Z)=\Phi(\del_r,X,Y,Z)=a^3r^3\phi(X,Y,Z)=ar\bar g(P(X,Y),Z),
\edm
thus  $\bar P(\del_r,X,Y)=arP(X,Y)$. Furthermore,
\bea[*]
 \bar g(X,\bar P(Y,Z,V))&=&\Phi(Y,Z,V,X)\ =\ a^3r^4(\nabla_Y\phi)(Z,V,X)
\ =\ a^3r^4g(X,(\nabla_YP)(Z,V))\\
&=&ar^2\bar g(X,(\nabla_YP)(Z,V)),
\eea[*]
and thus $\bar P(Y,Y,V)_M=ar^2(\nabla_YP)(Z,V)$. For (5) and
vector fields  $X,Y,Z,V,W\in TM$, we calculate 
\bea[*]
\lefteqn{2(\bar\nabla^{\bar g}_X\Phi)(\del_r,Z,V,W)\ =}\\
&=&2(\bar\nabla_X\Phi)(\del_r,Z,V,W)+\Phi(\del_r,\bar T(X,Z),V,W)
+\Phi(\del_r,Z,\bar T(X,V),W)+\Phi(\del_r,Z,V,\bar T(X,W))\\
&=&a^3r^3[\phi(T(X,Z),V,W)+\phi(Z,T(X,V),W)+\phi(Z,V,T(X,W))]\\
&=&2a^3r^3[\phi((\nabla_X-\nabla^g_X)Z,V,W)
+\phi(Z,(\nabla_X-\nabla^g_X)V,W)+\phi(Z,V,(\nabla_X-\nabla^g_X)W)]\\
&=&-2a^3r^3[(\nabla-\nabla^g)_X\phi](Z,V,W),
\eea[*]
and similarly
\bea[*]
\lefteqn{(\bar\nabla^{\bar g}_X\Phi)(Y,Z,V,W)\ =}\\
&=&\frac{1}{2}[\Phi(\bar T(X,Y),Z,V,W)+\Phi(Y,\bar T(X,Z),V,W)
+\Phi(Y,Z,\bar T(X,V),W)+\Phi(Y,Z,V,\bar T(X,W))]\\
&=&\frac{a^4r^4}{2}[*\phi(T(X,Y),Z,V,W)+*\phi(Y,T(X,Z),V,W)
+*\phi(Y,Z,T(X,V),W)+*\phi(Y,Z,V,T(X,W))]\\
&=&-a^4r^4[(\nabla-\nabla^g)_X*\phi](Y,Z,V,W)
=a^4r^4[(\nabla^g-\nabla)_X*\phi](Y,Z,V,W),
\eea[*]
which finishes the proof.
\end{proof}
\begin{NB}\label{rem:eindeutnabla}
Since the  characteristic connection of the $\Spin(7)$  structure on $\bar M$
is unique (see Section \ref{sec:som}), we can conclude for
any such structure satisfying $\bar\nabla^{\bar g}\Phi=0$ that $\nabla=\nabla^g$ and thus 
$\nabla^g\phi=a*\phi$ and the $G_2$ structure is of class $\mathcal W_1$.
 Conversely, given a connection $\nabla$ with skew symmetric torsion and 
$\nabla\phi=a*\phi$ we construct $\nabla^c$ via $T^c:=T-\frac{2a}{3}\phi$, 
which satisfies $\nabla^c\phi=0$ and thus is unique. Hence a metric 
connection with skew symmetric torsion and the property $\nabla\phi=*\phi$ is unique.
\end{NB}
\begin{dfn}
For any tensor $R$ on $M$ let 
\bdm
R\llcorner X\ :=\ R(-,X).
\edm
We extend the metric $g$ to arbitrary $k$-tensors $R,S$ via an orthonormal
frame $e_1,\ldots,e_n$
\bdm
g(R,S)\ :=\ \sum_{i_1,..,i_k=1}^nR(e_{i_1},..,e_{i_k})S(e_{i_1},..,e_{i_k}).
\edm 
\end{dfn}
\begin{lem}\label{lem:spin7str}
A $\Spin(7)$ structure on $\bar M$ is of class $\mathcal U_1$ if and 
only if on $M$
\begin{itemize} 
\item $g(\nabla^g\phi,*\phi)=ag(*\phi,*\phi)$, and 
\item for every $X\in TM$ we have 
$g(*\phi,[(\nabla-\nabla^g)*\phi]\llcorner X)
=3g(\phi,[(\nabla-\nabla^g)\phi]\llcorner X)$.
\end{itemize}
The structure on $\bar M$ is of class $\mathcal U_2$ if and only if 
the following conditions are satisfied for $X,Y,Z,X_1,..,X_4\in TM$ and a local 
orthonormal frame $e_1,..,e_7$ of $TM$:
\begin{itemize}
\item $\delta \Phi|_{TM}=0$ on $TM$, which is equivalent to 
$0=\sum\limits_{i=1}^7[(\nabla^g-\nabla)_{e_i}*\phi](e_i,X,Y,Z)$
\item $0=\sum\limits^4_{i=1}\sum\limits_{l<j<8}(-1)^i\delta\phi(e_l,e_j)
\phi(e_l,e_j,X_i)\phi(X_1,..,\hat X_i,..,X_4)$
\item 
$28[(\nabla^g-\nabla)_W*\phi](X_1,X_2,X_3,X_4)
=\sum\limits^4_{i=1}\sum\limits_{l<j<8}(-1)^{i+1}\delta\phi(e_l,e_j)\phi(e_l,e_j,X_i)*\phi(W,X_1,..,\hat X_i,..,X_4)$.
\end{itemize}
\end{lem}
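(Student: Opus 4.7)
My strategy is to translate Fern\'andez's defining conditions for the classes $\mathcal U_1$ and $\mathcal U_2$ of $\Spin(7)$ structures on $\bar M$ into identities on $M$ by means of Lemma \ref{lam:barP}, paralleling Theorem \ref{thm:ocsclass} in the metric almost contact case. Under $\Spin(7)$, the module $\Lambda^5(T^*\bar M)$ decomposes into the $8$-dimensional summand $\Lambda^1 \wedge \Phi$ and a $48$-dimensional complement; class $\mathcal U_1$ is characterised by $d\Phi \in \Lambda^1\wedge\Phi$, and class $\mathcal U_2$ by the vanishing of this $\Lambda^1\wedge\Phi$-projection of $d\Phi$. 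Since $\bar\nabla^{\bar g}\Phi$ and $d\Phi$ differ only by antisymmetrisation, both classes can equivalently be phrased in terms of $\bar\nabla^{\bar g}\Phi$.

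The first step is to apply Lemma \ref{lam:barP}(5) to rewrite $\bar\nabla^{\bar g}\Phi$ in terms of the difference tensor $(\nabla^g-\nabla)$ acting on $\phi$ (for components involving $\del_r$) and on $*\phi$ (for purely tangential components). The $r$-scalings appearing in that lemma will cancel against those from the subsequent projection formulas, producing $r$-independent tensorial identities on $M$, as they must since the claimed conditions involve only base data.

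For class $\mathcal U_1$, I would extract the Lee-type $1$-form $\theta$ defined up to normalisation by $d\Phi = \theta\wedge\Phi$ via contraction of $d\Phi$ against $\Phi$. Splitting $\theta = \theta_0\,dr + \theta_M$ with $\theta_M$ tangent to $M$, the $\theta_0$-component -- obtained by pairing $d\Phi$ with $dr\wedge\Phi$ -- reduces through Step 1 and the identity $\nabla\phi = a*\phi$ of Theorem \ref{thm:g2str} to the scalar equation $g(\nabla^g\phi,*\phi) = a\,g(*\phi,*\phi)$, since the $\nabla$-part contributes exactly $a g(*\phi,*\phi)$. The tangential component $\theta_M$, tested against $X \in TM$, yields the second listed compatibility condition, the factor $3$ arising from the three-fold antisymmetrisation of $\phi$ against the one-fold symmetrisation of $*\phi$ when projecting $\iota_X d\Phi$ onto the $\Phi$-span.

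For class $\mathcal U_2$, the $\Lambda^1\wedge\Phi$-projection of $d\Phi$ must vanish; equivalently, via $\Spin(7)$-duality and $\delta\Phi = -{*}d{*}\Phi$, the $\Lambda^1$-trace of $\delta\Phi$ vanishes. The condition $\delta\Phi|_{TM}=0$ and its reformulation follow directly by contracting $\sum_i(\bar\nabla^{\bar g}_{e_i}\Phi)(e_i,\cdot,\cdot,\cdot)$ with the expression from Lemma \ref{lam:barP}(5). The third equation is the cone-analogue of Fern\'andez's explicit formula for the $\mathcal U_2$-component of the intrinsic torsion, now re-expressed through $\delta\phi$ and $\phi$ after the translation of $\delta\Phi$ to $M$; and the second equation is the algebraic consistency constraint ensuring this explicit expression has vanishing $\Lambda^1\wedge\Phi$-component, namely that its contraction with $\Phi$ in the sense above is zero. \textbf{The main obstacle} is the combinatorial bookkeeping of the $\Spin(7)$-equivariant projection $\Lambda^5 \to \Lambda^1\wedge\Phi$ in the cone frame -- tracking the $r$-scalings from Lemma \ref{lam:barP} so that they cancel to produce $r$-independent identities, and pinning down the numerical coefficients $3$ and $28$, both of which come from dimensions of the relevant $\Spin(7)$-irreducible summands and must be derived by a direct frame computation.
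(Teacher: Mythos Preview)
Your overall strategy --- translate Fern\'andez's $\Spin(7)$ class conditions from $\bar M$ to $M$ using Lemma \ref{lam:barP} and then split into the cases $\partial_r$ versus $TM$ --- is exactly the paper's approach. The gap is that you have the characterisations of $\mathcal U_1$ and $\mathcal U_2$ swapped, and this propagates through your outline.

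In Fern\'andez's classification $\mathcal U_1$ is the $48$-dimensional irreducible and $\mathcal U_2$ the $8$-dimensional Lee-form summand. Hence \emph{class $\mathcal U_1$} means the Lee form vanishes, not that $d\Phi\in\Lambda^1\wedge\Phi$. Concretely (Lemma 4.2 of \cite{F85}), class $\mathcal U_1$ is the condition
\[
0\ =\ -6\,\delta\Phi(\bar p(X))\ =\ \sum_{i,j,k=1}^{8}(\bar\nabla^{\bar g}_{\bar e_i}\Phi)(\bar e_j,\bar e_k,\bar P(\bar e_i,\bar e_j,\bar e_k),X)
\]
for all $X\in T\bar M$. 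The paper splits into $X=\partial_r$ (yielding the scalar condition $g(\nabla^g\phi,*\phi)=a\,g(*\phi,*\phi)$ via Lemma \ref{lam:barP}(2)--(4) and the identity $\nabla\phi=a*\phi$) and $X\in TM$ (yielding the second condition, the factor $3$ coming from the three equal contributions when exactly one of $i,j,k$ equals $8$). Your $\mathcal U_1$ paragraph in fact describes this computation correctly once one reads it as ``compute the Lee form and set it to zero'' rather than ``$d\Phi=\theta\wedge\Phi$''.

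For class $\mathcal U_2$ the situation is the opposite of what you wrote: the intrinsic torsion lies entirely in the $8$-dimensional summand, so $\bar\nabla^{\bar g}\Phi$ is \emph{determined} by the Lee-form data. This is Fern\'andez's explicit formula
\[
28(\bar\nabla^{\bar g}_W\Phi)(X_1,\ldots,X_4)=-\sum_{i=1}^4(-1)^{i+1}\bigl[\delta\Phi(\bar p(X_i))\,\Phi(W,X_1,..,\hat X_i,..,X_4)+7\,\bar g(W,X_i)\,\delta\Phi(X_1,..,\hat X_i,..,X_4)\bigr],
\]
and the three listed conditions on $M$ arise from straightforward case analysis on this identity: $W=X_1=\partial_r$ forces $\delta\Phi|_{TM}=0$ (first bullet, since the right-hand side runs over a spanning set of $\Lambda^3 TM$); $W=\partial_r$ with $X_i\in TM$ gives the second bullet; $W,X_i\in TM$ gives the third. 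Along the way one needs the auxiliary identity $\delta\Phi(\bar p(X))=-\sum_{l<j}\delta\phi(e_l,e_j)\phi(e_l,e_j,X)$ for $X\in TM$, obtained from Lemma \ref{lam:barP}(5). So the second bullet is not an ``algebraic consistency constraint'' but simply the $W=\partial_r$ case of the $\mathcal U_2$ formula, and the coefficient $28$ is not derived from representation dimensions here --- it is inherited directly from Fern\'andez's formula.
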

\begin{proof}
We consider a local $\bar g$-orthonormal frame
$\bar e_1=\frac{1}{ar}e_1,..,\bar e_7=\frac{1}{ar}e_7,e_8=\del_r$ of 
$T\bar M$ such that $e_1,..,e_7$ is a local orthonormal frame of $TM$. With Lemma 4.2 
of \cite{F85} a $\Spin(7)$ structure is defined to be of class $\mathcal U_1$ 
if and only if
\bdm
0\ =\ -6\delta\Phi(\bar p(X))
\ =\ \sum^8_{i,k,j=1}(\bar\nabla^{\bar g}_{\bar e_i}\Phi)(\bar e_j,\bar e_k,
\bar P(\bar e_i,\bar e_j,\bar e_k),X).
\edm
For $X\in TM$ we have
\bea[*]
0&=& -6\delta\Phi(\bar p(X))\ =\ \sum^8_{i,k,j=1}(\bar\nabla^{\bar g}_{\bar
  e_i}\Phi)
(\bar e_j,\bar e_k,\bar P(\bar e_i,\bar e_j,\bar e_k),X)\\
& =& \sum^7_{i,k,j=1}(\bar\nabla^{\bar g}_{\bar e_i}\Phi)(\bar e_j,\bar e_k,\bar
P(\bar e_i,\bar e_j,\bar e_k),X)+2\sum^7_{i,j=1}(\bar\nabla^{\bar g}_{\bar
  e_i}\Phi)(\bar e_j,\del_r,\bar P(\bar e_i,\bar e_j,\del_r),X)\\
&=&\frac{1}{a^6r^6}\sum^7_{i,k,j=1}(\bar\nabla^{\bar g}_{e_i}\Phi)
(e_j,e_k,ar^2(\nabla_{e_i}P)(e_j,e_k)+\bar g(\bar P(e_i,e_j,e_k),\del_r)\del_r,X)\\
&&+2\frac{1}{a^4r^4}\sum^7_{i,j=1}(\bar\nabla^{\bar g}_{e_i}\Phi)
(e_j,\del_r,arP(e_i, e_j),X)\\
&=&\frac{1}{a^5r^4}\sum^7_{i,k,j=1}a^4r^4[(\nabla^g-\nabla)_{e_i}*\phi]
(e_j,e_k,(\nabla_{e_i}P)(e_j,e_k),X)\\
&&-\frac{1}{a^3r^3}\sum^7_{i,k,j=1}\phi(e_i,e_j,e_k)
(\bar\nabla^{\bar g}_{e_i}\Phi)(e_j,e_k, \del_r,X)
-2\frac{a^3r^3}{a^3r^3}\sum^7_{i,j=1}([\nabla^{g}-\nabla]_{e_i}\phi)(e_j,P(e_i, e_j),X)\\
&=&\sum^7_{i,k,j,l=1}[(\nabla^g-\nabla)_{e_i}*\phi](e_j,e_k,*\phi(e_i,e_l,e_j,e_k)e_l,X)
-3\sum^7_{i,k,j=1}\phi(e_i,e_j,e_k)([\nabla^{g}-\nabla]_{e_i}\phi)(e_j,e_k,X)\\
&=&g(*\phi,(\nabla^g-\nabla)*\phi\llcorner X)-3g(\phi,(\nabla^g-\nabla)\phi\llcorner X).
\eea[*]
In case $X=\del_r$, we deduce from Lemma \ref{lam:barP}:
\begin{align*}
 0=&\sum^7_{i,j,k=1}(\bar\nabla^{\bar g}_{e_i}\Phi)
(e_j,e_k,\bar P(e_i,e_j,e_k),\del_r) \ =\  ar^2\sum^7_{i,j,k=1}
(\bar\nabla^{\bar g}_{e_i}\Phi)(e_j,e_k,(\nabla_{e_i} P)(e_j,e_k),\del_r)\\
 =&-a^4r^5\sum^7_{i,j,k=1}[(\nabla^{g}-\nabla)_{e_i}\phi]
(e_j,e_k,(\nabla_{e_i} P)(e_j,e_k))\\
=&-a^4r^5[\sum^7_{i,j,k,l=1}(\nabla^{g}_{e_i}\phi)(e_j,e_k,e_l)
(\nabla_{e_i} \phi)(e_j,e_k,e_l)-\sum^7_{i,j,k,l=1}(\nabla_{e_i}\phi)
(e_j,e_k,e_l)(\nabla_{e_i} \phi)(e_j,e_k,e_l)]\\
=&-a^4r^5[g(\nabla^g\phi,\nabla\phi)-g(\nabla\phi,\nabla\phi)]
=-a^5r^5[g(\nabla^g\phi,*\phi)-ag(*\phi,*\phi)],
\end{align*}
and thus we have $g(\nabla^g\phi,*\phi)=ag(*\phi,*\phi)$.
A $\Spin(7)$ structure is of class $\mathcal U_2$ if it satisfies
\be\label{eq:caseW2}
\begin{split}
28(\bar\nabla^{\bar g}_W\Phi)(X_1,X_2,X_3,X_4)=-\sum^4_{i=1}(-1)^{i+1}[&\delta\Phi(\bar p(X_i))\Phi(W,X_1,..,\hat X_i,..,X_4)\\
&+7\bar g(W,X_i)\delta\Phi(X_1,..,\hat X_i,..,X_4)].
\end{split}
\ee
Suppose $W=X_1=\del_r$ and $X_2,X_3,X_4\in TM$. For a $3$-form $\xi$ on $TM$ we have
\bdm
\bar g(\bar p(\del_r),\xi)=\bar g(\del_r,\bar P(\xi))=-\Phi(\del_r,\xi)
=-a^3r^3\phi(\xi)=\bar g(-a^3r^3\phi,\xi)
\edm
and thus $\bar p(\del_r)=-a^3r^3\phi$. Since $\del_r\lrcorner \bar T=0$ 
we have $\bar \nabla^{\bar g}_{\del_r}\Phi=0$ and the defining relation of 
the class $\mathcal U_2$ reduces to
\bdm
0=\delta\Phi(p(\del_r))\Phi(\del_r,X_2,X_3,X_4)+7\delta\Phi(X_2,X_3,X_4)
=\delta\Phi(-a^6r^6\phi(X_2,X_3,X_4)\phi+7X_2\wedge X_3\wedge X_4).
\edm
Since $a^6r^6\phi(X_2,X_3,X_4)\phi-7X_2\wedge X_3\wedge X_4$ spans 
$\Lambda^3(TM)$ we have $\delta \Phi=0$ on $TM$. 
For $X,Y,Z\in TM$ we have
\begin{align*}
0=\delta \Phi(X,Y,Z)=& -\sum_{i=1}^8(\bar\nabla^{\bar g}_{\bar e_i}\Phi)(\bar e_i,X,Y,Z)
=-\frac{1}{a^2r^2}\sum_{i=1}^7(\bar\nabla^{\bar g}_{e_i}\Phi)(e_i,X,Y,Z)\\
=&-a^2r^2\sum_{i=1}^7[(\nabla^g-\nabla)_{e_i}*\phi](e_i,X,Y,Z).
\end{align*}
For $X\in TM$ we have
\begin{align*}
\delta\Phi(\bar p(X))=
&\delta\Phi(\sum_{i<j<k=1}^8\bar 
g(\bar p(X),\bar e_i\wedge\bar e_j\wedge\bar e_k)\bar e_i\wedge\bar
e_j\wedge\bar e_k)
=\delta\Phi(\sum_{i<j<8}\bar g(\bar p(X),\bar e_i\wedge\bar e_j\wedge\bar e_8)
\bar e_i\wedge\bar e_j\wedge\bar e_8)\\
=&\sum_{i<j<8}\bar g(\bar p(X),\bar e_i\wedge\bar e_j\wedge\bar e_8)
\delta\Phi(\bar e_i,\bar e_j,\del_r)
=-\sum_{k=1}^7\sum_{i<j<8}(\bar\nabla^{\bar g}_{\bar e_k}\Phi)
(\bar e_k,\bar e_i,\bar e_j,\del_r)\bar g(X,\bar P(\bar e_i,\bar e_j,\del_r))\\
=&\sum_{k=1}^7\sum_{i<j<8}a^3r^3(\nabla^g_{\bar e_k}\phi)
(\bar e_k,\bar e_i,\bar e_j)\Phi(\bar e_i,\bar e_j,\del_r, X)
=a^6r^6\sum_{k=1}^7\sum_{i<j<8}(\nabla^g_{\bar e_k}\phi)
(\bar e_k,\bar e_i,\bar e_j)\phi(\bar e_i,\bar e_j, X)\\
=&-\sum_{i<j<8}\delta\phi(e_i,e_j)\phi(e_i,e_j,X).
\end{align*}
Suppose $W=\del_r$ and $X_1,..,X_4\in TM$. Then equation (\ref{eq:caseW2}) gives us
\bea[*]
0& =& \sum^4_{i=1}(-1)^{i+1}\delta\Phi(\bar p(X_i))a^3r^3\phi(X_1,..,\hat X_i,..,X_4)\\
&=& a^3r^3\sum^4_{i=1}\sum_{l<j<8}(-1)^i\delta\phi(e_l,e_j)\phi(e_l,e_j,X_i)
\phi(X_1,..,\hat X_i,..,X_4).
\eea[*]
For $W,X_i\in TM$, equation (\ref{eq:caseW2}) reduces to
\begin{align*}
28&(\bar\nabla^{\bar g}_W\Phi)(X_1,X_2,X_3,X_4)
=28a^4r^4[(\nabla^g-\nabla)_W*\phi](X_1,X_2,X_3,X_4),
\end{align*}
which is equal to
\begin{align*}
&=-\sum^4_{i=1}(-1)^{i+1}[\delta\Phi(\bar p(X_i))
\Phi(W,X_1,..,\hat X_i,..,X_4)+7\bar g(W,X_i)\delta\Phi(X_1,..,\hat X_i,..,X_4)]\\
=&a^4r^4\sum^4_{i=1}\sum_{l<j<8}(-1)^{i+1}\delta\phi(e_l,e_j)
\phi(e_l,e_j,X_i)*\phi(W,X_1,..,\hat X_i,..,X_4).
\end{align*}
This proves the statement.
%
%
%
\end{proof}
\begin{NB}
One can use Lemma \ref{lem:nablastarphi} and Lemma \ref{lam:barP} to 
simplify these equations in rather lengthly calculations. The property 
\bdm
0=\sum_{i=1}^7[(\nabla^g-\nabla)_{e_i}*\phi](e_i,X,Y,Z)
\edm
can for example be simplified to
\bdm
0=g((\phi\llcorner Y)\llcorner Z,\delta \phi\llcorner X)
+g(\phi\llcorner X,(\nabla^g\phi\llcorner Y)\llcorner Z)
-g(\phi\llcorner X,(*\phi\llcorner Y)\llcorner Z).
\edm
Another simplification (see Lemma \ref{lem:seccondlem}) will be 
used in the example.
\end{NB}
\begin{thm}\label{thm.whenU1}
If the  $\Spin(7)$ structure 
on the cone $\bar M$ is of class $\mathcal U_1$, then:
\begin{itemize}
 \item The $G_2$ structure $\phi$ on $M$ cannot be of class 
$\mathcal W_3\oplus \mathcal W_4$.
 \item The $G_2$ structure is of class $\mathcal W_1$ if and only if the $\Spin(7)$ structure is integrable.
\end{itemize}
If the structure on $\bar M$ is of class $\mathcal U_2$, then the structure on $M$ is never of class $\chi_1\oplus\chi_3$.
\end{thm}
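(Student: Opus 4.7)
The plan is to leverage the characterization of classes $\mathcal U_1$ and $\mathcal U_2$ from Lemma \ref{lem:spin7str}, paired with the standard fact that the scalar $g(\nabla^g \phi, *\phi)$ on a $G_2$ manifold vanishes precisely when the $\mathcal W_1$-component of its intrinsic torsion vanishes (i.e.\ exactly when the structure has no nearly parallel part). Both bullets for $\mathcal U_1$ and the $\mathcal U_2$-bullet hinge on this single scalar invariant.

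For the first bullet, suppose $\phi$ is of class $\mathcal W_3\oplus\mathcal W_4$. Then $g(\nabla^g\phi,*\phi)=0$, whereas Lemma \ref{lem:spin7str} forces, under the $\mathcal U_1$-assumption, the identity $g(\nabla^g\phi,*\phi)=a\,g(*\phi,*\phi)$. Since $a>0$ and $*\phi$ is nowhere vanishing, this is a contradiction. For the second bullet, assume first that $\phi$ is of class $\mathcal W_1$, so that $(\nabla^g_X\phi)(Y,Z,W)=b\,*\phi(X,Y,Z,W)$ for some function $b$. The first $\mathcal U_1$-equation then forces $b=a$, i.e.\ $\nabla^g\phi=a*\phi$. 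By Remark \ref{rem:eindeutnabla}, a metric connection with skew torsion satisfying this identity is unique, so $\nabla=\nabla^g$ and hence $T=\bar T=0$, $\bar\nabla=\nabla^{\bar g}$; this is exactly integrability of the $\Spin(7)$ structure on $\bar M$. Conversely, integrability gives $\bar T=T=0$, and Theorem \ref{thm:g2str} then yields $\nabla^g\phi=a*\phi$, putting $\phi$ in class $\mathcal W_1$.

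For the third bullet (which I read as the natural typo $\mathcal W_1\oplus\mathcal W_3$), the same philosophy applies: a structure on $M$ with a nonzero $\mathcal W_1$-component produces a nonzero scalar $g(\nabla^g\phi,*\phi)$, and the goal is to show this is incompatible with being of pure class $\mathcal U_2$. Concretely, I would contract the first $\mathcal U_2$-equation
\[
0\,=\,\sum_{i=1}^{7}[(\nabla^g-\nabla)_{e_i}*\phi](e_i,X,Y,Z)
\]
with $*\phi$ in the free indices $(X,Y,Z)$, use $\nabla\phi=a*\phi$ from Theorem \ref{thm:g2str} together with the relation of Lemma \ref{lem:nablastarphi} to rewrite $(\nabla^g-\nabla)*\phi$ as a combination of $\nabla^g\phi\cdot P(\cdot,\cdot)$ and $*\phi$-terms, and thereby collapse the triple sum to a fixed scalar multiple of $g(\nabla^g\phi,*\phi)$. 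Pure class $\mathcal U_2$ then forces $g(\nabla^g\phi,*\phi)=0$, killing the $\mathcal W_1$-component on $M$ and ruling out class $\mathcal W_1\oplus\mathcal W_3$.

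The main obstacle is the third bullet: one has to verify the precise scalar identity that identifies $g(\nabla^g\phi,*\phi)$ as the obstruction coming out of the contracted $\mathcal U_2$-equation, which requires careful bookkeeping with the identities of Lemmata \ref{lem:nablastarphi} and \ref{lam:barP}. The first two bullets, by contrast, reduce at once to the uniqueness statement of Remark \ref{rem:eindeutnabla} and the defining $\mathcal U_1$-equation of Lemma \ref{lem:spin7str}.
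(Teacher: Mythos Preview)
Your treatment of the first two bullets is correct and essentially matches the paper's argument: the $\mathcal U_1$-equation $g(\nabla^g\phi,*\phi)=a\,g(*\phi,*\phi)\neq 0$ is incompatible with $\mathcal W_3\oplus\mathcal W_4$ (which is characterized by the vanishing of that scalar), and under $\mathcal W_1$ it pins down the proportionality constant as $a$, after which uniqueness (Remark~\ref{rem:eindeutnabla}) forces $\nabla=\nabla^g$ and hence integrability.

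For the third bullet your strategy targets the wrong invariant and would not prove the claim even if the contraction you describe came out as hoped. You propose to show that pure class $\mathcal U_2$ forces $g(\nabla^g\phi,*\phi)=0$, i.e.\ kills the $\mathcal W_1$-component. But vanishing of the $\mathcal W_1$-component does \emph{not} rule out class $\mathcal W_1\oplus\mathcal W_3$: a structure of pure type $\mathcal W_3$ has zero $\mathcal W_1$-part yet lies in $\mathcal W_1\oplus\mathcal W_3$. The class $\mathcal W_1\oplus\mathcal W_3$ is the \emph{cocalibrated} class, characterized by $\delta\phi=0$, not by the scalar $g(\nabla^g\phi,*\phi)$.

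The paper's argument uses precisely this characterization. From Lemma~\ref{lem:spin7str}, class $\mathcal U_2$ already gives $\delta\Phi|_{TM}=0$; the missing piece is the $\del_r$-direction, and one computes directly (via Lemma~\ref{lam:barP}(5) and $\nabla\phi=a*\phi$)
\[
(\del_r\lrcorner\delta\Phi)(X,Y)\;=\;-\sum_{i=1}^{8}(\bar\nabla^{\bar g}_{\bar e_i}\Phi)(\bar e_i,\del_r,X,Y)\;=\;ar\sum_{i=1}^{7}[(\nabla^g-\nabla)_{e_i}\phi](e_i,X,Y)\;=\;-ar\,\delta\phi(X,Y).
\]
If $M$ were cocalibrated, the right-hand side vanishes, so $\delta\Phi=0$ identically on $\bar M$; by \cite{F85} this makes the $\Spin(7)$ structure integrable, contradicting strict class $\mathcal U_2$. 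Thus the obstruction lives in $\del_r\lrcorner\delta\Phi$ and is detected by $\delta\phi$, not by the scalar you contracted against.
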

\begin{proof}
Since the relation $g(\nabla^g\phi,*\phi)=0$ defines the class 
$\mathcal W_2\oplus \mathcal W_3\oplus \mathcal W_4$, we
conclude the first result  directly  from Lemma \ref{lem:spin7str}. Now,
assume the $G_2$ structure $\phi$ is of class $\mathcal W_1$, i.\,e.~nearly
parallel $G_2$ (see \cite{FG82}):
\bdm
\nabla^g\phi=\frac{1}{168}g(\nabla^g\phi,*\phi)*\phi.
\edm
Taking the scalar product with $*\phi$ on both sides leads to
\bdm
g(\nabla^g\phi,*\phi)=\frac{1}{168}g(\nabla^g\phi,*\phi)g(*\phi,*\phi).
\edm
With the $\Spin(7)$ structure being of class $\mathcal U_1$ and the 
calculation above we get $g(*\phi,*\phi)
=\frac{1}{168}g(*\phi,*\phi)g(*\phi,*\phi)$ and thus $g(*\phi,*\phi)=168$. 
Therefore,
\bdm
\nabla^g\phi=\frac{1}{168}g(\nabla^g\phi,*\phi)*\phi
=a\frac{1}{168}g(*\phi,*\phi)*\phi=a*\phi.
\edm
Thus $\nabla^g\phi=\nabla\phi=a*\phi$ and with Remark 
\ref{rem:eindeutnabla} we get $\nabla=\nabla^g$ and 
$\bar\nabla^{\bar g}=\bar \nabla$. Since $\bar\nabla\Phi=0$ the 
$\Spin(7)$ structure on $\bar M$ is integrable.\\
Consider a structure on $\bar M$ of class $\mathcal{U}_2$. With Lemma 
\ref{lem:spin7str} we get $\delta \Phi=0$ on $TM$. To see that this 
structure is integrable it is sufficient to show 
$\del_r\lrcorner\delta \Phi=0$, see \cite{F85}. We have for $X,Y\in TM$ 
\begin{align*}
(\del_r\lrcorner\delta \Phi)(X,Y)=&-\sum_{i=1}^8(\bar\nabla^{\bar g}_{\bar e_i}\Phi)(\bar e_i,\del_r,X,Y)=ar\sum_{i=1}^7((\nabla^g-\nabla)_{e_i}\phi)(e_i,X,Y)
=-ar\delta \phi(X,Y).
\end{align*}
This is equal to zero if the structure on $M$ is cocalibrated (of 
class $\chi_1\oplus \chi_3$, defined by $\delta\phi=0$).
\end{proof}
%
\subsection{Corresponding spinors on $G_2$ manifolds and their cones}
%
Since we have $T-T^c=-\frac{2a}{3}\phi$, the difference $\bar T-\overline{T^c}$ 
is the lift of $a^2r^2T-a^2r^2T^c=-\frac{2a}{3}a^2r^2\phi$. Furthermore, 
$\frac{1}{a^3r^3}\del_r\lrcorner\Phi$ is the lift of $\phi$ to $\bar M$, hence 
we have 
\bdm
\bar T-\overline{T^c}=-\frac{2}{3r}\del_r\lrcorner\Phi.
\edm
Now Lemma \ref{lem:corrksps} implies:
\begin{thm}\label{thm:g2killingspinors}
 For a $G_2T $ manifold with characteristic connection $\nabla^c$ and for 
$\alpha=\frac{1}{2}a$ or $\alpha=-\frac{1}{2}a$, there is
\begin{enumerate}
 \item a one to one correspondence between Killing spinors with torsion
\bdm
\nabla^s_X\psi=\alpha X\psi
\edm
on $M$, and parallel 
spinors of the connection $\bar\nabla^s+\frac{4s}{3r}\del_r\lrcorner\Phi$ on
$\bar M$ with cone constant $a$ 
\bdm
\bar\nabla^s_X\psi+\frac{2s}{3r}(X\lrcorner(\del_r\lrcorner\Phi))\psi=0.
\edm
\item a one to one correspondence between $\bar\nabla^s$-parallel 
spinors on $\bar M$ with cone constant $a$ and spinors on $M$ satisfying
\bdm
\nabla^s_X\psi=\alpha X\psi+\frac{2as}{3}(X\lrcorner\phi)\psi.
\edm
\end{enumerate}
In particular for $s=\frac{1}{4}$ we get the correspondence
\begin{center}
\begin{tabular}{|c|c|}
 \hline
spinors on $M$ & spinors on $\bar M$\\
\hline
$\nabla^c_X\psi=\alpha X\psi$ & $\bar\nabla_X\psi
=-\frac{1}{6r}(X\lrcorner(\del_r\lrcorner\Phi))\psi$\\
\hline
$\nabla^c_X\psi=\alpha X\psi+\frac{a}{6}(X\lrcorner \phi)~\psi$ & $\bar\nabla_X\psi=0$\\
\hline
\end{tabular}
\end{center}
\end{thm}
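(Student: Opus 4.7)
My plan is to recognize that Theorem \ref{thm:g2killingspinors} is a direct application of the abstract spinor correspondence of Lemma \ref{lem:corrksps} to the $G_2T$ setting, with the essential input being the torsion difference $T - T^c$ on $M$ and its lift $\bar T - \overline{T^c}$ on $\bar M$. All the hard geometric work has already been done: Theorem \ref{thm:g2str} guarantees that $\bar\nabla$ is the characteristic connection of the induced $\Spin(7)$ structure on the cone, and Lemma \ref{lem:corrksps} packages the spinor correspondence abstractly in terms of these torsion deviations. What remains is essentially a translation into the $G_2$/$\Spin(7)$ dictionary.

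First I would compute $T - T^c = -\tfrac{2a}{3}\phi$ directly from Definition \ref{def:nabg2}. Substituting this into the second row of the table in Lemma \ref{lem:corrksps} and moving the Clifford term to the right-hand side produces exactly the equation in part~(2) of the theorem,
\[
\nabla^s_X\psi \;=\; \alpha X \cdot \psi + \tfrac{2as}{3}(X\lrcorner \phi)\cdot\psi,
\]
which specializes at $s = 1/4$ to $\nabla^c_X\psi = \alpha X\psi + \tfrac{a}{6}(X\lrcorner \phi)\psi$.

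Next I would express $\bar T - \overline{T^c}$ in $\Spin(7)$-theoretic terms. By the cone construction of Section \ref{ch:constrcone}, the lift of any $3$-form $\omega$ on $M$ satisfies $\bar\omega(X,Y,Z) = a^2 r^2 \omega(X,Y,Z)$ for $X,Y,Z \perp \del_r$ and $\del_r \lrcorner \bar\omega = 0$; in particular $\bar T - \overline{T^c}$ is the lift of $-\tfrac{2a}{3}\, a^2 r^2 \phi$. On the other hand, the defining identity $\Phi(\del_r, X, Y, Z) = a^3 r^3 \phi(X,Y,Z)$ for $X,Y,Z \in TM$ together with $\del_r \lrcorner (\del_r \lrcorner \Phi) = 0$ shows that $\del_r \lrcorner \Phi$ equals the lift of $a^3 r^3\phi$. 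Combining these gives $\bar T - \overline{T^c} = -\tfrac{2}{3r}\, \del_r \lrcorner \Phi$, and substitution into the first row of the table of Lemma \ref{lem:corrksps} yields the equation in part~(1); the $s = 1/4$ specialization is then immediate.

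I do not anticipate any real obstacle: every ingredient is already in place. The Killing-number assignment $\alpha = \pm\tfrac{1}{2}a$ is dictated by Lemma \ref{lem:ks}, and because $\dim M = 7$ is odd, a $\bar\nabla^s$-parallel spinor on $\bar M$ corresponds to a single Killing spinor with torsion on $M$ of either sign of $\alpha$, consistent with the statement. The only minor care needed is to verify on each row of the table that the sign conventions of $X \lrcorner \Phi$ and $\del_r \lrcorner \Phi$ are consistently propagated, but this is routine.
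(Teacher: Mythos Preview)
Your proposal is correct and follows essentially the same approach as the paper: the paper likewise computes $T-T^c=-\tfrac{2a}{3}\phi$ from Definition~\ref{def:nabg2}, identifies $\bar T-\overline{T^c}$ as the lift of $-\tfrac{2a}{3}a^2r^2\phi$, recognizes this lift as $-\tfrac{2}{3r}\,\del_r\lrcorner\Phi$ via the definition of $\Phi$, and then invokes Lemma~\ref{lem:corrksps} directly.
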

\begin{NB}
As for metric almost contact structures (see Remark \ref{rem:chartorausschr}),
 one can use the characterisation 
$\bar T=-\delta\Phi-\frac{7}{6}*(\theta\wedge\Phi)$ with 
$\theta=\frac{1}{7}*(\delta\Phi\wedge\Phi)$ (see \cite{I04}) and the 
description of $T^c$ given in Theorem 4.8 of \cite{FI02} to rewrite these
equations in terms of the geometric data of the $\Spin(7)$ structure.
\end{NB}
Theorem \ref{thm:g2killingspinors} states, as before, the general
correspondence between spinors on the base and spinors on the cone.
However, $G_2T$ manifolds, i.\,e.~carrying a  characteristic connection $\nabla^c$, 
 enjoy a further, very special property:
The $G_2$ structure $\phi$ induces a unique spinor field $\psi$
of length one and this spinor field is $\nabla^c$-parallel, $\nabla^c \psi=0$.
This is due to the fact that $G_2$ is the stabilizer of a generic spinor in 
$\Delta_7$, the spin representation in dimension $7$.
For a nearly parallel $G_2$ manifold, it is well-known that $\psi$ is just
the Riemannian Killing spinor (see  \cite{FI02}, \cite{Friedrich&I3},
\cite{FK}, \cite{FKMS} for all these results). Thus, $\psi$ induces in this
case the $\nabla^g$-parallel spinor of the integrable $\Spin(7)$ structure on
the cone. We prove that this result carries over to all admissible
$G_2$ manifolds.
\begin{cor}\label{cor:g2explkilling}
 Let $(M,g,\phi)$ be a $G_2T$ manifold with  characteristic connection 
$\nabla^c$, $\psi$ the $\nabla^c$-parallel spinor field defined by $\phi$.
Then $\psi$ satisfies 
\bdm
\nabla^c_X\psi \ =\ - \frac{a}{2}X\psi +\frac{a}{6}(X\lrcorner\phi)\psi
\edm
for every  $a>0$ and induces a $\bar\nabla$-parallel spinor on the cone $\bar
M$, constructed with cone constant $a$ and endowed with its induced 
$\Spin(7)$ structure.
\end{cor}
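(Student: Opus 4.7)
The plan is to reduce the whole statement to a single algebraic identity for the canonical $G_2$-spinor, and then to invoke Theorem \ref{thm:g2killingspinors}\,(2) to pass to the cone. Since by hypothesis $\nabla^c\psi=0$, the displayed equation
\bdm
\nabla^c_X\psi\ =\ -\tfrac{a}{2}X\cdot\psi+\tfrac{a}{6}(X\lrcorner\phi)\cdot\psi
\edm
is equivalent to the pointwise relation $(X\lrcorner\phi)\cdot\psi=3\,X\cdot\psi$ for every $X\in TM$, and this relation does not depend on $a$. This already explains why the equation must hold for \emph{every} $a>0$: the cone constant only enters later, through the correspondence on the cone, not through algebra on $M$.

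The heart of the proof is thus the pointwise identity $(X\lrcorner\phi)\cdot\psi=3\,X\cdot\psi$, which is classical for the $G_2$-invariant unit spinor. I would argue representation-theoretically. The stabilizer of $\psi$ in $\Spin(7)$ being $G_2$, the spin module decomposes $G_2$-equivariantly as $\Delta_7=\R\psi\oplus V$, with $V$ the standard $7$-dimensional $G_2$-representation, and Clifford multiplication gives the $G_2$-equivariant isomorphism $T_pM\to V$, $X\mapsto X\cdot\psi$. Since $\phi$ is $G_2$-invariant, multiplication by $\phi$ is a $G_2$-equivariant endomorphism of $\Delta_7$, so by Schur's lemma there are scalars with $\phi\cdot\psi=c_1\psi$ and $\phi\cdot v=c_2 v$ for $v\in V$. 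The basic Clifford identity $X\cdot\phi+\phi\cdot X=-2\,X\lrcorner\phi$ (valid for any $3$-form, since $(-1)^3=-1$) applied to $\psi$ then yields
\bdm
(X\lrcorner\phi)\cdot\psi\ =\ -\tfrac{c_1+c_2}{2}\,X\cdot\psi.
\edm
The two constants are pinned down by tracelessness of Clifford multiplication by $\phi$ on $\Delta_7$ (which forces $c_1+7c_2=0$) together with the standard evaluation $c_1=-7$, obtainable in one line from the explicit normal form of $\phi$. This gives $c_2=1$ and the desired identity $(X\lrcorner\phi)\cdot\psi=3\,X\cdot\psi$.

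Once the identity is established, substituting it into the right-hand side of the displayed equation gives $-\tfrac{a}{2}X\cdot\psi+\tfrac{a}{2}X\cdot\psi=0=\nabla^c_X\psi$, so the equation holds for every $a>0$. This equation is precisely the one appearing in the second row of the $s=\tfrac{1}{4}$ table in Theorem \ref{thm:g2killingspinors}, read with Killing number $\alpha=-\tfrac{a}{2}$, and the sign convention $a=2|\alpha|$ for the cone constant is automatically consistent. That theorem then produces the desired $\bar\nabla$-parallel spinor on the cone $\bar M$ built with cone constant $a$ and endowed with its induced $\Spin(7)$ structure.

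The main obstacle is exclusively the algebraic identity of the second paragraph: although classical, one must carefully reconcile the sign and orientation conventions used for $\phi$, the Hodge star, and Clifford multiplication, especially since the normal form of $\phi$ displayed earlier in the paper is non-standard; everything else reduces to plugging $\nabla^c\psi=0$ into the cone correspondence of the preceding subsection.
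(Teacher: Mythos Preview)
Your proof is correct and follows essentially the same strategy as the paper: reduce to the pointwise algebraic identity $(X\lrcorner\phi)\cdot\psi=3\,X\cdot\psi$, justify the existence of such a constant by $G_2$-representation theory, and then invoke Theorem~\ref{thm:g2killingspinors}. The only minor difference is that the paper argues the proportionality directly from the $G_2$-isomorphism $\R^7\cong\Lambda^2_7=\{X\lrcorner\phi\}$ and leaves the value of the constant to an explicit check in some realization, whereas you give a more self-contained computation via the decomposition $\Delta_7=\R\psi\oplus V$, Schur's lemma for the action of $\phi$, the Clifford relation $X\cdot\phi+\phi\cdot X=-2\,X\lrcorner\phi$, and the tracelessness of Clifford multiplication by $\phi$; this is a nice refinement but not a genuinely different route.
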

\begin{proof}
The crucial observation is  the algebraic identity
\bdm
(X\lrcorner\phi)\cdot\psi\ =\ 3\, X\cdot \psi 
\edm
that holds for all vector fields $X$. Since The $7$-dimensional standard
representation $\R^7$ of $G_2$ is isomorphic to the $G_2$ representation
\bdm
\Lambda^2_7 \ =\ \{X\lrcorner \phi\, |\, X\in\R^7\}\, \subset\,
\Lambda^2(\R^7)
\ =\ \so(7) = \Lambda^2_7 \oplus \g_2,
\edm
it is clear that there exists a constant $c$ 
s.\,t.~$(X\lrcorner\phi)\cdot\psi = c X\cdot \psi$; 
one then computes its explicit value in
any realization of the spin representation. Thus, the equation for $\psi$
follows and we can apply  Theorem \ref{thm:g2killingspinors}.
\end{proof}
Be cautious that $\nabla^c$ may have more parallel spinor fields than
just $\psi$; for these, we cannot define a suitable `lifted' spinor on the
cone, unless one finds a similar trick to write the spinor field equation
in a form covered by Theorem \ref{thm:g2killingspinors}.
\begin{NB}
In Theorem 1.1 of \cite{I04} S.~Ivanov proves that any $\Spin(7)$ manifold
admits a spinor field that is parallel with respect to the characteristic 
connection. Corollary \ref{cor:g2explkilling} gives an explicit construction
of this spinor in case the $\Spin(7)$ manifold is the cone of an admissible 
$G_2$ manifold.
\end{NB}
\begin{NB}
Since Corollary \ref{cor:g2explkilling} holds for any $G_2T$ manifold,
one could also carry out the whole study without  using
the $3$-form $\phi$ and the $4$-form $\Phi$: the spinor field $\psi$
describes the $G_2$ structure completely, then one considers the 
induced $\bar\nabla$-parallel spinor $\varphi$ on the cone described in  
Corollary \ref{cor:g2explkilling} and establishes the correspondence
between the $G_2$ classes and the $\Spin(7)$ classes by studying the
equations satisfied by $\psi$ and $\varphi$.
\end{NB}
%
\subsection{Examples}
%
To simplify the calculations in the example we reformulate the second 
condition for a $G_2$ structure on $M$ to imply a $\Spin(7)$ structure 
of class $\mathcal U_1$ on $\bar M$ of Lemma \ref{lem:spin7str}. So we only 
have to calculate $\phi$, $*\phi$ and $\nabla^g\phi$ to check the conditions.
We omit the proof of the following result, it is a lengthy, but
straight forward continuation of the calculations in the proof of 
Lemma \ref{lem:spin7str} and Lemma \ref{lem:nablastarphi}.
\begin{lem}\label{lem:seccondlem}
The second condition of Lemma $\ref{lem:spin7str}$  
\bdm
g(*\phi,[(\nabla-\nabla^g)*\phi]\llcorner X) \ =\ 
3g(\phi,[(\nabla-\nabla^g)\phi]\llcorner X)
\edm
is equivalent to

\begin{align*}
0=&\!\!  \sum^7_{i,k,j,l,m=1} 
\bigg[*\phi(e_i,e_j,e_k,e_l)(\nabla^g_{e_i}\phi)(e_j,e_k,e_m)
\phi(e_m,e_l,X)+
*\phi(e_i,e_j,e_k,e_l)(\nabla^g_{e_i}\phi)(e_l,X,e_m)
\phi(e_m,e_j,e_k) \\
&-*\phi(e_i,e_j,e_k,e_l)*\phi(e_i,e_j,e_k,e_m)\phi(e_m,e_l,X)
-*\phi(e_i,e_l,e_j,e_k)*\phi(e_i,e_l,X,e_m)\phi(e_m,e_j,e_k)\bigg]\\
&+3\sum^7_{i,k,j=1}\bigg[ -\phi(e_i,e_j,e_k)(\nabla^{g}_{e_i}\phi)(e_j,e_k,X)
+a\, \phi(e_i,e_j,e_k)*\phi(e_i,e_j,e_k,X)\bigg].
\end{align*}
\end{lem}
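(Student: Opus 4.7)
The plan is a direct expansion of both sides of the claimed equivalence in a local orthonormal frame $e_1,\ldots,e_7$. Since every ingredient appearing in the conclusion is built from $\phi$, $*\phi$, and $\nabla^g\phi$ alone, the real task is to eliminate the auxiliary connection $\nabla$ from the second condition and exhibit what remains as the displayed quintuple and triple sums.

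The first step is to apply Lemma \ref{lem:nablastarphi} to \emph{both} $\nabla$ and $\nabla^g$: each is a metric connection with skew torsion (the former by hypothesis, the latter with zero torsion), so the first identity of that lemma applies to each. Subtracting gives
\bdm
[(\nabla-\nabla^g)_Z*\phi](V,W,X,Y) \ =\ [(\nabla-\nabla^g)_Z\phi](V,W,P(X,Y)) + [(\nabla-\nabla^g)_Z\phi](X,Y,P(V,W)),
\edm
and combining this with $\nabla\phi = a*\phi$ from Theorem \ref{thm:g2str} yields
\bdm
[(\nabla-\nabla^g)_Z\phi](V,W,U) \ =\ a*\phi(Z,V,W,U) - (\nabla^g_Z\phi)(V,W,U).
\edm

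The second step is to substitute these into $g(*\phi,[(\nabla-\nabla^g)*\phi]\llcorner X)=\sum_{i,j,k,l}*\phi(e_i,e_j,e_k,e_l)[(\nabla-\nabla^g)_{e_i}*\phi](e_j,e_k,e_l,X)$ and to expand $P$ via Lemma \ref{lam:barP}(1), namely $P(Y,Z)=\sum_m\phi(e_m,Y,Z)e_m$. This produces precisely the four quintuple-sum contributions of the lemma: the two $(\nabla^g\phi)\cdot\phi$ summands come from the $\nabla^g\phi$ half of $(\nabla-\nabla^g)\phi$, while the two $*\phi\cdot*\phi\cdot\phi$ summands come from the $a*\phi$ half. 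Simultaneously, $3g(\phi,[(\nabla-\nabla^g)\phi]\llcorner X)$ expands immediately into $3\sum\phi(e_i,e_j,e_k)[a*\phi(e_i,e_j,e_k,X)-(\nabla^g_{e_i}\phi)(e_j,e_k,X)]$, which is exactly the triple sum of the lemma. Adding the triple sum with the correct sign to the quintuple sum recovers the stated expression, up to a final cosmetic check of the coefficient of the two $*\phi\cdot*\phi\cdot\phi$ terms.

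The third step is precisely that cosmetic check: the direct expansion carries an extra factor of $a$ in front of the two $*\phi\cdot*\phi\cdot\phi$ summands, which must be reconciled with the coefficient $-1$ in the lemma. This is achieved by observing that those summands in fact vanish identically by $G_2$-algebra. Indeed, by $G_2$-invariance the partial contraction $\sum_{i,j,k}*\phi(e_i,e_j,e_k,e_l)*\phi(e_i,e_j,e_k,e_m)$ is a $G_2$-invariant symmetric $2$-tensor, hence a multiple of $\delta_{lm}$; its further pairing with $\phi(e_m,e_l,X)$ collapses to $\sum_l\phi(e_l,e_l,X)=0$ by antisymmetry of $\phi$. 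An analogous argument, using Lemma \ref{lem:starphi} to evaluate the double contraction $\sum_{i,l}*\phi(e_i,e_l,\cdot,\cdot)*\phi(e_i,e_l,\cdot,\cdot)$ as a combination of $g\otimes g$, $\phi\cdot\phi$, and $*\phi$, shows that the second $*\phi\cdot*\phi\cdot\phi$ summand also vanishes after contraction with $\phi(e_m,e_j,e_k)$. Hence the numerical coefficient multiplying these two terms is immaterial, and the identity takes the form stated.

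The main obstacle is purely combinatorial: bookkeeping of the eight-fold index matching, the antisymmetries of $\phi$ and $*\phi$, and the signs introduced when $P$ is expanded in the frame. The only non-formal ingredient is the $G_2$-algebraic vanishing of the two auxiliary $*\phi\cdot*\phi\cdot\phi$ contractions, which rests on the irreducibility of the standard $G_2$-representation on $\R^7$ and on Lemma \ref{lem:starphi}; beyond that, the proof is a tedious but mechanical reshuffling of indices.
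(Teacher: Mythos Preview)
Your approach is essentially the paper's own: the omitted proof is described as a ``straightforward continuation of the calculations in the proof of Lemma~\ref{lem:spin7str} and Lemma~\ref{lem:nablastarphi}'', and the suppressed appendix carries out exactly your Steps~1--2 (apply Lemma~\ref{lem:nablastarphi} to each connection, use $\nabla\phi=a*\phi$, then expand $P$ via Lemma~\ref{lam:barP}(1)). Your bookkeeping of signs, matching the main-body statement by forming $\text{RHS}-\text{LHS}$, is correct.

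Your Step~3 is a genuine addition. The direct expansion does produce a coefficient $a$ (not $1$) on the two $*\phi\cdot *\phi\cdot\phi$ summands, and you rightly resolve this by showing those two summands vanish identically. Your argument for the first (triple contraction yields a $G_2$-invariant symmetric $2$-tensor, hence $c\,\delta_{lm}$, which pairs to zero against the antisymmetric $\phi(e_m,e_l,X)$) is fine but can be replaced by a uniform one-line argument that also covers the second summand: each of the two expressions is a linear functional in $X$ built solely from the $G_2$-invariant tensors $\phi,*\phi$ and frame contractions, hence is a $G_2$-invariant element of $(\R^7)^*$; since $\R^7$ is an irreducible nontrivial $G_2$-module, Schur's lemma forces it to vanish. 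This renders your appeal to Lemma~\ref{lem:starphi} for the second term unnecessary and closes the argument cleanly.
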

\begin{exa}\label{ex:sasakig2}
 Let $(M,\xi_1,\xi_2,\xi_3,\eta_1,\eta_2,\eta_3)$ be a $7$ dimensional 
$3$-Sasaki manifold with corresponding $2$-forms $F_i$, $i=1,2,3$. Let 
$\eta_i$ for $i=1,..,7$ be the dual of a local basis 
$\{e_1=\xi_1,e_2=\xi_2,e_3=\xi_3,e_4,..,e_7\}$, such that 
\bdm
F_1=-\eta_{23}-\eta_{45}-\eta_{67},~~~F_2=\eta_{13}-\eta_{46}
+\eta_{57},~~~F_3=-\eta_{13}-\eta_{47}-\eta_{56}.
\edm
Here for $\eta_i\wedge..\wedge\eta_j$ we write $\eta_{i,..,j}$.  
In \cite{AF10} it is explained  that there is 
no characteristic connection as such, but one can construct a cocalibrated
$G_2$ structure 
\bdm
\phi=\eta_1\wedge F_1+\eta_2\wedge F_2+\eta_3\wedge F_3
+4\eta_1\wedge\eta_2\wedge\eta_3=\eta_{123}-\eta_{145}-\eta_{167}
-\eta_{246}+\eta_{257}-\eta_{347}-\eta_{356}
\edm
with characteristic connection $\nabla^c$ and torsion 
$T^c=\eta_1\wedge d\eta_1+\eta_2\wedge d\eta_2+\eta_3\wedge d\eta_3$
that is very well adapted to the $3$-Sasakian structure. 
It is therefore called the \emph{canonical $G_2$ structure} of the underlying
$3$-Sasakian structure. Corollary \ref{cor:g2explkilling} ensures then
the existence of a $\bar\nabla$-parallel spinor field on $\bar M$.

 We calculate the class of the $\Spin(7)$ structure on $\bar M$
of the canonical $G_2$ structure  using  Lemma \ref{lem:spin7str}. 
\begin{thm}
The $\Spin(7)$ structure on the  cone constructed from the canonical 
$G_2$ structure of  a $3$-Sasakian manifold is of class $\mathcal U_1$
if and only if the cone constant is $a=\frac{15}{14}$.
\end{thm}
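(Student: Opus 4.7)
The strategy is to evaluate, for the canonical $G_2$ form $\phi = \eta_1\wedge F_1+\eta_2\wedge F_2+\eta_3\wedge F_3+4\,\eta_{123}$ on the $3$-Sasakian manifold $M$, the two conditions of Lemma \ref{lem:spin7str} that characterize when the induced $\Spin(7)$ structure on $\bar M$ belongs to $\mathcal{U}_1$, using the reformulation of Lemma \ref{lem:seccondlem} for the second. Since all quantities appearing in these conditions, namely $\phi$, $*\phi$, $\nabla^g\phi$ and the difference $(\nabla-\nabla^g)\phi = a\,{*\phi} - \nabla^g\phi$ (which is available for free from Theorem \ref{thm:g2str}), can be expressed explicitly in terms of the Sasakian data, the problem reduces to a finite tensorial computation.

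First, I would write out $*\phi$ component-wise in the local basis $\eta_1,\dots,\eta_7$ by Hodge-dualising each of the seven summands of $\phi$. Next, using the Sasakian identities
\bdm
\nabla^g_X\xi_i=-\phi_i(X),\qquad (\nabla^g_XF_i)(Y,Z)=g(X,Y)\eta_i(Z)-g(X,Z)\eta_i(Y),\qquad i=1,2,3,
\edm
together with the commutation relations (\ref{eq:hkc1})--(\ref{eq:hkc2}) among $\phi_1,\phi_2,\phi_3$ from Theorem \ref{thm:3aks}, I would compute $\nabla^g(\eta_i\wedge F_i)$ and $\nabla^g(\eta_1\wedge\eta_2\wedge\eta_3)$ by the product rule, and thereby obtain a closed-form expression for $\nabla^g\phi$ in the frame $e_1,\dots,e_7$.

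With $*\phi$ and $\nabla^g\phi$ in hand, condition (1) of Lemma \ref{lem:spin7str} reads $g(\nabla^g\phi,*\phi)=a\,g(*\phi,*\phi)$. Since $*\phi$ has seven summands of unit coefficient and the extended metric on $4$-forms carries a factor $4!$, one has $g(*\phi,*\phi)=168$, as already observed in the proof of Theorem \ref{thm.whenU1}. Evaluating the left-hand side by contracting against the seven terms of $*\phi$ produces a definite rational number; if the claim is correct, it equals $180$, forcing $a=\tfrac{15}{14}$. For condition (2) I would exploit the fact that $\phi$ is $\SO(3)$-symmetric with respect to the three Sasakian structures and that the stabiliser of $\phi$ inside $\SO(7)$ is $G_2$: it then suffices to verify Lemma \ref{lem:seccondlem} for one representative vertical direction $X=\xi_1$ and one representative horizontal direction $X=e_4$, the remaining cases following by symmetry. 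Both should collapse to the same scalar equation already found from condition (1), so no new constraint on $a$ appears.

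\textbf{Main obstacle.} The book-keeping is unpleasant: $\nabla^g\phi$ has a large number of component terms (because $\phi$ mixes vertical and horizontal parts asymmetrically and the Sasakian formulas intertwine all three $\phi_i$), and the quadruple sum in Lemma \ref{lem:seccondlem} requires the correct signs coming from both the Hodge dual and the quaternionic identities $\phi_i\phi_j=\pm\phi_k+\eta_i\otimes\xi_j$. The genuinely non-routine point, rather than the arithmetic, is that condition (2) of Lemma \ref{lem:spin7str} is a priori one scalar equation \emph{per} direction $X\in TM$, and one must check that all of these reduce to the single scalar constraint imposed by condition (1). The $3$-Sasakian $\SO(3)$-symmetry, together with $G_2$-equivariance of both sides of the identity, is precisely what makes this reduction possible, and carrying out the verification in a symmetry-adapted frame is the key simplification that renders the proof manageable.
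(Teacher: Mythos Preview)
Your approach is essentially the paper's: compute $*\phi$ and $\nabla^g\phi$ explicitly from the Sasakian identities, then feed them into the two conditions of Lemma~\ref{lem:spin7str}, using Lemma~\ref{lem:seccondlem} for the second. The paper does exactly this, obtaining $g(*\phi,*\phi)=168$ and $g(\nabla^g\phi,*\phi)=180$, hence $a=\tfrac{15}{14}$ from the first condition.

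There is one point where your expectation diverges from what actually happens. You anticipate that the second condition, evaluated for each $X$, will ``collapse to the same scalar equation'' as the first. In fact the paper finds that the second condition of Lemma~\ref{lem:spin7str} (in the form of Lemma~\ref{lem:seccondlem}) is satisfied \emph{for every} $a$, i.e.\ it is an identity in this example and imposes no constraint whatsoever; the value $a=\tfrac{15}{14}$ comes solely from the first condition. So your prediction about the mechanism is off, though the final answer is unaffected. Correspondingly, the paper does not invoke the $\SO(3)$/$G_2$-symmetry reduction you sketch; it simply verifies the identity of Lemma~\ref{lem:seccondlem} for all $X$ by direct (computer-assisted) calculation once $\phi$, $*\phi$ and the seven tensors $\nabla^g_{e_j}\phi$ are written out. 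Your symmetry argument is a reasonable alternative to brute force, but as stated it would need more care: the $G_2$-stabiliser of $\phi$ does not act on $M$, and what you really need is that the algebraic expression in Lemma~\ref{lem:seccondlem} is equivariant under the pointwise symmetries of the triple $(\phi,*\phi,\nabla^g\phi)$ in the adapted frame, which requires checking how the explicit $\nabla^g_{e_j}\phi$ transform under the $3$-Sasakian $\Sp(1)$-action on both vertical and horizontal directions.
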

\begin{proof}
Due to the formulation of the second condition of Lemma 
\ref{lem:spin7str} given in Lemma \ref{lem:seccondlem}, we just need to 
calculate $*\phi$ and $\nabla^g\phi$. Obviously $*\phi$ is given by
\bdm
*\phi=\eta_{4567}-\eta_{2367}-\eta_{2345}-\eta_{1357}+\eta_{1346}
-\eta_{1256}-\eta_{1247}.
\edm
To get $\nabla^g\phi$ we observe
\begin{align*}
\nabla^g_{e_j}\phi=\ &(\nabla^g_{e_j}\eta_1)\wedge F_1
+(\nabla^g_{e_j}\eta_2)\wedge F_2+(\nabla^g_{e_j}\eta_3)\wedge F_3\\
&+\eta_1\wedge(\nabla^g_{e_j}F_1)+\eta_2\wedge(\nabla^g_{e_j}F_2)
+\eta_3\wedge(\nabla^g_{e_j}F_3)\\
&+4(\nabla^g_{e_j}\eta_1)\wedge\eta_2\wedge\eta_3
+4\eta_1\wedge(\nabla^g_{e_j}\eta_2)\wedge\eta_3
+4\eta_1\wedge\eta_2\wedge(\nabla^g_{e_j}\eta_3)
\end{align*}
and since $(\eta_i,F_i)$ are Sasakian structures we have 
$(\nabla^g_{e_j}F_i)(Y,Z)=g(e_j,Z)\eta_i(Y)-g(e_j,Y)\eta_i(Z)$. Thus 
$\nabla^g_{e_j}F_i=\eta_j\wedge\eta_i$ for $i=1,2,3$ and $j=1,..,7$ 
implying $\eta_i\wedge(\nabla^g_{e_j}F_i)=0$. Since 
\bdm
(\nabla^g_X\eta_i)Y=g(Y,\nabla^g_X\xi_i)=g(Y,-\phi_iX)=F_i(X,Y)
\edm
we have $\nabla^g_X\eta_i=X\lrcorner F_i$ and get
\begin{align*}
\nabla^g_{e_j}\phi=&(e_j\lrcorner F_1)\wedge F_1
+(e_j\lrcorner F_2)\wedge F_2+(e_j\lrcorner F_3)\wedge F_3\\
&+4(e_j\lrcorner F_1)\wedge\eta_2\wedge\eta_3
+4\eta_1\wedge(e_j\lrcorner F_2)\wedge\eta_3+4\eta_1\wedge\eta_2\wedge(e_j\lrcorner F_3).
\end{align*}
This gives us
\begin{alignat*}{2}
 \nabla^g_{e_1}\phi\ =\ &-\eta_{346}+\eta_{357}+\eta_{247}+\eta_{256},& \quad
 \nabla^g_{e_2}\phi\ =\ &\eta_{345}+\eta_{367}-\eta_{147}-\eta_{156},\\
 \nabla^g_{e_3}\phi\ =\ &-\eta_{245}-\eta_{267}+\eta_{146}-\eta_{157},& \quad
 \nabla^g_{e_4}\phi\ =\ &3\,(-\eta_{235}+\eta_{567}+\eta_{136}-\eta_{127}),\\
 \nabla^g_{e_5}\phi\ =\ &3\,(\eta_{234}-\eta_{467}-\eta_{137}-\eta_{126}),& \quad
 \nabla^g_{e_6}\phi\ =\ &3\,(-\eta_{237}+\eta_{457}-\eta_{134}+\eta_{125}),\\
 \nabla^g_{e_7}\phi\ =\ &3\,(\eta_{236}-\eta_{456}+\eta_{135}+\eta_{124}). & &
\end{alignat*}
Using an appropriate computer algebra system we easily calculate
\bdm
g(\nabla^g\phi,*\phi)=180,~~~ g(*\phi,*\phi)=168,
\edm
thus the first condition of Lemma \ref{lem:spin7str} is satisfied if 
$a=\frac{15}{14}$. Using the formulation given in Lemma \ref{lem:seccondlem} 
of the second condition one easily checks that the this condition is satisfied for any $a$.
\end{proof}
\end{exa}
We expect that for all other values of the cone constant $a$, the structure
is of generic class $\mathcal U_1\oplus U_2$, but the system of equations that
one obtains is extremely involved. 
%
%
%
%
\Kommentar{
\appendix\section{A reformulation of the second condition of Lemma \ref{lem:spin7str}}
For detailed calculations we reformulate the second condition of 
Lemma \ref{lem:spin7str}.
\begin{lem}\label{lem:seccondlem}
The second condition of Lemma \ref{lem:spin7str}  
\bdm
g(*\phi,[(\nabla-\nabla^g)*\phi]\llcorner X)=3g(\phi,[(\nabla-\nabla^g)\phi]\llcorner X)
\edm
is equivalent to
\begin{align*}
0=&-\sum^7_{i,k,j,l,m=1}*\phi(e_i,e_j,e_k,e_l)(\nabla^g_{e_i}\phi)
(e_j,e_k,e_m)\phi(e_m,e_l,X)\\
&-\sum^7_{i,k,j,l,m=1}*\phi(e_i,e_j,e_k,e_l)(\nabla^g_{e_i}\phi)
(e_l,X,e_m)\phi(e_m,e_j,e_k)\\
&+\sum^7_{i,k,j,l,m=1}*\phi(e_i,e_j,e_k,e_l)*\phi(e_i,e_j,e_k,e_m)\phi(e_m,e_l,X)\\
&+\sum^7_{i,k,j,l,m=1}*\phi(e_i,e_l,e_j,e_k)*\phi(e_i,e_l,X,e_m)\phi(e_m,e_j,e_k)\\
&-3\sum^7_{i,k,j=1}\phi(e_i,e_j,e_k)(\nabla^{g}_{e_i}\phi)(e_j,e_k,X)
+3a\sum^7_{i,k,j=1}\phi(e_i,e_j,e_k)*\phi(e_i,e_j,e_k,X).
\end{align*}
\end{lem}
\begin{proof}
We continue the calculation of the proof of Lemma \ref{lem:spin7str} and 
with Lemma \ref{lem:nablastarphi} we get
\begin{align*}
0=&\frac{1}{a}\sum^7_{i,k,j,l=1}(\nabla^g_{e_i}*\phi)(e_j,e_k,a*\phi(e_i,e_l,e_j,e_k)e_l,X)
-\frac{1}{a}\sum^7_{i,k,j,l=1}(\nabla_{e_i}*\phi)(e_j,e_k,a*\phi(e_i,e_l,e_j,e_k)e_l,X)\\
&-3\sum^7_{i,k,j=1}\phi(e_i,e_j,e_k)(\nabla^{g}_{e_i}\phi)(e_j,e_k,X)
+3a\sum^7_{i,k,j=1}\phi(e_i,e_j,e_k)*\phi(e_i,e_j,e_k,X)\\
=&\sum^7_{i,k,j,l=1}*\phi(e_i,e_l,e_j,e_k)(\nabla^g_{e_i}*\phi)(e_j,e_k,e_l,X)
-\sum^7_{i,k,j,l=1}*\phi(e_i,e_l,e_j,e_k)(\nabla_{e_i}*\phi)(e_j,e_k,e_l,X)\\
&-3\sum^7_{i,k,j=1}\phi(e_i,e_j,e_k)(\nabla^{g}_{e_i}\phi)(e_j,e_k,X)
+3a\sum^7_{i,k,j=1}\phi(e_i,e_j,e_k)*\phi(e_i,e_j,e_k,X)\\
=-&\sum^7_{i,k,j,l=1}*\phi(e_i,e_l,e_j,e_k)(\nabla^g_{e_i}\phi)(e_j,e_k,P(e_l,X))
-\sum^7_{i,k,j,l=1}*\phi(e_i,e_l,e_j,e_k)(\nabla^g_{e_i}\phi)(e_l,X,P(e_j,e_k))\\
+&\sum^7_{i,k,j,l=1}*\phi(e_i,e_l,e_j,e_k)(\nabla_{e_i}\phi)(e_j,e_k,P(e_l,X))
+\sum^7_{i,k,j,l=1}*\phi(e_i,e_l,e_j,e_k)(\nabla_{e_i}\phi)(e_l,X,P(e_j,e_k))\\
&-3\sum^7_{i,k,j=1}\phi(e_i,e_j,e_k)(\nabla^{g}_{e_i}\phi)(e_j,e_k,X)
+3a\sum^7_{i,k,j=1}\phi(e_i,e_j,e_k)*\phi(e_i,e_j,e_k,X)
\end{align*}
which is equal to the condtion statet in the lemma.
\end{proof}
Thus one just needs to calculate $\phi$, $*\phi$ and $\nabla^g\phi$ to check 
this condition.
}

    
\vspace{2cm}
\end{document}